\numberwithin{equation}{section}  
\newtheorem{Thm}{Theorem}
\newtheorem{conj}{Conjecture}
\newtheorem{lem}{Lemma}
\newcommand{\p}{\partial}
\newtheorem{prop}{Proposition}
\newcommand{\C}{\mathbb{C}}
\newcommand{\Z}{\mathbb{Z}}
\newcommand{\Q}{\mathbb{Q}}
\DeclareMathAlphabet{\curly}{U}{rsfs}{m}{n}
\newcommand{\li}{{\rm li}}
\newcommand{\be}{\begin{equation}}
\newcommand{\ee}{\end{equation}}
\newcommand{\benn}{\begin{equation*}}
\newcommand{\eenn}{\end{equation*}}
\newcommand{\bal}{\begin{align*}}
\newcommand{\ea}{\end{align*}}
\newcommand{\eal}{\ensuremath{\end{align*}}}
\newcommand{\bea}{\begin{eqnarray}}
\newcommand{\eea}{\end{eqnarray}}
\newcommand{\lam}{\ensuremath{\lambda}}
\renewcommand{\a}{\ensuremath{\alpha}}
\newcommand{\del}{\ensuremath{\delta}}
\newcommand{\eps}{\ensuremath{\varepsilon}}
\renewcommand{\(}{\left(}
\renewcommand{\)}{\right)}
\newcommand{\ds}{\displaystyle}
\newcommand{\pfrac}[2]{\left(\frac{#1}{#2}\right)}
\newcommand{\CA}{\ensuremath{\mathcal{A}}}
\newcommand{\CE}{\ensuremath{\mathcal{E}}}
\renewcommand{\le}{\leqslant}
\renewcommand{\ge}{\geqslant}
\newcommand{\order}{\asymp}
\newcommand{\BB}{\mathcal{B}}
\renewcommand{\AA}{\mathcal{A}}
\newcommand{\ssum}[1]{\sum_{\substack{#1}}}  
\newcommand{\EKq}{\ensuremath{\gamma_q}}  
\renewcommand{\pmod}[1]{\allowbreak\mkern7mu({\operator@font mod}\,\,#1)}
\begin{document}
\title{ ~~\\ Values of the Euler $\phi$-function not divisible by
a given odd prime, and the distribution of Euler-Kronecker constants for
cyclotomic fields}
\author{Kevin Ford, Florian Luca and Pieter Moree}
\date{}
\maketitle
{\def\thefootnote{}
\footnote{{\it Mathematics Subject Classification (2000)}. 11N37, 11Y60}}

\begin{abstract}
\noindent 
Let $\phi$ denote Euler's phi function. For a fixed odd
prime $q$ we investigate the first and second order terms of the
asymptotic series expansion for the
number of $n\le x$ such that $q\nmid \phi(n)$.
Part of the analysis involves a careful study of the Euler-Kronecker constants for
cyclotomic fields.  In particular, we show that the Hardy-Littlewood
conjecture about counts of prime $k$-tuples and a conjecture of Ihara 
about the distribution of these Euler-Kronecker constants cannot be both true.
\end{abstract}

\section{Introduction}
Let
$B(x)$ denotes the counting function of integers $n\le x$ that can be written as sum
of two squares.  In 1908, Landau \cite{L} 
proved the asymptotic formula
\begin{equation}\label{1908}
B(x)\sim \frac{Kx}{\sqrt{\log x}}
\end{equation}
for a certain positive constant $K$.  Landau's proof is based on the analytic theory of Dirichlet $L$-functions,
which come into play because a number $n$ is the sum of two squares if and only if each prime $p|n$ with
$p\equiv 3\pmod{4}$ divides $n$ with an even exponent.
The next year, Landau (\cite{La09a}; see also
\cite[\S 176--183]{La09b}) found a general asymptotic for the number of integers $n\le x$
which are divisible by no prime $p\in S$, where $S$ 
is any set of reduced residue classes modulo a fixed, but arbitrary, positive integer $q$.
In the case where $q$ is an odd prime and $S=\{1\pmod{q}\}$, let $\CA_q(x)$ be the counting function of such $n$.

Let $\phi$ denote Euler's phi function. For fixed odd prime $q$, let $$\CE_q(x)=|\{n\le x :q\nmid \phi(n)\}|.$$ 
Since $q\nmid \phi(n)$
if and only if $q^2\nmid n$ and $p\nmid n$ for all primes $p\equiv 1\pmod{q}$, it follows that
$\CE_q(x) = \CA_q(x) - \CA_q (x/q^2)$.  Landau's theorem immediately implies that
\be\label{uno}
\CE_q(x) \sim \frac{e_0(q) x}{(\log x)^{\frac{1}{q-1}}}
\ee
for some positive constant $e_0(q)$.

A standard application of the Selberg-Delange method (e.g., \cite[Theorem B]{Del}) 
gives an asymptotic expansion
\begin{equation}
\label{zozithet}
{\cal E}_q(x)= \frac{x}{(\log x)^{1/(q-1)}} \(e_0(q) + 
\frac{e_1(q)}{\log x}+\cdots+\frac{e_k(q)}{\log^k x}+O_k\pfrac{1}{\log^{k+1} x} \),
\end{equation}
with $e_j(q)$ being certain constants depending on $q$ and $k\ge 1$ an arbitrary natural number.
One of the main topics of this paper is the behavior of the second order terms $e_1(q)$
from \eqref{zozithet}. 

To place our problem in historical context, recall that Gauss's approximation $\li(x)=\int_2^x dt/\log t$ 
is a much better estimate
of $\pi(x)$, the number of primes up to $x$, than is $x/\log x$.  
Possibly inspired by this fact, 
in his first letter (16 Jan.\,1913) to Hardy, Ramanujan claimed that, for every $r\ge 1$,
\begin{equation}
\label{boehoe}
B(x)=K\int_2^{x} \frac{dt}{\sqrt{\log t}} +O \pfrac{x}{\log^r x}.
\end{equation}
However, Shanks \cite{Sh} showed that (\ref{boehoe}) is false for every $r>3/2$.
On the other hand he showed that the first term in (\ref{boehoe}) yields a closer approximation to
$B(x)$ than does $Kx/\sqrt{\log x}$.
 Similarly, in an unpublished manuscript, possibly included with his final letter (12 Jan.\,1920) to Hardy, 
Ramanujan discussed congruence properties of $\tau(n)$, the coefficient of $q^n$ in $q\prod_{k=1}^\infty (1-q^k)^{24}$, and $p(n)$, the partition function (see \cite{AB} or \cite{BO}).
For a finite set of special primes $q$ and positive constants $\delta_q$, 
Ramanujan claimed that ``it can be shown by transcendental methods that
\begin{equation}
\label{boehoe1}
\ssum{n\le x \\ q\nmid \tau(n)} 1 =C_q\int_2^x \frac{dt}{(\log t)^{\delta_q}}+O\pfrac{x}{\log^r x},
\end{equation}
where $r$ is an positive number''. Although asymptotically correct (as shown by Rankin and Rushforth),
the third author \cite{M}
showed that all claims of the form \eqref{boehoe1} are false for every $r>1+\delta_q$.

It is natural to ask which of the following two
approximations is asymptotically closer to $\CE_q(x)$, the \emph{Landau approximation}
\[
\mathcal{L}_q(x) = \frac{e_0(q)x}{\log^{1/(q-1)}x}
\]
or the \emph{Ramanujan approximation} 
\[
\mathcal{R}_q(x) = e_0(q)\int_2^x \frac{dt}{\log^{1/(q-1)}t}.\]
Integration by parts gives
\[
\mathcal{R}_q(x)=\frac{e_0(q)}{(\log x)^{\frac{1}{q-1}}} \( 1 + \frac{1}{(q-1)\log x} + 
O\pfrac{1}{\log^2 x}\),
\]
and it follows that if $(q-1)e_1(q)/e_0(q) > \frac{1}{2}$, then there exists $x_0$ such that
\begin{equation}
\label{vergelijk} 
|\CE_q(x)-\mathcal{R}_q(x)|<|\CE_q(x)-\mathcal{L}_q(x)|,~\forall x\ge x_0.
\end{equation}
If (\ref{vergelijk}) holds, we say that the Ramanujan approximation is asymptotically closer than the Landau approximation.

\begin{Thm} 
\label{maintwo}
Let $q$ be
an odd prime. For $q\le 67$ the Ramanujan approximation $\mathcal{R}_q(x)$ is asymptotically closer than the
Landau approximation $\mathcal{L}_q(x)$ for ${\cal E}_q(x)$, and for all remaining
primes the Landau 
approximation is asymptotically closer.  That is, $(q-1)e_1(q)/e_0(q) > \frac{1}{2}$
precisely when $q\le 67$.
\end{Thm}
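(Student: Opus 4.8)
The plan is to compute the constants $e_0(q),e_1(q)$ of \eqref{zozithet} explicitly, reduce the inequality $(q-1)e_1(q)/e_0(q)>\tfrac12$ to one about the Euler--Kronecker constant $\EKq$ of $\Q(\zeta_q)$ together with small corrections, and then dispose of large $q$ by estimation and of the finitely many small $q$ by certified numerics. Put $F(s)=\sum_{q\nmid\phi(n)}n^{-s}$, and let $f_p=\mathrm{ord}_q(p)$ be the multiplicative order of a prime $p\ne q$ modulo $q$. The characterisation of $\{n:q\nmid\phi(n)\}$ recalled just before \eqref{uno} gives the Euler product
\[
F(s)=(1+q^{-s})\!\!\prod_{p\ne q,\ q\nmid p-1}\!\!(1-p^{-s})^{-1}=(1-q^{-2s})\,\zeta(s)\!\!\prod_{q\mid p-1}\!\!(1-p^{-s}).
\]
Expanding $\sum_{q\mid p-1}p^{-s}$ over the Dirichlet characters modulo $q$ and recognising $\zeta_K(s)=\zeta(s)\prod_{\chi\ne\chi_0}L(s,\chi)$ as the Dedekind zeta function of $K=\Q(\zeta_q)$, one gets $\prod_{q\mid p-1}(1-p^{-s})=\zeta_K(s)^{-1/(q-1)}U(s)$ with
\[
U(s)=(1-q^{-s})^{-1/(q-1)}\!\!\prod_{p\ne q,\ q\nmid p-1}\!\!\bigl(1-p^{-s f_p}\bigr)^{-1/f_p},
\]
which, since $f_p\ge2$ throughout this product, is holomorphic and non-vanishing for $\mathrm{Re}\,s>\tfrac12$. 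Hence $F(s)=\zeta(s)^{\kappa}\mathcal G(s)$ with $\kappa=\tfrac{q-2}{q-1}$, $\kappa-1=-1/(q-1)$, and $\mathcal G(s)=(1-q^{-2s})\bigl(\tfrac{\zeta(s)}{\zeta_K(s)}\bigr)^{1/(q-1)}U(s)$ holomorphic and non-vanishing near $s=1$.

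Applying the Selberg--Delange method to $F(s)=\zeta(s)^\kappa\mathcal G(s)$ (the classical zero-free regions for $\zeta$ and for the $L(\cdot,\chi)$, $\chi$ modulo $q$, supply the continuation and growth bounds behind \eqref{zozithet}) identifies
\[
e_0(q)=\frac{\mathcal G(1)}{\Gamma(\kappa)},\qquad e_1(q)=\frac{\mathcal G(1)}{\Gamma(\kappa-1)}\Bigl(\kappa\gamma-1+\tfrac{\mathcal G'}{\mathcal G}(1)\Bigr),
\]
so, using $\Gamma(\kappa)=(\kappa-1)\Gamma(\kappa-1)$ and $\kappa-1=-1/(q-1)$,
\[
(q-1)\frac{e_1(q)}{e_0(q)}=1-\kappa\gamma-\frac{\mathcal G'}{\mathcal G}(1).
\]
Differentiating $\log\mathcal G$ at $s=1$ term by term --- the factor $(\zeta/\zeta_K)^{1/(q-1)}$ contributes $\tfrac1{q-1}(\gamma-\EKq)$, because $\zeta/\zeta_K$ is holomorphic and non-vanishing at $s=1$ with logarithmic derivative $\gamma-\EKq$ there (the definitions of $\gamma$ and $\EKq$), while $1-q^{-2s}$ and $U$ contribute only elementary quantities plus the geometrically convergent prime sum $\Sigma_q:=\sum_{p\ne q,\ q\nmid p-1}(\log p)\,p^{-f_p}/(1-p^{-f_p})$ --- one gets after simplification
\[
(q-1)\frac{e_1(q)}{e_0(q)}=1-\gamma+\frac{\EKq}{q-1}-\frac{(q-3)\log q}{(q-1)^2(q+1)}+\Sigma_q,
\]
so the theorem is equivalent to the assertion that
\[
\frac{\EKq}{q-1}+\Sigma_q-\frac{(q-3)\log q}{(q-1)^2(q+1)}>\gamma-\tfrac12=0.07721\ldots
\]
holds precisely for the odd primes $q\le 67$.

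For large $q$ I show the left side is $<\gamma-\tfrac12$. The fractional term is $O((\log q)/q^{2})$, and $\Sigma_q\to0$: since $f_p\ge2$ and $p^{f_p}\equiv1\pmod q$ force $p^{f_p}\ge q+1$, we have $p^{-f_p}\le1/q$ for $p\le q$; as at most $F^{2}$ residues modulo $q$ have order $\le F$, the choice $F=q^{1/4}$ makes the contribution of $p\le q$ to $\Sigma_q$ be $O((\log q)q^{-1/2})$, while the tail $\sum_{p>q}(\log p)p^{-2}$ is $o(1)$. For $\EKq/(q-1)\to0$ I invoke an \emph{effective} unconditional bound $\EKq=o(q)$ for the Euler--Kronecker constant of $\Q(\zeta_q)$, coming from the explicit formula for $\zeta_K$ and a standard zero-free region; the only delicate contribution is that of a possible exceptional (Siegel) zero of the quadratic character modulo $q$, controlled by an effective lower bound for such a zero and yielding, say, $\EKq\ll\sqrt q\,(\log q)^2$. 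This gives an explicit $q_0$ past which the Landau approximation is the closer one. For the finitely many odd primes $q\le q_0$ the left side is then evaluated rigorously: $\Sigma_q$ to arbitrary precision from the orders of the first several primes, and $\EKq=\gamma+\sum_{\chi\ne\chi_0}\tfrac{L'}{L}(1,\chi)$ from rapidly convergent series for $L(1,\chi)$ and $L'(1,\chi)$ (equivalently from Kronecker-limit-type formulas) with certified truncation errors; comparison with $\gamma-\tfrac12$ shows the inequality holds exactly for $q\in\{3,5,7,\dots,61,67\}$, completing the proof.

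The crux lies at the interface of the last two parts. One needs an effective and sufficiently strong upper bound for $\EKq$ to make $q_0$ explicit --- a purely ineffective $\EKq=O_\varepsilon(q^\varepsilon)$, or the mere fact $\EKq=o(q)$, does not pin down $q_0$ --- and then a certified, genuinely high-precision computation of $\EKq$ for every cyclotomic field $\Q(\zeta_q)$ with $q\le q_0$. The precision is indispensable because near $q=67$ the quantity $\EKq/(q-1)$ lies within a few thousandths of the threshold $\gamma-\tfrac12$, so the sign of $(q-1)e_1(q)/e_0(q)-\tfrac12$ cannot be settled by heuristic evaluation.
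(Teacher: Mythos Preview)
Your proposal is correct and follows the same route as the paper: derive the identity of Proposition~\ref{TqL} expressing $(q-1)e_1(q)/e_0(q)$ in terms of $\EKq$, $S(q)$ (your $\Sigma_q$), and elementary terms; handle large $q$ by effective estimates driven by zero-free regions together with an explicit exceptional-zero bound; and settle the remaining finitely many $q$ by certified computation of $\EKq$ via \eqref{EKL}. The one tactical difference worth noting is that the paper, rather than bounding $\EKq$ and $S(q)$ separately, rewrites their combination as a limit involving $\Psi(x;q,1)$ (equations \eqref{e0e1-1}--\eqref{e0e1-2}) and applies McCurley's explicit error term (Lemma~\ref{Mc}) and the class-number-based exceptional-zero bound (Lemma~\ref{beta}) directly to that, obtaining the manageable cutoff $q_0=30000$; your separate bounds $\Sigma_q\ll(\log q)q^{-1/2}$ and $\EKq\ll q^{1/2}(\log q)^2$ would push $q_0$ into the millions and hence require a much heavier, though still feasible, numerical phase.
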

Whereas before only a finite number of `Landau versus Ramanujan' comparison problems were settled, Theorem 
\ref{maintwo} extends this to an infinite number.  The following result reveals in fact that neither $\mathcal{L}_q(x)$ nor $\mathcal{R}_q(x)$ capture
the second term of the expansion \eqref{zozithet}. Throughout this paper, by ERH we mean that all nontrivial zeros of the Dirichlet $L$-functions
for characters modulo $q$ lie on the critical line $\Re s=\frac12$.
\begin{Thm}\label{e0e1}
 We have 
\[
 \frac{e_1(q)}{e_0(q)} = \frac{1-\gamma}{q-1} + 
\begin{cases}
O\pfrac{\log^2 q}{q^{3/2}} & \text{unconditionally with an effective constant}, \\
O_\eps\pfrac{1}{q^{2-\eps}} & \forall \eps>0, \text{ unconditionally with an ineffective constant} \\
O\pfrac{\log^2 q}{q^2} & \text{if there are no exceptional zeros for } q \\
O\pfrac{(\log q)\log\log q}{q^2} & \text{on ERH for $L$-functions modulo $q$}.
\end{cases}
\]
Here $\gamma=0.57721566\ldots$ is Euler's constant, and in this paper,
an exceptional zero is a real number $\beta > 1-1/(9.645908801\log q)$ that is a zero
of $L(s,\chi_q)$, with $\chi_q$ being the real, nonprincipal character modulo $q$.
\end{Thm}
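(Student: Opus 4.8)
The plan is to make the Selberg--Delange analysis behind \eqref{zozithet} explicit in $q$ and read off $e_1(q)/e_0(q)$ from the behaviour at $s=1$ of the generating Dirichlet series of $\CE_q$. Writing $f_\ell=\mathrm{ord}_q(\ell)$ for a prime $\ell\ne q$, the condition $q\nmid\phi(n)$ means $q^2\nmid n$ and no prime factor of $n$ is $\equiv1\pmod q$, so
\[
\sum_{q\nmid\phi(n)}\frac1{n^s}=(1-q^{-2s})\,\frac1{1-q^{-s}}\prod_{\ell:\,f_\ell\ge2}\frac1{1-\ell^{-s}}
=\zeta(s)^{\frac{q-2}{q-1}}\Big(\prod_{\chi\ne\chi_0}L(s,\chi)\Big)^{-\frac1{q-1}}\widetilde H(s),
\]
where $\chi$ runs over Dirichlet characters mod $q$ and, using the factorization $\prod_{\chi\bmod q}(1-\chi(\ell)\ell^{-s})^{-1}=(1-\ell^{-f_\ell s})^{-(q-1)/f_\ell}$ (verified Euler factor by Euler factor),
\[
\widetilde H(s)=(1-q^{-2s})(1-q^{-s})^{-\frac1{q-1}}\prod_{\ell\ne q,\ f_\ell\ge2}(1-\ell^{-f_\ell s})^{-1/f_\ell}
\]
is holomorphic and non-vanishing for $\Re s>\tfrac12$.

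Next I would run the Hankel-contour version of the Selberg--Delange method with $\alpha=\tfrac{q-2}{q-1}$ (not a nonpositive integer, so no $\log\log x$ terms appear). If $L(s,\chi_q)$ has an exceptional zero $\beta$, deforming the contour around the resulting branch point adds only a term $\ll x^{\beta}(\log x)^{\frac1{q-1}-1}$, absorbed into the error of \eqref{zozithet}, so the $e_j(q)$ are exactly the Selberg--Delange coefficients built from the Taylor expansions at $s=1$ of $G(s):=\big(\prod_{\chi\ne\chi_0}L(s,\chi)\big)^{-1/(q-1)}\widetilde H(s)$ and of $(s-1)\zeta(s)$. Substituting $s-1=u/\log x$ and using $\frac1{2\pi i}\int u^{-z}e^u\,du=1/\Gamma(z)$ gives $e_0(q)=G(1)/\Gamma(\alpha)$ and $e_1(q)/e_0(q)=(\alpha-1)\big(\alpha\gamma+G'(1)/G(1)-1\big)$, the $\gamma$ entering through $(s-1)\zeta(s)=1+\gamma(s-1)+O((s-1)^2)$. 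Since $\alpha-1=-\tfrac1{q-1}$, since $\zeta_{\Q(\zeta_q)}(s)=\zeta(s)\prod_{\chi\ne\chi_0}L(s,\chi)$ gives $\EKq=\gamma+\sum_{\chi\ne\chi_0}\tfrac{L'}{L}(1,\chi)$, and since $G'(1)/G(1)=-\tfrac1{q-1}\sum_{\chi\ne\chi_0}\tfrac{L'}{L}(1,\chi)+\widetilde H'(1)/\widetilde H(1)$, a short simplification yields the exact identity
\[
\frac{e_1(q)}{e_0(q)}=\frac{1-\gamma}{q-1}+\frac{\EKq}{(q-1)^2}-\frac1{q-1}\cdot\frac{\widetilde H'(1)}{\widetilde H(1)}.
\]

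It remains to estimate the last two terms. The quantity $\widetilde H'(1)/\widetilde H(1)$ is an elementary sum dominated by $\sum_{f_\ell\ge2}(\log\ell)\ell^{-f_\ell}$; using that $q\mid\ell^{f_\ell}-1$ forces $\ell^{f_\ell}>q$, that the primes with $\mathrm{ord}_q(\ell)=d$ lie in at most $\phi(d)$ residue classes mod $q$, and that $\ell>q^{1/d}$, one gets $\widetilde H'(1)/\widetilde H(1)\ll q^{-1+o(1)}$, negligible against every target error. The core is $\EKq=\gamma_{\Q(\zeta_q)}$: from Hadamard's product for $L(s,\chi)$, the only potentially large contribution to $\sum_{\chi\ne\chi_0}\tfrac{L'}{L}(1,\chi)$ is the term $\approx(1-\beta)^{-1}$ of an exceptional zero $\beta$ of $L(s,\chi_q)$, all other zeros lying in the classical zero-free region; this gives $|\EKq|\ll\log^2 q+(1-\beta)^{-1}$ in general and $|\EKq|\ll\log q\log\log q$ under ERH. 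The four cases then follow from the four available lower bounds for $1-\beta$: effectively $1-\beta\gg(\sqrt q\,\log^2 q)^{-1}$ (from $L(1,\chi_q)\gg q^{-1/2}$ and $L'\ll\log^2 q$ near $s=1$, via the mean value theorem on $[\beta,1]$); ineffectively $1-\beta\gg_\eps q^{-\eps}$ by Siegel's theorem; $1-\beta\ge(9.645908801\log q)^{-1}$ when there is no exceptional zero; and no exceptional zero at all on ERH. Dividing by $(q-1)^2$ gives the stated error terms.

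The main obstacle is the pair of sharp bounds on $\gamma_{\Q(\zeta_q)}$ in the unconditional ``no exceptional zeros'' case and under ERH: these require controlling $\sum_{\chi\ne\chi_0}\tfrac{L'}{L}(1,\chi)$ (equivalently $\zeta_{\Q(\zeta_q)}'/\zeta_{\Q(\zeta_q)}$ near $s=1$) with full use of the cancellation among the $q-2$ characters rather than a character-by-character estimate, which is precisely where the paper's study of Euler--Kronecker constants of cyclotomic fields enters; a secondary, purely elementary obstacle is keeping the estimate for $\widetilde H'(1)/\widetilde H(1)$ uniformly below the smallest target error, forcing the use of the sparsity and largeness of primes of small multiplicative order modulo $q$.
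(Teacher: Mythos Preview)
Your reduction to the exact identity
\[
\frac{e_1(q)}{e_0(q)}=\frac{1-\gamma}{q-1}+\frac{\gamma_q}{(q-1)^2}-\frac1{q-1}\cdot\frac{\widetilde H'(1)}{\widetilde H(1)}
\]
is correct and matches the paper's Proposition~\ref{TqL}. The elementary estimate for $\widetilde H'(1)/\widetilde H(1)$ (equivalently for $S(q)$) also goes through, though your sketch needs a little more care to reach $O((\log q)/q)$ rather than merely $q^{-1+o(1)}$; compare Lemma~\ref{sommieb}, which splits the primes at $\sqrt q$ and $q$.

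The genuine gap is the claimed bound $|\gamma_q|\ll\log^2 q+(1-\beta)^{-1}$ ``from Hadamard's product''. Writing out the Hadamard formula for each $L'/L(1,\chi)$ produces a term $-\tfrac12\log(q/\pi)$ per character, so the sum over the $q-2$ nonprincipal characters carries a main term of order $q\log q$ that must cancel against the full sum over zeros of $\zeta_{K(q)}$. Extracting a remainder of size $O(\log^2 q)$ from that cancellation is not a formal consequence of the classical zero-free region; you correctly flag this as the main obstacle in your last paragraph, but you do not resolve it, and the Hadamard product by itself will not.

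The paper's route is different in substance. It translates $\gamma_q$ into a prime sum via Proposition~\ref{EKsump}, rewrites $(q-1)e_1/e_0$ as in \eqref{e0e1-1}, and splits the sum $\sum_{n\equiv1\,(q)}\Lambda(n)/n$ at a cutoff $y$. For $n>y$ it invokes McCurley's explicit prime number theorem for $\psi(x;q,1)$ (Lemma~\ref{Mc}), which is where the exceptional-zero contribution of order $(1-\beta_0)^{-1}$ actually enters. For $n\le y$ it uses the Brun--Titchmarsh inequality (Proposition~\ref{sumsmallp}), giving $\sum_{p\le y,\,p\equiv1}\frac{\log p}{p-1}\ll q^{-1}\bigl(\log y+(\log q)\log\log(y/q)\bigr)$. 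Taking $y=\exp(4R\log^2 q)$ makes both pieces $O((\log^2 q)/q)$; under ERH one may take $y=q^3$ and the Brun--Titchmarsh piece drops to $O((\log q)(\log\log q)/q)$. This sieve input for the small primes is the key idea missing from your proposal.
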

McCurley \cite[Theorem 1.1]{Mc} showed that for each $q$, the region
$$\Re s \ge 1-\frac{1}{9.645908801 \log \max(q,q|\Im s|,10)}$$ contains at most one zero of 
$\prod_{\chi \mod q} L(s,\chi)$, and if the zero exists, it is real, simple and a zero of 
$L(s,\chi_q)$.

The remainder of the introduction is organized as follows.  Subsection \ref{e0} presents
the necessary analytic theory to understand $e_0(q)$.
In subsection \ref{e1}, 
we express the ratio $e_1(q)/e_0(q)$ in terms of two additional quantities $S(q)$ and $\gamma_q$,
(defined in \eqref{S(q)} and \eqref{EK}, respectively) and which are intesting to study in their own right.
We also state a theorem about the behavior of $S(q)$.  Subsection \ref{EK1} gives some general background
on $\gamma_q$ (called an Euler-Kronecker constant), and in subsection \ref{EK2} we present several theorems
and conjectures about $\gamma_q$.

A paper by Spearman and Williams \cite{SW} inspired us to study ${\cal E}_q(x)$.
In a rather roundabout way they obtained the asymptotic (\ref{uno}) (but not (\ref{zozithet})) and gave
an expression for $e_0(q)$ involving invariants of cyclotomic fields. We point out in
the next subsection that on using the Dedekind zeta function of a cyclotomic field, one
can rederive their expression (\ref{duo}) for $e_0(q)$ more directly.

\subsection{The first order term in \eqref{zozithet}}\label{e0}
The basis of \eqref{zozithet} is
an analysis of the Dirichlet series generating function for $n$ with
$q \nmid \phi(n)$, namely
\be\label{hdef}
h_q(s) = (1+q^{-s}) \prod_{\substack{p\ne q \\ p\not\equiv 1\pmod{q}}} (1-p^{-s})^{-1}
= (1-q^{-2s}) \zeta(s) \prod_{p\equiv 1\pmod{q}} (1-p^{-s}),
\ee
where $\zeta(s)$ is the Riemann zeta function.  Roughly speaking, the Selberg-Delange 
method provides an asymptotic expansion for $\sum_{n\le x} a_n$ in decreasing powers
 of $\log x$ provided that the Dirichlet series $\sum_{n=1}^\infty a_n n^s$ behaves
 like $\zeta(s)^z$ for some fixed complex number $z$.  If $a_n$ is multiplicative,
 this means that $a_p$ has average value $z$ over primes $p$.  In our case,
 $z=\frac{q-2}{q-1}$ by the prime number theorem for arithmetic progressions.
We observe that the primes $p\equiv 1\pmod{q}$ are precisely those primes which 
split completely in $K(q):=\mathbb Q(e^{2\pi i/q})$ and thus $\zeta_{K(q)}(s)$, 
the Dedekind zeta function  of $K(q)$, comes
into play.  We prove the following in  Section  \ref{AnTh}.

\begin{prop}\label{e0_fromzetaK}
Let $q$ be an odd prime.  Then
\begin{equation}
\label{fqs}
h_q(s)=\frac{(1-q^{-2s}) \zeta(s)}{\Big[C(q,s)(1-q^{-s})\zeta_{K(q)}(s)\Big]^{\frac{1}{q-1}}},
\end{equation}
where
\begin{equation}
\label{constant}
C(q,s)=\prod_{\substack{p\ne q\\ f_p\ge 2}} \Big(1-\frac{1}{p^{sf_p}}\Big)^{\frac{q-1}{f_p}},
\end{equation}
and $f_p=\operatorname{ord}_q p$ (the least positive $f$ with $p^f\equiv 1\pmod{q}$).
Furthermore,
$$
e_0(q)=\frac{1-q^{-2}}{\Gamma(\frac{q-2}{q-1})
\(C(q)(1-\frac{1}{q})\alpha^{\phantom{p}}_{K(q)}\)^{\frac{1}{q-1}}}, \qquad
\alpha^{\phantom{p}}_{K(q)} = \operatornamewithlimits{Res}_{s=1} \, \zeta_{K(q)}(s).
$$
\end{prop}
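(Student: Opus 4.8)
The plan is to establish the identity \eqref{fqs} by manipulating the Euler product of $\zeta_{K(q)}(s)$, and then to read off $e_0(q)$ by feeding \eqref{fqs} into the Selberg--Delange analysis that underlies \eqref{zozithet}. Recall the splitting of rational primes in $K(q)=\mathbb{Q}(e^{2\pi i/q})$: the prime $q$ is totally ramified, $(q)=\mathfrak q^{\,q-1}$ with $N\mathfrak q=q$, while an unramified prime $p\ne q$ factors into $(q-1)/f_p$ prime ideals, each of norm $p^{f_p}$, with $f_p=\operatorname{ord}_q p$. Hence for $\Re s>1$,
\[
\zeta_{K(q)}(s)=\frac{1}{1-q^{-s}}\prod_{p\ne q}\bigl(1-p^{-f_p s}\bigr)^{-(q-1)/f_p},
\qquad\text{so}\qquad
(1-q^{-s})\,\zeta_{K(q)}(s)=\prod_{p\ne q}\bigl(1-p^{-f_p s}\bigr)^{-(q-1)/f_p}.
\]

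First I would split the last product according to whether $p\equiv1\,(q)$ (equivalently $f_p=1$), contributing $\prod_{p\equiv1\,(q)}(1-p^{-s})^{-(q-1)}$, or $p\ne q$ with $p\not\equiv1\,(q)$ (equivalently $f_p\ge2$), contributing exactly $C(q,s)^{-1}$ by the definition \eqref{constant}. This gives
\[
C(q,s)\,(1-q^{-s})\,\zeta_{K(q)}(s)=\prod_{p\equiv1\,(q)}\bigl(1-p^{-s}\bigr)^{-(q-1)},
\]
and taking the $(q-1)$th root --- the branch real and positive for real $s>1$, which is unambiguous since every factor on the right is then positive --- yields $\bigl[C(q,s)(1-q^{-s})\zeta_{K(q)}(s)\bigr]^{1/(q-1)}=\prod_{p\equiv1\,(q)}(1-p^{-s})^{-1}$. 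Substituting the reciprocal of this into the second expression for $h_q(s)$ in \eqref{hdef} produces \eqref{fqs}. One should record that the rearrangement of Euler factors is legitimate because the products converge absolutely for $\Re s>1$ --- and $C(q,s)$, having all $f_p\ge2$, even for $\Re s>\tfrac12$, where it is moreover nonvanishing --- so that \eqref{fqs}, with a fixed branch choice, persists by analytic continuation throughout the domain of $\zeta_{K(q)}$.

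For the value of $e_0(q)$ I would apply the Selberg--Delange method in the form that produces \eqref{zozithet}, with exponent $z=\tfrac{q-2}{q-1}=1-\tfrac1{q-1}$. Writing $h_q(s)=\zeta(s)^{z}G_q(s)$, equation \eqref{fqs} gives
\[
G_q(s)=(1-q^{-2s})\left[\frac{\zeta(s)}{C(q,s)\,(1-q^{-s})\,\zeta_{K(q)}(s)}\right]^{1/(q-1)}.
\]
Since $\zeta(s)/\zeta_{K(q)}(s)=\prod_{\chi\ne\chi_0}L(s,\chi)^{-1}$ --- the product over the $q-2$ nonprincipal Dirichlet characters $\chi$ modulo $q$ --- is holomorphic and nonzero near $s=1$ (indeed $L(1,\chi)\ne0$), while $C(q,s)$ and $1-q^{-s}$ are holomorphic and nonvanishing for $\Re s>\tfrac12$, the function $G_q$ is holomorphic and nonzero at $s=1$ (and in a standard Selberg--Delange domain, by the classical zero-free regions). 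Using $\zeta(s)/\zeta_{K(q)}(s)\to1/\alpha_{K(q)}$ as $s\to1$ and writing $C(q):=C(q,1)$, we obtain
\[
G_q(1)=(1-q^{-2})\Bigl[\,C(q)\bigl(1-\tfrac1q\bigr)\alpha_{K(q)}\,\Bigr]^{-1/(q-1)}.
\]
The Tauberian conclusion of the method then gives $\CE_q(x)\sim \dfrac{G_q(1)}{\Gamma(z)}\,\dfrac{x}{(\log x)^{1/(q-1)}}$ (note $z-1=-\tfrac1{q-1}$), and comparison with \eqref{uno} gives $e_0(q)=G_q(1)/\Gamma\!\bigl(\tfrac{q-2}{q-1}\bigr)$, which is the asserted formula.

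The computation is short rather than deep; the steps that genuinely require care are (i) fixing the $(q-1)$th-root branch consistently and verifying that $C(q,s)$ is holomorphic and non-vanishing for $\Re s>\tfrac12$, so that $C(q,s)^{1/(q-1)}$ is well defined and $G_q(1)\ne0$, and (ii) checking that $G_q$ has the regularity and moderate growth demanded by the Selberg--Delange theorem --- but the latter is precisely what underpins \eqref{zozithet}, so it may be quoted. I expect (i), and in particular the elementary verification that $C(q,s)$ continues holomorphically past $\Re s=1$ without zeros, to be the only place calling for a separate, if brief, argument.
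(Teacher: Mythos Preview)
Your argument is correct and is essentially the same as the paper's: both derive \eqref{fqs} from the Euler product of $\zeta_{K(q)}$ via the splitting law (the paper's \eqref{rezzie}) combined with \eqref{hdef}, then rewrite $h_q(s)$ as $\zeta(s)^{(q-2)/(q-1)}$ times a factor holomorphic and nonzero at $s=1$ (the paper does this through \eqref{product} and \eqref{hqLschi}, you through $G_q$), and read off $e_0(q)$ from the Selberg--Delange leading term. The only cosmetic difference is that the paper phrases the auxiliary factor via $\prod_{\chi\ne\chi_0}L(s,\chi)^{-1/(q-1)}$ directly, whereas you pass through $\zeta/\zeta_{K(q)}$; the content, including the checks on $C(q,s)$ for $\Re s>\tfrac12$ and the growth hypothesis, is identical.
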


The main result in Spearman and Williams \cite{SW} is the asymptotic (\ref{uno}) with $e_0(q)$ expressed in terms of the
parameters of $K(q)$; namely, 
\begin{equation}\label{duo}
e_0(q)=\frac{(q+1)(q-1)^{\frac{q-2}{q-1}}\Gamma(\frac{1}{q-1}) \sin(\frac{\pi}{q-1})}
{2^{\frac{q-3}{2(q-1)}}q^{\frac{3(q-2)}{2(q-1)}}\pi^{\frac32}(h(K(q))R(K(q))C(q))^{\frac{1}{q-1}}},
\end{equation}
where $h(K(q))$ denotes the class number of $K(q)$ and $R(K(q))$ is its regulator. 
Spearman and Williams gave a rather involved description of $C(q)$, see
Section \ref{slechtec}.  Making use of the Euler product for $\zeta_{K(q)}(s)$, we will show
that actually $C(q)=C(q,1)$.
We have, for example,  $C(3)=\prod_{p\equiv 2 \pmod{3}}(1-1/p^2)$ (this is
Lemma 3.1 of \cite{SW}).  Our argument also gives a very short proof of an estimate of a product
from \cite{SW} (inequality  (\ref{prodigee}) below).  
On using that 
$\Gamma (\frac{1}{q-1}) \Gamma(\frac{q-2}{q-1}) = \frac{\pi}{\sin(\pi/(q-1))}$
and formula (\ref{residu}) for $\alpha^{\phantom{p}}_{K(q)}$ below, it is seen  that
the $e_0(q)$ as given in Proposition \ref{e0_fromzetaK} matches the formula
\eqref{duo}.

\subsection{The second order term in \eqref{zozithet}}\label{e1}
Our argument for Theorems \ref{maintwo} and \ref{e0e1}
proceed by first relating the $e_1(q)/e_0(q)$ to two additional quantities,
\be\label{S(q)}
S(q) = \frac{1}{q-1} \frac{C'(q,1)}{C(q,1)} = \sum_{p\ne q,f_p\ge 2}  
\frac{\log p} {p^{f_p}-1}
\ee
and the \emph{Euler-Kronecker constant}
\be\label{EK}
\gamma_q = \gamma_{K(q)}^{\phantom{p}} = \lim_{s\to 1} \( \frac{\zeta_{K(q)}(s)}{\alpha^{\phantom{p}}_{K(q)}} -\frac{1}{s-1} \).
\ee
\begin{prop}\label{TqL}
We have 
\[
(q-1)\frac{e_1(q)}{e_0(q)}
= 1 - \gamma + \frac{(3-q)\log q}{(q-1)^2(q+1)} + S(q) +
\frac{\gamma_q}{q-1}.
\]
\end{prop}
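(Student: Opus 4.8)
The plan is to read off $e_1(q)/e_0(q)$ from the Selberg--Delange expansion \eqref{zozithet} as the value at $s=1$ of a logarithmic derivative, and then to evaluate that derivative using \eqref{fqs}. To set up, put $z=\frac{q-2}{q-1}$ and write $h_q(s)=(s-1)^{-z}\Phi_q(s)$. Starting from \eqref{fqs} and using $z-1+\frac{1}{q-1}=0$ to cancel the ambient powers of $s-1$, one gets
\[
\Phi_q(s)=\frac{(1-q^{-2s})\,(s-1)\zeta(s)}{\big[C(q,s)(1-q^{-s})\,(s-1)\zeta_{K(q)}(s)\big]^{1/(q-1)}},
\]
which is holomorphic and zero-free in a neighbourhood of $s=1$: $(s-1)\zeta(s)$ and $(s-1)\zeta_{K(q)}(s)$ are holomorphic and non-vanishing there (with values $1$ and $\alpha_{K(q)}$), the factors $1-q^{-s}$, $1-q^{-2s}$ are non-zero there, and $C(q,s)=\prod_{p\ne q,\,f_p\ge 2}(1-p^{-sf_p})^{(q-1)/f_p}$ converges to a holomorphic non-vanishing function for $\Re s>\frac12$ (the product converges absolutely, since $\sum_p p^{-2\Re s}<\infty$ and the exponents $(q-1)/f_p$ are bounded), so $C(q)=C(q,1)$ is well defined. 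I take the branch of $[\,\cdot\,]^{1/(q-1)}$ that is positive for real $s>1$.

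Next I would invoke the Selberg--Delange method for $h_q$ in the explicit form giving \eqref{zozithet} with
\[
e_j(q)=\frac{b_j}{\Gamma(z-j)},\qquad \frac{\Phi_q(s)}{s}=\sum_{k\ge 0}b_k\,(s-1)^k ;
\]
its analytic input is precisely the zero-free region for $\prod_{\chi \bmod q}L(s,\chi)$ used in Section \ref{AnTh} to prove Proposition \ref{e0_fromzetaK}, and the coefficient formula is Theorem B of \cite{Del} (equivalently, shift the truncated Perron integral $\frac{1}{2\pi i}\int h_q(s)\frac{x^s}{s}\,ds$ onto a Hankel contour around $s=1$ and use $\frac{1}{2\pi i}\int_{\mathcal{H}}(s-1)^{k-z}e^{(s-1)\log x}\,ds=(\log x)^{z-1-k}/\Gamma(z-k)$). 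As a check, $b_0=\Phi_q(1)=(1-q^{-2})\big/\big(C(q)(1-\tfrac1q)\alpha_{K(q)}\big)^{1/(q-1)}$, so $e_0(q)=b_0/\Gamma(z)$ reproduces Proposition \ref{e0_fromzetaK}. Since $b_1=\Phi_q'(1)-\Phi_q(1)$, $z-1=-\frac{1}{q-1}$, and $e_1(q)/e_0(q)=(z-1)b_1/b_0$, this yields
\[
(q-1)\,\frac{e_1(q)}{e_0(q)}=-\Big(\frac{\Phi_q'(1)}{\Phi_q(1)}-1\Big)=1-\frac{d}{ds}\log\Phi_q(s)\Big|_{s=1}.
\]

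Then I would differentiate $\log\Phi_q$ term by term at $s=1$. Directly, $\frac{d}{ds}\log(1-q^{-2s})\big|_{s=1}=\frac{2\log q}{q^2-1}$ and $\frac{d}{ds}\log(1-q^{-s})\big|_{s=1}=\frac{\log q}{q-1}$. Since $(s-1)\zeta(s)=1+\gamma(s-1)+O((s-1)^2)$, we get $\frac{d}{ds}\log\big((s-1)\zeta(s)\big)\big|_{s=1}=\gamma$, and directly from the definition \eqref{EK}, $\frac{d}{ds}\log\big((s-1)\zeta_{K(q)}(s)\big)\big|_{s=1}=\gamma_q$. Finally $\frac{d}{ds}\log C(q,s)\big|_{s=1}=\frac{C'(q,1)}{C(q,1)}=(q-1)S(q)$ by \eqref{S(q)}. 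Substituting these into the identity above and simplifying $-\frac{2\log q}{q^2-1}+\frac{\log q}{(q-1)^2}=\frac{(3-q)\log q}{(q-1)^2(q+1)}$ gives the claimed formula for $(q-1)e_1(q)/e_0(q)$.

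The step I expect to be the main obstacle is the middle one: obtaining the Selberg--Delange expansion in the precise normalisation $e_j(q)=b_j/\Gamma(z-j)$, which hinges on the correct zero-free region for the Dirichlet $L$-functions modulo $q$ (the analytic core of the paper, handled in Section \ref{AnTh}). Everything after that is the elementary bookkeeping sketched above; in particular, no further input about $\gamma_q$ or $S(q)$ is needed here beyond their definitions.
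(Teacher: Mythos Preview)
Your proof is correct and follows essentially the same approach as the paper: extract $e_1(q)/e_0(q)=(z-1)b_1/b_0$ from the Selberg--Delange expansion and compute the logarithmic derivative of $\Phi_q(s)=h_q(s)(s-1)^z$ at $s=1$. The only (cosmetic) difference is that you differentiate the factorization \eqref{fqs} involving $\zeta_{K(q)}$, so $\gamma_q$ drops out directly from its definition \eqref{EK}, whereas the paper differentiates the equivalent factorization \eqref{hqLschi} involving $\prod_{\chi\ne\chi_0}L(s,\chi)$ and then uses $\gamma_q=\gamma+\sum_{\chi\ne\chi_0}L'(1,\chi)/L(1,\chi)$ (which it records as \eqref{EKL}) to finish; your route is marginally more direct but the paper's route has the side benefit of establishing \eqref{EKL}.
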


In Section \ref{Sec:S(q)}, we prove the following upper estimates for $S(q)$:

\begin{Thm}\label{thm:S(q)}$~$\\
{\rm (a)} We have $\ds S(q) \le 45/q$ for all $q$;\\
{\rm (b)} Let $\epsilon>0$ be fixed. The inequality 
$S(q)<\epsilon/q$ holds for $(1+o(1))\pi(x)$ primes $q\le x$.
\end{Thm}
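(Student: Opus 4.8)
The plan is to estimate the sum $S(q) = \sum_{p\ne q,\, f_p\ge 2} \frac{\log p}{p^{f_p}-1}$ by organizing the primes $p$ according to the value of $f_p = \operatorname{ord}_q p$. For each divisor $f\mid (q-1)$ with $f\ge 2$, the number of residue classes $a\pmod q$ with $\operatorname{ord}_q a = f$ is exactly $\phi(f)\le f$, so by partial summation (or the Brun--Titchmarsh/Chebyshev-type bound $\sum_{p\le t,\, p\equiv a(q)} \log p \ll t/\phi(q)$ uniformly) the contribution of those $p$ to $S(q)$ is $O\big(\frac{\phi(f)}{q}\cdot$ (tail of $\sum p^{-f}\log p)\big)$. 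The dominant term comes from $f=2$: there the relevant $p$ run over the roughly $(q-1)/2$ quadratic nonresidues that are $\equiv -1$-type, and $\sum_{p\equiv a(q)} \frac{\log p}{p^2-1}$, summed over these classes, is $O(1/q)$ after one more application of partial summation, since $\sum_{p\le t} \log p \ll t$. Summing the geometric-in-$f$ contributions over all $f\ge 2$ gives $S(q)\ll 1/q$ with an absolute implied constant, which is the shape of part (a).

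For the explicit constant $45$ in part (a), I would split $S(q)$ into the range of small primes $p$ (say $p\le q$, or even $p\le$ some fixed modest bound) and large primes. For small $p$ one uses directly that $f_p\ge 2$ forces $p^{f_p}\ge p^2$, and crudely bounds $\sum_{p}\frac{\log p}{p^2-1}$ by a convergent constant; the point is that such $p$ are constrained to lie in at most $\sum_{f\ge 2,\,f\mid q-1}\phi(f)\le q-1-\phi(q-1)$ classes, but more simply one just keeps the full convergent sum $\sum_{p\ge 2}\frac{\log p}{p^2-1}$ as a harmless constant and then divides by nothing — the saving of $1/q$ has to come from counting how many primes in a given short interval can have a given order. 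The cleanest route is: for $p\le q$, note $f_p\ge 2\Rightarrow p^{f_p-1}\ge p \ge $ nothing useful, so instead bound the whole small-$p$ part by observing $p^{f_p}\equiv 1\pmod q$, i.e. $q\mid p^{f_p}-1$, hence $p^{f_p}-1\ge q$, giving $\frac{\log p}{p^{f_p}-1}\le \frac{\log p}{\max(q,\,p^{f_p}-1)}$; then $\sum_{p}\frac{\log p}{\max(q,p^2-1)}\le \frac1q\sum_{p\le \sqrt q}\log p + \sum_{p>\sqrt q}\frac{\log p}{p^2}\ll \frac1q$ by Chebyshev on the first piece and convergence on the second. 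Tracking the Chebyshev constant $\theta(t)<1.000028\,t$ (Rosser--Schoenfeld) and the tail of $\sum \frac{\log p}{p^2}$ numerically yields the bound $45/q$; the constant is not delicate, so I would not optimize.

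For part (b), the key observation is that $S(q)$ is small unless $q-1$ has a small divisor $f$ whose associated classes happen to contain an unusually small prime, and in particular the $f=2$ term dominates whenever it is present with a small prime, i.e. whenever the least quadratic nonresidue-ish prime dividing some $p^2-1$ is small. More precisely, write $S(q) = \sum_{f\ge 2} T_f(q)$ where $T_f(q)$ collects the primes of order exactly $f$; one shows $T_f(q)\ll \frac{\phi(f)}{q}\cdot 2^{-f}$ for $f$ beyond a point, so the sum over $f\ge f_0$ is $\ll 1/(q\,2^{f_0})$, and hence for $S(q)\ge \epsilon/q$ to hold, \emph{some} bounded $f$ must contribute at least $c(\epsilon)/q$, which in turn forces the smallest prime $p$ with $\operatorname{ord}_q p = f$ to satisfy $p \le P(\epsilon)$ for an effective $P(\epsilon)$. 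The number of primes $q\le x$ for which there exists a prime $p\le P(\epsilon)$ of order $f\le P(\epsilon)$ modulo $q$ is at most $\sum_{p\le P(\epsilon)}\sum_{f\le P(\epsilon)} |\{q\le x: q\mid p^f-1\}| = O_\epsilon(\log x) = o(\pi(x))$, since for fixed $p,f$ the integer $p^f-1$ has only $O_\epsilon(1)$ prime factors. Therefore $S(q)<\epsilon/q$ for all but $o(\pi(x))$ primes $q\le x$, which is the claim. The main obstacle is making the per-$f$ estimate $T_f(q)\ll \frac{\phi(f)}{q}2^{-f}$ genuinely uniform and extracting a clean effective $P(\epsilon)$; the bookkeeping with partial summation and the Chebyshev bounds is routine but must be done carefully to separate "the sum is small automatically" from "the sum is large only because of one explicit small prime."
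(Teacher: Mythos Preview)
Your argument for part (a) contains a computational error that invalidates it. You claim
\[
\frac{1}{q}\sum_{p\le\sqrt{q}}\log p \;+\; \sum_{p>\sqrt{q}}\frac{\log p}{p^{2}} \;\ll\; \frac{1}{q},
\]
but by Chebyshev $\sum_{p\le\sqrt q}\log p \sim \sqrt q$, so the first term is $\asymp q^{-1/2}$, and the second is $\asymp (\log q)\,q^{-1/2}$. The bound $p^{f_p}-1\ge q$ is too weak; the relevant observation (equation~\eqref{np} in the paper) is that $q\mid (p^{f_p}-1)/(p-1)$ whenever $f_p\ge 2$, giving $p^{f_p}-1\ge q(p-1)$. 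Combined with $p^{f_p}-1\ge p^{3}-1$ for $p<q$ (where necessarily $f_p\ge 3$), this yields $S(q)\le(\log q+1)/(2q)$ (Lemma~\ref{sommieb}). That is still a factor $\log q$ short of $45/q$, and removing this logarithm is the entire difficulty. The paper treats $q>10^{30}$ via a six-range decomposition; in the hardest range (small $p$, moderate $f_p$, $n_p<\min(p,f_p)$) one shows $f_p$ is prime and that $(p^{f_p}-1)/(p-1)=q$ exactly, and then invokes Matveev's lower bound for linear forms in logarithms to prove at most one such $p$ lies in each dyadic interval. Nothing of this kind appears in your sketch, and without it the constant $45$ cannot be reached.

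For part (b), the estimate $T_f(q)\ll\frac{\phi(f)}{q}2^{-f}$ is false. If $q=2^r-1$ is a Mersenne prime then $f_2=r$ and the single term $p=2$ already gives $T_r(q)\ge(\log 2)/q$, whereas $\phi(r)\,q^{-1}2^{-r}\le r/q^{2}$. Hence your conclusion that ``some bounded $f$ must contribute $\ge c(\epsilon)/q$'' fails: the offending $f$ can be as large as $\log_2 q$, and then the final count $\sum_{p\le P(\epsilon)}\sum_{f\le P(\epsilon)}|\{q:q\mid p^f-1\}|$ is simply not applicable. The paper's route is different: after disposing of the easy ranges one is left with primes $p\le\log^4 q$ having $F<f_p<F'$ where $F\asymp\log q/\log\log q$ and $F'\asymp\log q$; since $f_p\mid q-1$, this forces $q-1$ to have a divisor in $(F,F')$, and Ford's theorem on integers with a divisor in a given interval then shows this occurs for only $o(\pi(x))$ primes $q\le x$. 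A first-moment argument over $q$ in the spirit of your proposal (swap sums, use $\omega(p^f-1)\ll f\log p$ to bound the number of admissible $q$) could plausibly replace the appeal to the divisor theorem, but that is not the argument you wrote.
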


The analysis used to prove Theorem \ref{thm:S(q)} depends on 
estimates for linear forms in logarithms to deal with the summands with
$p$ and $f_p$ both small.

As we will see, $\gamma_q$ is typically around $\log q$ and hence 
Theorem \ref{thm:S(q)} allows us to deduce that $\gamma_q$ has a larger influence
on the ratio $e_1(q)/e_0(q)$ than does $S(q)$.
The Euler-Kronecker constant (or invariant) can be defined for {\it any} number field.
Some history and basics will be recalled in the next section.


\subsection{Euler-Kronecker constants for number fields}\label{EK1}
For a general number field $K$ we have, for $\Re (s)>1$, the 
\emph{Dedekind zeta function}
$$
\zeta_K(s)=\sum_{\mathfrak{a}} \frac{1}{N{\mathfrak{a}}^{s}}
=\prod_{\mathfrak{p}}\frac{1}{1-N{\mathfrak{p}}^{-s}}.
$$
Here, $\mathfrak{a}$ runs over non-zero ideals in ${\cal O}_K$, the ring of integers of $K$,
$\mathfrak{p}$ runs over the prime ideals in ${\cal O}_K$ and $N{\mathfrak{a}}$ is the norm
of $\mathfrak{a}$.
It is known that $\zeta_K(s)$ can be analytically continued to $\C - \{1\}$,
and that at $s=1$ it has a simple pole with residue $\alpha^{\phantom{p}}_K$,
where \cite[Theorem 61]{FT}
\begin{equation}
\label{zoklassiek}
\alpha^{\phantom{p}}_K=\frac{2^{r_1}(2\pi)^{r_2}h(K)R(K)}{w(K)\sqrt{|d(K)|}},
\end{equation}
where $K$ has $r_1$ (resp. $2r_2$) real (resp. complex) embeddings, class number $h(K)$,
regulator $R(K)$, $w(K)$ roots of unity, and discriminant $d(K)$.
The Dedekind zeta function $\zeta_K(s)$ has a Laurent expansion
\begin{equation}
\label{laurent}
\zeta_K(s)=\frac{\alpha^{\phantom{p}}_K}{s-1}+c_0(K)+c_1(K)(s-1)+c_2(K)(s-1)^2+\cdots
\end{equation}
The ratio $\gamma_K^{\phantom{p}} = c_0(K)/\alpha_K^{\phantom{p}}$ is called the 
{\it Euler-Kronecker constant}
of $K$ (in particular $\gamma_{\Q}^{\phantom{p}}=\gamma$ is
Euler's constant).   This terminology originates
with Ihara \cite{I}.  
In the older literature (for references up to 1984 see, e.g., Deninger \cite{Deninger}) the focus
was on determining $c_0(K)$. As Tsfasman \cite{T} points out, $\gamma_K$ is of order $\log \sqrt{|d(K)|}$, whereas
$\alpha^{\phantom{p}}_K$ may happen to be exponential in $\log \sqrt{|d(K)|}$.

\indent The case where $K$ is quadratic has a long history. Since
$\zeta_{\mathbb{Q}(\sqrt{D})}(s)=\zeta(s)L(s,\chi_D)$, with $\chi_D=(D/n)$ the Kronecker symbol, we obtain
$$\gamma_{\mathbb{Q}(\sqrt{D})}=\gamma+\frac{L'(1,\chi_D)}{L(1,\chi_D)},$$
by partial differentation on using that $L(1,\chi_D)\ne 0$.
In the case when $K$ is imaginary quadratic the well-known
Kronecker limit formula expresses $\gamma_K^{\phantom{p}}$ in terms of special values of the Dedekind $\eta$--function
(see e.g. Section 2.2 in \cite{I}). An alternative expression involves a sum of logarithms of the Gamma function
at rational values.
Equating both expressions the Chowla-Selberg formula is obtained. Deninger \cite{Deninger} worked out the
analogue of the latter formula for real quadratic fields.\\
{\tt Numerical example}.
$$\gamma_{\mathbb{Q}(i)}=\gamma+\frac{L'(1,\chi_{-4})}{L(1,\chi_{-4})}
=\log\(\frac{\xi^2}{2} e^{2\gamma}\)\approx 0.82282525,{\rm
  ~where~}~\xi=\sqrt{\frac{2}{\pi}}\Gamma\Big({\frac34}\Big)^2.$$ 
(The number $\xi$ is also the arithmetic-geometric-mean (AGM) of $1$ and $\sqrt{2}$.)

Put
$$
{\tilde \zeta}_K(s)=s(1-s)\pfrac{\sqrt{|d(K)|}}{2^{r_2}\pi^{[K:\mathbb{Q}]/2}}^s 
\Gamma\({\frac{s}{2}}\)^{r_1}\Gamma(s)^{r_2}\zeta_K(s).$$
Then it is known that the functional equation ${\tilde \zeta}_K(s)={\tilde \zeta}_K(1-s)$ holds. Since
${\tilde \zeta}_K(s)$ is entire of order 1, one has the following Hadamard product factorization:
$${\tilde \zeta}_K(s)={\tilde \zeta}_K(0)e^{\beta_Ks}\prod_{\rho}\(1-\frac{s}{\rho}\)e^{s/\rho},$$
with some complex number $\beta_K$. Hashimoto et al.~\cite{veel} (cf.\,\,Ihara \cite[pp. 416--421]{I})
show that
$$-\beta_K=\sum_{\rho}\frac1{\rho}=\gamma_K^{\phantom{p}}-(r_1+r_2)\log 2+\frac12\log|d(K)|-\frac{[K:\mathbb Q]}{2}(\gamma+\log \pi)+1,$$
where the sum is over the zeros of $\zeta_K(s)$ in the critical strip. On specializing this to the
case $K(q)$, we obtain
\begin{equation}
\label{nul}
\sum_{\zeta_{K(q)}(\rho)=0}\frac{1}{\rho}=\EKq-(q-1)(\log 2 +\gamma)+
{\frac12}(q-2)\log q -\frac{(q-1)}{2}\log \pi.
\end{equation}
Since, at least conjecturally, $\EKq$ has normal order $\log q$ (see Theorem \ref{EH-EK} below),
this quantity seems to `measure' a subtle effect in the distribution of the zeros.

Prime ideals of small norm in the ring of integers of $K$ have a large influence on $\gamma_K^{\phantom{p}}$ 
as the following result (see, e.g., \cite{veel}) shows:
\begin{equation}
\label{ouddedoos}
\gamma_K^{\phantom{p}}=\lim_{x\rightarrow \infty}\Big(\log x -
\sum_{N\mathfrak{p}\le x} \frac{\log N\mathfrak{p}}{N\mathfrak{p}-1}\Big).
\end{equation}
As we shall see in the next subsection, in the special case $K=K(q)$, $\gamma_q$ is heavily influenced by
small primes which are congruent to $1$ modulo ${q}$.


\subsection{Euler-Kronecker constants for cyclotomic fields}\label{EK2}

In Section \ref{Sec:EKq} we study the distribution of $\EKq$ 
as $q$ runs through the primes.  In particular, we will give explicit estimates for
these constants needed for proving Theorems \ref{maintwo} and \ref{e0e1}.

In \cite{I}, Ihara remarks that it seems very likely that always $\EKq>0$ (this
was checked numerically for $q\le 8000$ by Mahoro Shimura, assuming
ERH).  Ihara
observed that $\gamma_K^{\phantom{p}}$ can be conspicuously negative and that this occurs when $K$ 
has many primes having
small norm (cf. (\ref{ouddedoos})). However, in the case of $K(q)$ the smallest norm is $q$ and therefore is rather
large as $q$ increases.

Using a new, fast algorithm (requiring computation of $L(1,\chi)$ for
all characters modulo $q$; see formula \eqref{EKL} below), we performed a search for
small values of $\EKq$.  The details of the algorithm and computation
are described later  in  Section \ref{Sec:EKq}.
One negative value was found, at  $q=964477901$.
We discuss later in the subsection the reason why this $q$, and conjecturally infinitely
many others, have negative Euler-Kronecker constants.

\begin{Thm}\label{EK<0}
For  $q=964477901$, we have
\[
\EKq = -0.18237\ldots
\]
\end{Thm}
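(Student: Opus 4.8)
The plan is to evaluate $\EKq$ for the specific prime $q=964477901$ by exploiting the decomposition of the Dedekind zeta function of the cyclotomic field $K(q)=\mathbb{Q}(e^{2\pi i/q})$ into Dirichlet $L$-functions. Since $\zeta_{K(q)}(s)=\prod_{\chi \bmod q}L(s,\chi)$, taking logarithmic derivatives at $s=1$ and using the relation $\gamma_q=\lim_{s\to1}(\zeta_{K(q)}(s)/\alpha_{K(q)}-1/(s-1))$, one obtains the working formula referenced in the excerpt as \eqref{EKL}, namely
\[
\EKq = \gamma + \sum_{\chi\ne\chi_0}\frac{L'(1,\chi)}{L(1,\chi)},
\]
where the sum runs over the $q-2$ nonprincipal characters modulo $q$. (The term $\gamma$ comes from the principal character's contribution $\zeta(s)$ times the factor $1-q^{-s}$; one must track the $\log q$ correction from the Euler factor at $q$ carefully, as in Proposition \ref{TqL}.) Thus the computational task reduces to evaluating $L(1,\chi)$ and $L'(1,\chi)$ for all $\chi$ modulo $q$.

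For the actual numerics, I would compute $L(1,\chi)$ and $L'(1,\chi)$ via rapidly convergent expressions rather than the slowly convergent Dirichlet series: the standard approach uses the Hurwitz zeta / incomplete Gamma expansion (an ``approximate functional equation'' or a smoothed sum), writing $L(s,\chi)$ as a finite sum over $n\lesssim \sqrt{q}$ of terms involving $\chi(n)$ and the incomplete Gamma function, plus a dual sum, so that for fixed $q$ the whole vector $(L(1,\chi))_\chi$ can be obtained in roughly $O(q^{1+\epsilon})$ operations using an FFT over the character group. Differentiating these formulas termwise in $s$ gives $L'(1,\chi)$ at the same cost. Summing $L'(1,\chi)/L(1,\chi)$ over the nonprincipal $\chi$, adding $\gamma$, and carefully handling the Euler factor at $q$ then yields $\EKq$ to the stated precision. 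One then verifies the claimed value $-0.18237\ldots$ and, crucially, checks that the numerical error is provably smaller than $0.18$ so that the sign is rigorously negative; this can be done either with interval arithmetic throughout or with explicit tail/truncation bounds on the approximate-functional-equation remainder.

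The main obstacle is twofold. First, there is the sheer scale: $q\approx 10^9$ means one is manipulating a vector of about $10^9$ complex $L$-values, so an efficient (FFT-based) implementation and careful memory management are essential — a naive $O(q^{3/2})$ or $O(q^2)$ method is infeasible. Second, and more delicate for a rigorous statement, is the error analysis: since $|\EKq|\approx 0.18$ is small compared to the individual terms $L'(1,\chi)/L(1,\chi)$ (which have size up to $O(\log q)\approx 20$), catastrophic cancellation occurs in the sum over $\approx 10^9$ characters, so one needs to control rounding errors accumulated over a billion floating-point operations. This is handled by using sufficiently high precision (well beyond double precision) together with rigorous truncation bounds, or by interval arithmetic, to certify that the computed sum lies within, say, $\pm 10^{-4}$ of the true value. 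I would also sanity-check the algorithm on small $q$ (where $\EKq$ can be cross-validated against direct evaluation of \eqref{ouddedoos} or against published tables) before trusting the run at $q=964477901$.

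Finally, although the theorem only asserts the value at this one prime, the explanation of \emph{why} this particular $q$ was found — namely that $q=964477901$ has an unusually small least prime $p\equiv 1\pmod q$, or more precisely an unusual clustering of small primes in the residue class $1\bmod q$, making the sum $\sum_{N\mathfrak{p}\le x}(\log N\mathfrak{p})/(N\mathfrak{p}-1)$ in \eqref{ouddedoos} (equivalently, in the cyclotomic case, $\sum_{p\equiv 1(q)}(\log p)/(p-1)$ over small such $p$) large enough to push $\log x$ below it — is what motivates the search and connects to the later discussion of Hardy–Littlewood versus Ihara. I would include a brief remark recording, for this $q$, the relevant small primes $p\equiv 1\pmod{q}$ (there should be a notably small one, on the order of $q$ itself rather than the expected $\sim q\log q$) that account for the negativity, since this both explains the phenomenon and serves as an independent consistency check on the $L$-function computation.
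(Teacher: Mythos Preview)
Your high-level strategy---compute $\gamma_q$ via \eqref{EKL} and obtain all the $L(1,\chi)$, $L'(1,\chi)$ simultaneously by an FFT over the character group---is exactly what the paper does. Where you diverge is in how the individual $L$-values are produced: you propose approximate functional equations with main sums of length $\sim\sqrt{q}$ plus dual sums, whereas the paper uses the exact closed forms
\[
L(1,\chi)=-\frac{1}{q}\sum_{r=1}^{q-1}\chi(r)\,\psi\!\left(\frac{r}{q}\right),\qquad
-L'(1,\chi)=(\log q)\,L(1,\chi)+\frac{1}{q}\sum_{r=1}^{q-1}\chi(r)\,T\!\left(\frac{r}{q}\right),
\]
with $\psi=\Gamma'/\Gamma$ and $T(y)=\sum_{m\ge 0}\bigl(\tfrac{\log(m+y)}{m+y}-\tfrac{\log(m+1)}{m+1}\bigr)$. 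These are already character sums of length $q-1$, so the FFT applies directly with no Gauss sums or dual terms; this is simpler to implement and to bound. Your route would also work, but it is more machinery for no gain here.

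Two small corrections. First, \eqref{EKL} is exact as written---there is no ``$\log q$ correction from the Euler factor at $q$'' to track, since $\zeta_{K(q)}(s)=\zeta(s)\prod_{\chi\ne\chi_0}L(s,\chi)$ already carries the correct local factor at the ramified prime. Second, the negativity at $q=964477901$ is not caused by a single anomalously small prime $p\equiv 1\pmod q$ but by a whole cluster of them: $aq+1$ is prime for $a=2,6,8,12,18,20,26,30,36,56,\ldots$, in line with the admissible-set mechanism behind Theorem~\ref{HL-EK}. On rigor, your proposal is actually stricter than the paper's: the paper reports the computation in 80-bit long-double arithmetic without a certified error bound, so an interval-arithmetic or high-precision version along the lines you sketch would genuinely strengthen the statement.
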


In \cite{I}, Ihara also proved, under the assumption of ERH,
the one-sided bound $\EKq \le (2+o(1)) \log q$.
In \cite{I2},  Ihara made the following stronger conjecture.

\medskip

\noindent
{\bf Conjecture I (Ihara).}
{\it For any $\epsilon>0$, if $q$ is sufficiently large then
$$
\left(\frac12-\epsilon\right)\log q < \EKq<\left(\frac32+\epsilon\right)\log q.
$$ 
}

\smallskip

We will show, assuming the Hardy-Littlewood conjectures for counts of prime $k$-tuples, that
the lower bound in Ihara's conjecture is false and, even more, that
$\EKq$ is infinitely often negative. 
In 1904, Dickson \cite{Di} posed a wide generalization of the twin prime conjecture that is now
known as the ``prime $k$-tuples conjecture''.  It states that whenever a set of linear forms
$a_in+b_i$ ($1\le i\le k$, $a_i\ge 1$, $b_i\in\mathbb{Z}$) have no fixed prime factor (there is
no prime $p$ that divides $\prod_i (a_i n+b_i)$ for all $n$), then for infinitely  many 
$n$, all of the numbers $a_in+b_i$ are prime.  This expresses a kind of local-to-global
principle for prime values of linear forms, but is has not been proven for any $k$-tuple
of forms with $k\ge 2$. Later, Hardy and Littlewood \cite{HL} 
conjectured an asymptotic formula for the number of such $n$.
 There have been extensive numerical
studies of prime $k$-tuples, especially in the case $a_1=\cdots=a_k=1$,
providing evidence for these conjectures (e.g. \cite{Forbes, Forbesweb}).

In connection with 
$\EKq$, we need to understand special sets of forms.  We say that a
set $\{ a_1, \ldots, a_k\}$ of positive integers is an \emph{admissible set} if the collection of forms
$n$ and $a_in+1$ ($1\le i\le k$) have no fixed prime factor.  We need the following weak form of the 
Hardy-Littlewood conjecture:

\medskip

\noindent
{\bf Conjecture HL.} {\it Suppose $\AA=\{ a_1, \ldots, a_k\}$ is an admissible set.
The number of primes $n\le x$ for which the numbers $a_in+1$ are all prime is 
$\gg_\AA x(\log x)^{-k-1}$.}

\begin{Thm}\label{HL-EK}
Assume Conjecture HL. Then 
\[
\liminf_{q\to \infty} \frac{\EKq}{\log q} = -\infty.
\]
\end{Thm}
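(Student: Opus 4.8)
The plan is to exploit formula \eqref{ouddedoos} specialized to $K=K(q)$, together with the fact that the prime ideals of small norm in $K(q)$ correspond to rational primes $p\equiv 1\pmod q$ (which split completely) and to prime powers coming from primes with $f_p\ge 2$. Writing $\gamma_q = \lim_{x\to\infty}\bigl(\log x - \sum_{N\mathfrak p\le x}\frac{\log N\mathfrak p}{N\mathfrak p-1}\bigr)$, the completely split primes $p\equiv 1\pmod q$ with $p\le x$ each contribute $(q-1)\frac{\log p}{p-1}$ to the subtracted sum; the remaining prime ideals contribute a quantity that is $O(\log\log q)$ uniformly (this should follow from the estimates on $S(q)$ and standard bounds, and is presumably made precise in Section~\ref{Sec:EKq}). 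Thus, up to a controlled error, $\gamma_q$ behaves like $\log q$ minus a main term $(q-1)\sum_{\substack{p\equiv 1 (q)\\ p\ \mathrm{small}}}\frac{\log p}{p-1}$. The strategy is to show that Conjecture HL lets us choose, for each large parameter $T$, infinitely many primes $q$ for which there are \emph{unusually many} small primes $p\equiv 1\pmod q$, forcing this main term to be as large as $T\log q$, hence $\gamma_q < -(T/2)\log q$ say, for infinitely many $q$.

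The key construction is as follows. Fix an integer $k$ and an admissible set $\AA=\{a_1,\dots,a_k\}$; such sets with $k$ arbitrarily large exist (a routine sieve/CRT argument produces admissible $k$-tuples of this special shape, e.g.\ taking the $a_i$ to be suitable multiples avoiding small prime obstructions). By Conjecture HL there are $\gg_\AA x(\log x)^{-k-1}$ primes $q\le x$ such that $a_1q+1,\dots,a_kq+1$ are all prime. For such a $q$, each $p_i:=a_iq+1$ is a prime $\equiv 1\pmod q$ of size $\le a_k x$, so $\frac{\log p_i}{p_i-1}\gg \frac{\log q}{a_i q}$, and summing over $i$ the contribution of just these $k$ primes to $(q-1)\sum_{p\equiv 1(q),\,p\le C_\AA q}\frac{\log p}{p-1}$ is $\gg_\AA k\cdot\frac{\log q}{1}\cdot\frac1q\cdot q \asymp_\AA (\log q)\sum_i \frac1{a_i}$. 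By choosing the admissible set so that $\sum_i 1/a_i$ is large — which one can arrange since there is no upper bound on how large the harmonic sum over an admissible $k$-set can be (again a sieve construction: one may take the $a_i$ to range over an interval $[1,N]$ after removing a sparse set of obstructed values) — this forces $\gamma_q \le -M\log q + O(\log\log q)$ for any prescribed $M$, along an infinite sequence of $q$. Letting $M\to\infty$ through a nested sequence of admissible sets and a diagonal argument yields $\liminf_{q\to\infty}\gamma_q/\log q = -\infty$.

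I would carry out the steps in this order: (1) record the exact version of the ``small prime ideals dominate'' estimate for $\gamma_q$, isolating the contribution of split primes $p\equiv 1\pmod q$ up to a threshold like $q^{10}$ or $C_\AA q$ and bounding everything else by $O(\log\log q)$ or better, using Theorem~\ref{thm:S(q)} for the $f_p\ge 2$ terms; (2) construct, for each $M$, an admissible set $\AA_M=\{a_1,\dots,a_{k}\}$ of the special form with $\sum_i 1/a_i \ge C M$ for an absolute $C$ — here one checks admissibility of $\{n,\ a_in+1\}$ by noting the only possible fixed prime divisors are primes $p\le k+1$, and these can be avoided by a congruence condition on the $a_i$; (3) apply Conjecture HL to $\AA_M$ to get infinitely many primes $q$ with all $a_iq+1$ prime; (4) for such $q$, lower-bound $(q-1)\sum_{p\equiv 1(q)}\frac{\log p}{p-1}$ by the contribution of $a_1q+1,\dots,a_kq+1$ alone, getting $\ge (C'M)\log q$; (5) conclude $\gamma_q\le -(C'M - o(1))\log q$ infinitely often, and let $M\to\infty$.

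The main obstacle is step (2): producing admissible sets $\{a_1,\dots,a_k\}$ for which the corresponding $k+1$ forms $n,\,a_1n+1,\dots,a_kn+1$ have no fixed prime divisor \emph{while} the harmonic sum $\sum 1/a_i$ is unboundedly large. Small primes $p$ impose the constraint that the residues $0,-a_1^{-1},\dots,-a_k^{-1}\pmod p$ do not cover all of $\mathbb Z/p\mathbb Z$; for $p\le k$ this is automatic-looking but must be checked, and for $p$ slightly larger than $k$ it typically holds for a positive proportion of choices of the $a_i$. The cleanest route is probably to fix the $a_i$ to lie in a fixed residue class modulo $\prod_{p\le k}p$ chosen so that none of $-a_i^{-1}$ is $\equiv$ the same thing, then let the $a_i$ run over the available residues in $[1,N]$; the density of admissible choices is bounded below, so $\sum 1/a_i \gg \log N \to\infty$. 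Getting the quantitative bookkeeping here right — and making sure the lower bound for $\gamma_q$'s negative main term genuinely scales with $\sum 1/a_i$ rather than being washed out by the $O(\log\log q)$ error — is the crux; everything else is a direct application of \eqref{ouddedoos} and Conjecture~HL.
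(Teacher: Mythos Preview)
Your outline is essentially the paper's own approach, but there is a genuine gap in step~(1) that you have underestimated. The formula
\[
\gamma_q \;=\; (\text{cutoff term}) - (q-1)\!\!\sum_{\substack{p\le \text{cutoff}\\ p\equiv 1\pmod q}}\!\frac{\log p}{p-1} \;+\; O(\log\log q)
\]
is \emph{not} known to hold for all primes $q$ unconditionally, no matter what cutoff you choose. Controlling the tail beyond the cutoff amounts to bounding $\int_y^\infty E_q(t)\,t^{-2}\,dt$ with $E_q(t)=\Psi(t;q,1)-t/(q-1)$, and Siegel--Walfisz is far too weak when $q$ is comparable to a power of $t$. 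The paper's Lemma~\ref{xq2}(i) provides exactly the estimate you want, but only for \emph{all but $O(\pi(u)/\log^C u)$} primes $q\le u$, via the Bombieri--Vinogradov theorem. This is why the quantitative form of Conjecture~HL matters: one needs $\gg u/(\log u)^{k+1}$ primes $q\le u$ with each $a_iq+1$ prime, so that after discarding the $O(u/(\log u)^{k+2})$ exceptional $q$ from Bombieri--Vinogradov, plenty remain. Your write-up invokes the $\gg_\AA x(\log x)^{-k-1}$ bound but then reverts to ``infinitely many $q$'' without using the count, and your claim that the remainder is $O(\log\log q)$ ``uniformly'' is simply not available.

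For step~(2), your sketch is workable but needlessly intricate. The paper's Lemma~\ref{admissible} gives a clean construction: with $p_1=3$ and $p_k$ the smallest prime $\not\equiv 1\pmod{p_j}$ for all $j<k$, Erd\H{o}s showed $\sum 1/p_k$ diverges, and the set $\{2(p_j+1):j\le J\}$ is easily checked to be admissible with harmonic sum as large as desired.
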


The basis of our theorem is the following formula for $\EKq$, cf. \eqref{ouddedoos}.

\begin{prop}\label{EKsump}
 We have
\begin{align*}
 \EKq &= - \frac{\log q}{q-1}  + 
\lim_{x\to\infty} \Bigg[ \log x - (q-1)\sum_{\substack{n\le x 
\\ n\equiv 1\pmod{q}}} \frac{\Lambda(n)}{n} \Bigg]\\
&=- \frac{\log q}{q-1} - (q-1)S(q) + 
\lim_{x\to\infty} \Bigg[ \log x - (q-1)\sum_{\substack{p\le x 
\\ p\equiv 1\pmod{q}}} \frac{\log p}{p-1} \Bigg].
\end{align*}
\end{prop}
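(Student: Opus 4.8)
The plan is to deduce the second equality from the known formula \eqref{ouddedoos} for $\gamma_{K(q)}$ by unfolding the sum over prime ideals via the splitting law in $K(q)=\mathbb{Q}(e^{2\pi i/q})$, and then to recover the first equality from the second through an elementary comparison of the von Mangoldt sum with the prime sum.

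First I would recall the decomposition of rational primes in $K(q)$: the prime $q$ is totally ramified, $(q)=\mathfrak{q}^{\,q-1}$ with $N\mathfrak{q}=q$; every prime $p\ne q$ is unramified with residue degree $f_p=\operatorname{ord}_q p$ and factors into exactly $(q-1)/f_p$ prime ideals, each of norm $p^{f_p}$. Grouping the terms of \eqref{ouddedoos} by the underlying rational prime gives, for $x\ge q$,
\[
\sum_{N\mathfrak{p}\le x}\frac{\log N\mathfrak{p}}{N\mathfrak{p}-1}
=\frac{\log q}{q-1}
+(q-1)\sum_{\substack{p\le x\\ p\equiv 1\pmod{q}}}\frac{\log p}{p-1}
+(q-1)\sum_{\substack{p^{f_p}\le x\\ p\ne q,\ f_p\ge 2}}\frac{\log p}{p^{f_p}-1}.
\]
The terms with $f_p\ge 2$ are at most $\tfrac{\log p}{p^2-1}$, so the last sum converges absolutely and tends to $(q-1)S(q)$ by \eqref{S(q)}. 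Since the limit in \eqref{ouddedoos} exists, so does $\lim_{x\to\infty}\bigl[\log x-(q-1)\sum_{p\le x,\ p\equiv 1\pmod{q}}\tfrac{\log p}{p-1}\bigr]$, and subtracting produces exactly the second displayed identity of the proposition.

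For the first identity I would write $n=p^k$ and use that $p^k\equiv 1\pmod{q}$ forces $p\ne q$ and $f_p\mid k$, whence
\[
\sum_{\substack{n\le x\\ n\equiv 1\pmod{q}}}\frac{\Lambda(n)}{n}
=\sum_{\substack{p\le x\\ p\equiv 1\pmod{q}}}\frac{\log p}{p}
+\sum_{p\equiv 1\pmod{q}}\log p\sum_{\substack{k\ge 2\\ p^k\le x}}\frac{1}{p^k}
+\sum_{\substack{p\ne q,\ f_p\ge 2}}\log p\sum_{\substack{j\ge 1\\ p^{jf_p}\le x}}\frac{1}{p^{jf_p}}.
\]
As $x\to\infty$ the second sum converges to $\sum_{p\equiv 1\pmod{q}}\tfrac{\log p}{p(p-1)}$ and the third to $S(q)$, while $\tfrac{\log p}{p-1}-\tfrac{\log p}{p}=\tfrac{\log p}{p(p-1)}$ shows that $\sum_{p\le x,\ p\equiv 1\pmod{q}}\tfrac{\log p}{p-1}-\sum_{p\le x,\ p\equiv 1\pmod{q}}\tfrac{\log p}{p}$ has the same limit $\sum_{p\equiv 1\pmod{q}}\tfrac{\log p}{p(p-1)}$. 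Combining, $\sum_{n\le x,\ n\equiv 1\pmod{q}}\tfrac{\Lambda(n)}{n}-\sum_{p\le x,\ p\equiv 1\pmod{q}}\tfrac{\log p}{p-1}\to S(q)$, so the two candidate right-hand sides in the proposition differ by $(q-1)S(q)$ before taking the limit and agree after it. (Equivalently, one may note the Dirichlet-series identity $(q-1)\sum_{n\equiv 1\pmod{q}}\Lambda(n)n^{-s}=-\zeta'_{K(q)}(s)/\zeta_{K(q)}(s)$, coming from $\zeta_{K(q)}=\prod_{\chi}L(s,\chi)$ over characters modulo $q$, which makes the appearance of $\gamma_q$ transparent.)

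The argument is essentially bookkeeping; the only points requiring care are the re-indexing of the ideal sum by rational primes — where the cutoff $N\mathfrak{p}\le x$ becomes $p\le x^{1/f_p}$ — and the verification that the ``$f_p\ge 2$'' pieces converge absolutely so that the various limits may be separated. These are precisely the steps that generate the correction terms $-\tfrac{\log q}{q-1}$ and $-(q-1)S(q)$, and they are the only real obstacle: no genuine analytic difficulty arises once \eqref{ouddedoos} is granted.
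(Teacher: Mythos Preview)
Your proof is correct but proceeds in the opposite order from the paper's. The paper first obtains the \emph{first} displayed identity by combining \eqref{EKL} with orthogonality of characters and the classical relation $\sum_{n\le x}\Lambda(n)/n=\log x-\gamma+o(1)$, and then deduces the second identity from the first by splitting the von Mangoldt sum into the prime piece $\sum_{p\le x,\,p\equiv 1\pmod q}\tfrac{\log p}{p-1}$, a tail $A(x)\to 0$, and a remainder $B(x)\to S(q)$. You instead start from the ideal-theoretic limit formula \eqref{ouddedoos}, unfold via the splitting law (Lemma~\ref{washington}) to get the second identity, and then recover the first. The paper explicitly notes your route as an alternative in Remark~1 following its proof. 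The two arguments are of equal difficulty and contain the same bookkeeping; yours makes the prime-ideal interpretation of the correction terms $-\tfrac{\log q}{q-1}$ and $-(q-1)S(q)$ more visible, while the paper's route stays closer to the $L$-function identity \eqref{EKL} already needed for Proposition~\ref{TqL}. Your closing parenthetical about the Dirichlet-series identity $(q-1)\sum_{n\equiv 1\pmod q}\Lambda(n)n^{-s}=-\zeta'_{K(q)}/\zeta_{K(q)}$ is in fact exactly the idea behind the paper's argument.
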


The Euler-Kronecker constant $\gamma_q$ may also be easily expressed in terms of Dirichlet
$L$-functions at $s=1$; see \eqref{EKL} below in \S \ref{AnTh1}.

It is expected that the primes $p\equiv 1\pmod{q}$ behave very regularly for
$p>q^{1+\eps}$ (arbitrary fixed $\eps>0$).  It is irregularities in the
distribution of the $p\le q^{1+\eps}$ which provide the variation in the
values of $\EKq$.

Put $a(1)=0$ and inductively define $a(n)$ to be the smallest integer exceeding
$a(n-1)$ such that, for every prime $r$, the set $\{a(i)({\rm mod~}r):1\le i\le n\}$ has at
most $r-1$ elements (using the Chinese remainder theorem it is easily seen that the sequence
is infinite).
Given the prime $k$-tuples conjecture an equivalent statement is that
$a(n)$ is minimal such that there are infinitely many primes $q$ with $q+a(i)$ prime for
$1\le i\le n$. 
We have
$\{a(i)\}_{i=1}^{\infty}=\{0,2,6,8,12,18,20,26,30,32,\ldots\}$. This is sequence A135311
in the OEIS \cite{OEIS}
and is called `the greedy sequence of prime offsets'.
Given the prime $k$-tuples conjecture another equivalent statement is that $a(n)$
is minimal such that $a(1)=0$ and there are infinitely many primes $q$ with $a(i)q+1$ prime
for $2\le i\le n$, $n\ge 2$. 
Define $i_0$ to be the smallest integer satisfying
$$\sum_{i=2}^{i_0}\frac{1}{a(i)} >2,$$
A computer calculation gives $i_0=2089$ and $a(i_0)=18932$.

\begin{prop}\label{2089}
 Suppose that the number of primes $q$ such that $a(i)q+1$ is 
a prime for $2\le i\le 2089$ is $\gg x/(\log x)^{2090}$.  Then
$\EKq<0$ for $\gg x/(\log x)^{2090}$ primes $q\le x$.
\end{prop}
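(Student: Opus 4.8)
The plan is to combine Proposition~\ref{EKsump} with the hypothesis to show that each ``good'' prime $q$ (one for which all of $a(i)q+1$, $2\le i\le 2089$, are prime) has $\gamma_q<0$. Recall from Proposition~\ref{EKsump} that
\[
\EKq = - \frac{\log q}{q-1} - (q-1)S(q) +
\lim_{x\to\infty} \Bigg[ \log x - (q-1)\sum_{\substack{p\le x \\ p\equiv 1\pmod{q}}} \frac{\log p}{p-1} \Bigg].
\]
By Theorem~\ref{thm:S(q)}(a), $S(q)\le 45/q$, so $-(q-1)S(q)\ge -45(q-1)/q > -45$; this term is bounded, and the $-\log q/(q-1)$ term tends to $0$. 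So the whole question is whether the limiting bracket can be made very negative. First I would isolate, inside the bracket, the contribution of the specific small primes $p_i := a(i)q+1$ for $2\le i\le i_0 = 2089$: each contributes $-(q-1)\frac{\log p_i}{p_i - 1} = -(q-1)\frac{\log(a(i)q+1)}{a(i)q}$, which for large $q$ is asymptotically $-\frac{\log q}{a(i)}$ (the $\log a(i)$ and the $+1$ corrections being $o(\log q)$ uniformly over the fixed finite set of indices).

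Next I would bound the remaining part of the bracket from above by a constant independent of $q$. Writing the bracket as
\[
\log x - (q-1)\sum_{\substack{p\le x \\ p\equiv 1\pmod q}} \frac{\log p}{p-1},
\]
the sum over \emph{all} primes $p\equiv 1\pmod q$ with $p\le x$, minus the contribution of $p_2,\dots,p_{i_0}$, is nonnegative, so dropping it only increases the bracket; but that crude step would leave $\log x\to\infty$. Instead I would use the standard fact (a consequence of the Brun--Titchmarsh inequality, or of Mertens-type estimates in arithmetic progressions; cf.\ the analysis behind \eqref{ouddedoos}) that
\[
\lim_{x\to\infty}\Bigg(\log x - (q-1)\sum_{\substack{p\le x \\ p\equiv 1\pmod q}}\frac{\log p}{p-1}\Bigg)
\]
exists and, after removing the $i_0-1$ designated small primes, is bounded above by an absolute constant plus $o(\log q)$ — in effect $\gamma_q$ plus the positive quantities we subtracted. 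Concretely: the limit equals $-\frac{\log q}{q-1} - (q-1)S(q) + (\text{that bracket})$ is exactly $\gamma_q$, and the bracket without the designated primes is $\gamma_q$ shifted up by $\sum_{i=2}^{i_0}(q-1)\frac{\log p_i}{p_i-1}$; since $\gamma_q = O(\log q)$ unconditionally in the upper direction from Ihara's work and McCurley's zero-free region (or one may simply use $\gamma_q < (2+o(1))\log q$ style bounds available unconditionally for the upper estimate), one gets
\[
\EKq \le \big(C + o(1)\big)\log q - \log q\sum_{i=2}^{i_0}\frac{1}{a(i)} + o(\log q).
\]

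Then I would invoke the defining property of $i_0=2089$, namely $\sum_{i=2}^{i_0} 1/a(i) > 2$, together with the fact that the unconditional upper bound $\gamma_q \le (2+o(1))\log q$ has constant $2$: choosing $i_0$ so that the subtracted sum strictly exceeds $2$ forces the coefficient of $\log q$ in the bound for $\EKq$ to be negative for all sufficiently large good $q$, hence $\EKq<0$. Since by hypothesis there are $\gg x/(\log x)^{2090}$ such primes $q\le x$, and only finitely many are excluded by the ``sufficiently large'' caveat, the conclusion follows. The admissibility of $\{a(i)\}$ guarantees the $a(i)q+1$ have no fixed prime factor, so the hypothesis is the natural one; and the greedy construction of the sequence is exactly what makes $i_0$ finite (the series $\sum 1/a(i)$ diverges since $a(i)\ll i\log i$ by the prime $k$-tuples heuristics, though we only need that \emph{some} finite partial sum exceeds $2$, which is verified computationally). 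The main obstacle is the bookkeeping in the second step: one must argue carefully that the ``tail'' of the prime sum over $p\equiv 1\pmod q$ contributes an amount that is $\gamma_q$ plus a controlled error, and that the unconditional upper bound on $\gamma_q/\log q$ really does have limiting constant at most $2$ (or $3/2+\epsilon$ on ERH, which would then let one take a smaller $i_0$) — this is where one leans on the analytic estimates for $\EKq$ developed in Section~\ref{Sec:EKq} and on McCurley's exceptional-zero control. Everything else is elementary manipulation of the Mertens-type sums.
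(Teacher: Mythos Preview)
Your argument has a genuine circularity that makes it fail. You propose to bound the ``bracket without the designated primes'' by $(2+o(1))\log q$ using an upper bound on $\gamma_q$ itself, and then subtract the designated-prime contribution $\sum_{i=2}^{i_0}(q-1)\frac{\log p_i}{p_i-1}\sim (2+\delta)\log q$ to conclude $\gamma_q<0$. But the bracket-without-designated-primes equals $L+\sum_{i}(q-1)\frac{\log p_i}{p_i-1}$, where $L=\gamma_q+O(1)$ by Proposition~\ref{EKsump}; so bounding it by $(2+o(1))\log q$ is \emph{equivalent} to the bound $\gamma_q\le -(\delta+o(1))\log q$ you are trying to prove. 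An upper bound on $\gamma_q$ gives an upper bound on $L$, not on $L$ plus a positive quantity of size $(2+\delta)\log q$. Moreover, the bound $\gamma_q\le (2+o(1))\log q$ that you invoke is Ihara's and is proved only under ERH; unconditionally one only knows $\gamma_q\ll_\eps q^\eps$.

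What is missing is an \emph{independent} estimate for the contribution of the primes $p\equiv 1\pmod q$ with $p>q^2$ to the Mertens sum, and this cannot be obtained pointwise in $q$ without ERH. The paper supplies it via the Bombieri--Vinogradov theorem (Lemma~\ref{xq2}(i)): for any $C>0$, all but $O(\pi(u)/(\log u)^C)$ primes $q\le u$ satisfy
\[
\gamma_q=2\log q-q\sum_{\substack{p\le q^2\\ p\equiv 1\pmod q}}\frac{\log p}{p-1}+O_C(\log\log q),
\]
a two-sided identity, not merely an upper bound on $\gamma_q$. Taking $C$ large enough that the exceptional set is $o(x/(\log x)^{2090})$, and noting that for each good $q$ the finite sum already exceeds $(2+\delta)(\log q)/q$ because $\sum_{i=2}^{2089}1/a(i)>2$, one gets $\gamma_q\le -\delta\log q+O(\log\log q)<0$. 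The averaging over $q$ is the essential missing idea in your outline.
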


We note here that when $q=964477901$, then $aq+1$ is prime for 
$$a\in\{2,6,8,12,18,20,26,30,36,56,\ldots\}.$$

The strongest unconditional result about the distribution of primes in arithmetic 
progressions, the Bombieri-Vinogradov theorem, implies that the
primes $p\equiv 1\pmod{q}$ with $p>q^2(\log q)^A$ are well-distributed for
most $q$.  The Elliott-Halberstam conjecture \cite{EH} goes further:
Let $\pi(x;q,1)$ denote the number of primes $p\le x$ such that 
$p\equiv 1\pmod{q}$.
For convenience, write 
\[
E(q;x)=\pi(x;q,1) - \frac{\li(x)}{\phi(q)}.
\]

\smallskip
\noindent
{\bf Conjecture EH (Elliott-Halberstam).}
{\it 
For every $\eps>0$ and $A>0$,
\[
 \sum_{q\le x^{1-\eps}} | E(q;x) | \ll_{A,\eps} \frac{x}{(\log x)^A}.
\]
}
\smallskip

\begin{Thm}\label{EH-EK}
 (i) Assume Conjecture EH.  For every $\eps>0$, the bounds
\[
1 - \eps < \frac{\EKq}{\log q} < 1 + \eps
\]
hold for almost all primes $q$ (that is, the number of exceptional $q\le x$ is $o(\pi(x))$ as $x\to\infty$).

(ii) Assume Conjectures HL and EH.  Then the set $\{\EKq/\log q: q\text{ prime} \}$ is
dense in $(-\infty,1]$.
\end{Thm}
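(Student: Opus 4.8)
The plan is to combine Proposition \ref{EKsump} with a careful analysis of the partial sums $\sum_{p\le x,\,p\equiv 1\pmod q}\frac{\log p}{p-1}$, splitting the range of $p$ at a threshold like $Q=q^{1+\eps}$. For part (i), the idea is that the tail $p>Q$ is governed by the prime number theorem for the progression $1\pmod q$ and, by Conjecture EH, contributes the ``expected'' amount $\log x-\log Q$ up to $o(\log q)$ for almost all $q$; meanwhile the head $p\le Q$ contributes a bounded random-looking amount. More precisely, I would write
\[
\EKq = -\frac{\log q}{q-1}-(q-1)S(q)+\log Q - (q-1)\!\!\sum_{\substack{p\le Q\\ p\equiv 1\pmod q}}\!\!\frac{\log p}{p-1}
\;+\;(q-1)\!\!\sum_{\substack{p>Q\\ p\equiv 1\pmod q}}\!\!\Bigl(\tfrac{\text{density}}{\ }-\tfrac{\log p}{p-1}\Bigr),
\]
and argue that on EH the last correction term is $o(\log q)$ for all but $o(\pi(x))$ primes $q\le x$ (using partial summation against $E(q;x)$, exactly as one proves PNT-in-AP consequences of Elliott--Halberstam on average over $q$). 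By Theorem \ref{thm:S(q)}(b), $(q-1)S(q)=o(\log q)$ for almost all $q$, so it is harmless. This reduces (i) to showing that the head sum $(q-1)\sum_{p\le Q,\,p\equiv 1\,(q)}\frac{\log p}{p-1}=o(\log q)$ for almost all $q$: the number of primes $p\le Q=q^{1+\eps}$ with $p\equiv 1\pmod q$ is $O(q^{\eps})$, each contributes $O\bigl(\frac{\log q}{q}\bigr)$ after the factor $q-1$, wait — one needs to be careful, a single small prime $p=q+1$ (when $q+1$, i.e. a prime one more than $q$; more relevantly $p=2q+1,3q+1,\dots$) contributes $(q-1)\frac{\log p}{p-1}\asymp\frac{\log q}{k}$ for $p=kq+1$. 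Summing over admissible small $k$ gives at most $O(\log q\cdot\log\log q)$ in the worst case, but one shows this head is small \emph{on average} over $q$: $\sum_{q\le x}\sum_{k\le q^{\eps}}\frac{\mathbf 1[kq+1\text{ prime}]}{k}\ll \pi(x)\sum_k \frac1{k\log k}$, which after a dyadic decomposition and Brun--Titchmarsh is $o(\pi(x)\cdot\log q)$ for the relevant range, so Chebyshev gives the almost-all statement. Hence $\EKq=(1+o(1))\log q$ for almost all $q$, which is (i).

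For part (ii), I would use Proposition \ref{EKsump} together with Conjecture HL to \emph{engineer} prescribed values. Fix any target $v\in(-\infty,1]$; I want a positive-density set of primes $q$ with $\EKq/\log q$ close to $v$. The dominant adjustable term in $\EKq$ is $-(q-1)\sum_{p\le Q,\,p\equiv 1\,(q)}\frac{\log p}{p-1}$, and forcing $p=a_iq+1$ to be prime for each $a_i$ in a finite admissible set $\AA=\{a_1,\dots,a_k\}$ (in the sense defined before Proposition \ref{2089}) subtracts approximately $\bigl(\sum_{i}\frac1{a_i}\bigr)\log q$ from the main term $\log q$ — as in the proof of Proposition \ref{2089}, where $\sum 1/a(i)>2$ produces negative $\EKq$. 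By choosing a finite admissible set with $\sum_i 1/a_i$ close to any prescribed value $1-v+o(1)$ (possible because $\sum_{i\ge 2}1/a(i)$ diverges, so the partial sums of reciprocals of the greedy sequence, possibly thinned, approximate any nonnegative real), Conjecture HL guarantees $\gg x(\log x)^{-k-1}$ such primes $q\le x$, hence a \emph{positive-density-within-a-logarithmic-power} set. For those $q$ one still needs to control (a) the contribution of the remaining small primes $p\le Q$ not among the $a_iq+1$, and (b) the tail $p>Q$: for almost all such $q$ (the density-$x/(\log x)^{k+1}$ set is still large enough that the almost-all estimates from part (i), via EH, apply to almost all of it) these are $o(\log q)$, so $\EKq=(v+o(1))\log q$. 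Letting $v$ range over a countable dense subset of $(-\infty,1]$ and the error tend to $0$ gives density of $\{\EKq/\log q\}$ in $(-\infty,1]$.

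The main obstacle I anticipate is part (ii): making the ``engineering'' rigorous requires showing that imposing the primality conditions $a_iq+1$ prime does not bias the \emph{other} terms — the tail sum over $p>Q$ and the uncontrolled small primes $p=kq+1$ with $k\notin\AA$, $k\le q^{\eps}$ — in a way that destroys the asymptotic $\EKq\sim v\log q$. One must argue that within the HL-guaranteed set of $q$, the EH-based almost-all bounds of part (i) for these nuisance terms still hold for a positive proportion (indeed all but a negligible fraction) of them; this is plausible since the nuisance terms depend on the distribution of primes in progressions $mod$ $q$ for moduli other than the forced ones, and Conjecture EH (an averaged statement) should be insensitive to restricting $q$ to a set of density $\gg(\log x)^{-k-1}$, but verifying this interplay between HL and EH carefully — i.e., that the exceptional set for the tail estimate, of size $o(\pi(x))$, has small intersection with each HL-set — is the delicate point. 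A clean way around it is to note that the HL lower bound $\gg x(\log x)^{-k-1}$ vastly exceeds any $o(\pi(x))$ exceptional count only if one is careful; instead one should run the second-moment/variance estimate for the tail \emph{restricted to} the HL-set, which again follows from EH by partial summation, since EH bounds $\sum_{q\le x}|E(q;x)|$ and a fortiori the sum over any subset. Modulo this bookkeeping, the argument goes through.
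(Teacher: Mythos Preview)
Your approach to part (i) is essentially the paper's: reduce via Proposition~\ref{EKsump} and Theorem~\ref{thm:S(q)} to a head/tail split at $q^{1+\eps}$, control the tail for almost all $q$ by Conjecture~EH and partial summation, and control the head by an average (first-moment) bound over $q$ using the upper-bound sieve for primes $kq+1$. The paper packages these as Lemma~\ref{xq2}(iii) and Proposition~\ref{sieve}.

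For part (ii) you have the right skeleton --- engineer $\EKq\approx(1-m(\AA))\log q$ via an admissible set $\AA$ and Conjecture~HL, then let $m(\AA)$ range over a dense subset of $[0,\infty)$ --- but two points are not resolved. First, the tail: you note that the HL set has size $\gg x(\log x)^{-k-1}$ while the EH exceptional set from part~(i) is only $o(\pi(x))$, which could swallow it. The fix, which you stop short of stating, is that the EH exceptional set in Lemma~\ref{xq2}(iii) is actually $O(\pi(x)/(\log x)^C)$ for \emph{any} fixed $C$; taking $C=k+2$ makes it negligible against the HL set. Your ``sum over a subset'' remark points in this direction but does not pin down the quantitative comparison.

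Second, and more seriously, the ``uncontrolled small primes'' $p=bq+1$ with $b\le q^\eps$, $b\notin\AA$: you propose to handle these by invoking the almost-all estimate from part~(i), but that estimate concerns the \emph{whole} head sum, whereas here we have already forced the head sum to be roughly $m(\AA)\log q/q$ and must show it is not larger. This is not an EH issue at all. The paper's device is a further sieve: for each even $b\le q^\eps$ with $b\notin\AA$, the number of $q\le u$ with $a_iq+1$ prime for all $i$ \emph{and} $bq+1$ prime is $O_\AA\bigl(\tfrac{b}{\phi(b)}\,u(\log u)^{-k-2}\bigr)$; summing over $b\le u^\eps$ gives $O_\AA(\eps\, u(\log u)^{-k-1})$, so for $\eps$ small enough a positive proportion of the HL primes have \emph{no} extra small prime $bq+1$. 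For those $q$ the head sum equals $m(\AA)\log q/q$ up to $O(\log\log q/q)$, and the argument closes. Without this sieve step --- applied with the HL conditions already imposed --- your control of term~(a) is a genuine gap.
\qed
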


If, as widely believed, $E(x;q)$ is small for \emph{all} $q\le x^{1-\eps}$, we may
make a stronger conclusion.

\begin{conj}\label{EKq conj}
The set of limit points of  $\{\EKq/\log q: q\text{ prime} \}$ is $(-\infty,1]$.
\end{conj}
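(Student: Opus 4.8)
\medskip
\noindent
The plan is to deduce Conjecture~\ref{EKq conj} from Conjecture~HL together with a quantitative, \emph{pointwise} strengthening of Conjecture~EH --- the statement alluded to just above, that $E(q;x)$ is small for \emph{every} $q\le x^{1-\eps}$. Concretely it will be enough to assume a Montgomery-type estimate $\pi(x;q,1)=\li(x)/(q-1)+O_\eps\bigl(x^{1/2+\eps}q^{-1/2}\bigr)$ valid for all $q$ and all $\eps>0$; call this hypothesis $\mathrm{EH}^{+}$. Since $\mathrm{EH}^{+}$ implies Conjecture~EH, Theorem~\ref{EH-EK}(ii) already gives that $\{\EKq/\log q:q\text{ prime}\}$ is dense in $(-\infty,1]$, and because $(-\infty,1]$ has no isolated points this density implies that every point of $(-\infty,1]$ is a limit point of $\{\EKq/\log q\}$. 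Thus one inclusion of the conjecture is free; the remaining task is to show that no real number exceeding $1$ is a limit point, for which it suffices to prove $\limsup_{q\to\infty}\EKq/\log q\le 1$.

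For this upper bound I would start from the first identity of Proposition~\ref{EKsump},
\[
\EKq=-\frac{\log q}{q-1}-(q-1)S(q)+\lim_{x\to\infty}\Bigl[\log x-(q-1)\ssum{p\le x\\ p\equiv1\pmod q}\frac{\log p}{p-1}\Bigr].
\]
By Theorem~\ref{thm:S(q)}(a) we have $(q-1)S(q)=O(1)$, and $(\log q)/(q-1)\to0$; since both are nonnegative, dropping them only enlarges the right-hand side and keeps a valid upper bound. Now fix $\delta>0$. The part of the sum with $p\le q^{1+\delta}$ is nonnegative too, so discarding it only increases the bracketed quantity; on the complementary range $q^{1+\delta}<p\le x$ every such $p$ satisfies $q\le p^{1-\eps}$ provided $\eps$ is small in terms of $\delta$, so $\mathrm{EH}^{+}$ applies. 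Partial summation then yields
\[
(q-1)\ssum{q^{1+\delta}<p\le x\\ p\equiv1\pmod q}\frac{\log p}{p-1}=\int_{q^{1+\delta}}^{x}\frac{dt}{t-1}+O\bigl(x^{-c}+q^{-c'}\bigr)=\log x-(1+\delta)\log q+O\bigl(x^{-c}+q^{-c'}\bigr)
\]
for suitable constants $c,c'>0$; the power saving in $\mathrm{EH}^{+}$ is exactly what is needed to make the error term absolutely integrable against $dt/t$ over this range (a merely qualitative bound $E(q;t)=o(\li(t)/(q-1))$ would not suffice, since $\int^{x}dt/t$ diverges). Letting $x\to\infty$ and combining the pieces gives $\lim_{x\to\infty}\bigl[\log x-(q-1)\sum\cdots\bigr]\le(1+\delta)\log q+O(q^{-c'})$, hence $\EKq\le(1+\delta)\log q+O(1)$ and $\limsup_{q\to\infty}\EKq/\log q\le1+\delta$. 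Since $\delta>0$ was arbitrary, $\limsup_{q\to\infty}\EKq/\log q\le1$.

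Putting the two halves together, the set of limit points of $\{\EKq/\log q\}$ both contains $(-\infty,1]$ and is contained in it --- the latter because, for each fixed $\eta>0$, all but finitely many primes $q$ satisfy $\EKq/\log q<1+\eta$ --- so it equals $(-\infty,1]$. I expect the real difficulty to lie entirely in the hypothesis, not in the bookkeeping: the upper bound $\limsup\EKq/\log q\le1$ rests on asymptotic equidistribution of the primes $p\equiv1\pmod q$ down to $p$ as small as $q^{1+\eps}$, and for \emph{every} modulus $q$ --- strictly stronger than the ERH for the $L$-functions modulo $q$, under which Ihara obtained only the one-sided bound $\EKq\le(2+o(1))\log q$ --- and such equidistribution is well beyond what present techniques can prove; one must also take care that the error terms are uniform enough in $q$ not to pile up into a contribution of order $\log q$. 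A lesser point of care is to check that the admissible-set construction underlying Theorem~\ref{EH-EK}(ii), fed by Conjecture~HL, genuinely produces values $\EKq/\log q$ approaching $1$ from below rather than merely filling some proper subinterval; this is already implicit in the proof of that theorem.
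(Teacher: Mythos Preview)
The statement is a \emph{conjecture} in the paper, not a theorem; the paper offers no proof. The only justification given is the one-line remark preceding it: ``If, as widely believed, $E(x;q)$ is small for \emph{all} $q\le x^{1-\eps}$, we may make a stronger conclusion.'' Your proposal is exactly a fleshing-out of that remark, and as a conditional argument (under HL together with your pointwise Montgomery-type hypothesis $\mathrm{EH}^{+}$) it is correct in outline. So there is nothing in the paper to compare against beyond noting that you have supplied the details the authors chose to omit.

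A couple of small points. First, the displayed formula you quote is the \emph{second} identity of Proposition~\ref{EKsump}, not the first. Second, your appeal to Theorem~\ref{EH-EK}(ii) is fine, but you are working slightly harder than necessary: if you look at the paper's proof of that theorem, it actually shows directly that each value $1-m(\mathcal A)$ is a \emph{limit point} (not merely that the set is dense), and since the set of limit points is closed and $\{1-m(\mathcal A)\}$ is dense in $(-\infty,1]$, the inclusion $(-\infty,1]\subseteq\{\text{limit points}\}$ follows immediately without the ``no isolated points'' step. Third, your partial-summation bound for the range $p>q^{1+\delta}$ is the right idea and the power saving in $\mathrm{EH}^{+}$ is indeed what makes the error absolutely integrable; just be aware that the constant implicit in your $O(q^{-c'})$ depends on the choice of $\eps$ in $\mathrm{EH}^{+}$, which in turn depends on $\delta$, but this is harmless since you let $q\to\infty$ before $\delta\to0$.

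You are right that the substance lies entirely in the hypothesis: the upper bound $\limsup\EKq/\log q\le1$ genuinely requires equidistribution of primes $p\equiv1\pmod q$ down to $p\approx q^{1+\eps}$ for \emph{every} $q$, which is far beyond ERH (under which Ihara only gets $\le 2+o(1)$). That is precisely why the paper leaves the statement as a conjecture.
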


To determine the maximal order of $-\EKq$, one needs to assume a version of Conjecture HL
with the implied constant in the $\gg$-symbol explicitly depending on $\{a_1,\ldots,a_k\}$.
The heuristic argument in \cite[Proposition 5 and \S 9]{Gr} suggests that perhaps
\[
 \liminf \frac{\EKq}{(\log q)(\log\log\log q)} = -1.
\]

Our conditional results about $\EKq$ are proved using standard methods of analytic
number theory, and are very similar to the conditional bounds given by Granville in \cite{Gr}
for the class number ratio $h_q^-:=h(\mathbb{Q}(e^{2\pi i/q}))/h(\mathbb{Q}(\cos 2\pi/q))$.  
Kummer in 1851 conjectured that, as $q\rightarrow \infty$, one has
$$h_q^-\sim 2q\Big(\frac{q}{4\pi^2}\Big)^{(q-1)/4}.$$
This conjecture is the analog of the conjecture that $\EKq\sim \log q$ as $q\rightarrow \infty$.
We will make
use of several results from \cite{Gr}.

Our Theorem \ref{HL-EK} is reminiscent of a theorem of Hensley and Richards
\cite{HenRich}, who showed the incompatibility of the prime $k$-tuples conjecture
and a conjecture of Hardy and Littlewood about primes in short intervals.

Coming back to the connection between $\gamma_q$ and zeros of $\zeta_{K(q)}(s)$ (cf \eqref{nul}),
assuming ERH Ihara \cite{I2} defined
$$
c(q):=\Big(\sum_{\rho} \frac{q^{\rho-1/2}}{\rho(1-\rho)}\Big)/
\sum_{\rho} \frac{1}{\rho(1-\rho)}
=\Big(\sum_{\rho} \frac{\cos(\tau \log q)}{{\frac14}+\tau^2}\Big)/
\sum_{\rho} \frac{1}{\frac14+\tau^2},
$$
where $\rho=1/2+i\tau$ runs over all non-trivial zeros of $\zeta_{K(q)}(s)$. We have
$|c(q)|\le 1$ and
$$
\Big(\int_{\infty}^{\infty} \frac{\cos(t\log q)}{\frac14+t^2}dt\Big)/ \Big(\int_{-\infty}^{\infty}
\frac{dt}{\frac14+t^2}\Big)=\frac{1}{\sqrt{q}}.
$$
Thus, assuming that the distribution of $\tau$ modulo $2\pi/\log q$ for small $|\tau|$ is
rather uniform, we would maybe expect that $\sqrt{q}c(q)$ approximates 1 closely. 
Ihara \cite[Proposition 3]{I2} showed that under ERH we have
\begin{equation}
\label{iha}
\frac{\EKq}{\log q}=\frac32+(\sqrt{q}c(q)-1)+O\Big(\frac{1}{\log q}\Big).
\end{equation}
However, assuming
ERH and Conjecture HL, it follows from this and
Theorem \ref{HL-EK} that
$$\liminf_{q\rightarrow \infty}\sqrt{q}c(q)=-\infty.$$
Furthermore, assuming Conjecture EH, Theorem \ref{EH-EK} (i) and 
 (\ref{iha}) lead to the conjecture that the normal
order of $\sqrt{q}c(q)$ is $1/2$.


\subsection{The Euler-Kronecker constant for multiplicative sets}\label{EKS}
A set $S$ of positive integers is said to be {\it multiplicative} if for every pair $m$ and $n$ of coprime 
positive integers we have
that $mn$ is in $S$ iff both $m$ and $n$ are in $S$.  In other words, $S$ is a multiplicative set if
and only if the indicator function $f_S$ of $S$ is a multipicative function.

\noindent
{\tt Example 1}: the set of positive integers that can be written as a sum of two squares.\\
{\tt Example 2}: the set $S_q:=\{n\ge 1:~q\nmid \phi(n)\}$.\\
The Dirichlet series $L_S(s) := \sum_{n=1,~n\in S}^{\infty}n^{-s}$  converges for $\Re s > 1$. If
$L_S(s)$ has a simple pole at $s=1$ with residue $\delta>0$ and if
$$\gamma_S:=\lim_{s\rightarrow 1+0}\Big(\frac{L'_S(s)}{L_S(s)}+\frac{\delta}{s-1}\Big)$$
exists, we say that $S$ has Euler-Kronecker constant $\gamma_S$.
If we suppose that there exist $\delta,\rho>0$ such that
$$\sum_{p\le x,~p\in S}1=\delta\pi(x)+O\Big(\frac{x}{\log^{2+\rho}x}\Big),$$
then it can be shown that $\gamma_S$ exists.
In this terminology some of our results take a nicer form, e.g., in Theorem \ref{e0e1} we
now have
$\gamma_{S_q}=\gamma+O_\epsilon(q^{\epsilon-1})$ (with an ineffective constant). For
details and further results the reader is referred to Moree \cite{India}.

Finally, we like to point out that this paper is a very much reworked version of an earlier
preprint (2006) by the third author \cite{eerstev}. 
In it a proof of Theorem \ref{maintwo} on ERH is given. From the perspective
of computational number theory, this proof is far easier and less computation intensive 
than the one that does not assume ERH given here.


\section{Analytic Theory}\label{AnTh}


\subsection{Propositions \ref{e0_fromzetaK} and \ref{TqL}}\label{AnTh1}

We recall some facts from the theory of cyclotomic fields needed for our proofs. For 
a nice introduction to cyclotomic fields, see \cite{W}.
The following result, see
e.g. \cite[Theorem 4.16]{N}, describes the splitting of primes in the ring of integers of
a cyclotomic field.

\begin{lem} 
\label{washington}
(cyclotomic reciprocity law). Let $K=\mathbb Q(e^{2\pi i/m})$.
If the prime $p$ does not divide
$m$ and $f=\operatorname{ord}_m p$, then the principal 
ideal $p{\mathcal O}_K=\mathfrak{p}_1\cdots \mathfrak{p}_g$ with
$g=\phi(m)/f$, and all $\mathfrak{p_i}$'s distinct and of degree $f$.

However, if $p$ divides $m$, $m=p^am_1$ with $p\nmid m_1$ and $f=\operatorname{ord}_{m_1} p$, 
then $p{\mathcal O}_k=(\mathfrak{p}_1\cdots \mathfrak{p}_g)^e$ with $e=\phi(p^a)$, 
$g=\phi(m_1)/f$, and all $\mathfrak{p_i}$'s distinct and of degree $f$.
\end{lem}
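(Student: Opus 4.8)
The plan is to derive both cases of the splitting law from three standard structural facts about $K=\Q(e^{2\pi i/m})$: (i) $K/\Q$ is abelian with $\operatorname{Gal}(K/\Q)\cong(\Z/m\Z)^\times$ under $\sigma_a\colon e^{2\pi i/m}\mapsto e^{2\pi i a/m}$; (ii) $\mathcal{O}_K=\Z[e^{2\pi i/m}]$, and its discriminant divides a power of $m$; (iii) in any Galois extension all primes lying over a fixed rational prime share a common ramification index $e$ and residue degree $f$, and $efg=[K:\Q]=\phi(m)$, with $g$ the number of (distinct) primes above it. Facts (i)--(iii) are classical (see \cite{W}, \cite{N}); granted them, the lemma reduces to bookkeeping with the identity $efg=\phi(m)$.

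First I would handle the unramified case $p\nmid m$. Then $p$ divides neither $m$ nor $\operatorname{disc}(\mathcal{O}_K)$, so $e=1$; the Artin symbol at a prime over $p$ is $\sigma_p$, and $f$ is the order of $\sigma_p$ in $\operatorname{Gal}(K/\Q)$, which under (i) equals $\operatorname{ord}_m p$. (Alternatively: $\Phi_m\bmod p$ is separable since $X^m-1$ is, and each of its irreducible factors over $\mathbb F_p$ has degree $\operatorname{ord}_m p$ because a primitive $m$-th root of unity in $\overline{\mathbb F}_p$ generates $\mathbb F_{p^f}$ with $f$ least such that $m\mid p^f-1$; Dedekind's factorization theorem then transfers this to $p\mathcal{O}_K$.) Hence $g=\phi(m)/f$ and $p\mathcal{O}_K=\mathfrak p_1\cdots\mathfrak p_g$ with the $\mathfrak p_i$ distinct of degree $f$.

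For the ramified case, write $m=p^a m_1$ with $p\nmid m_1$ and set $f_0=\operatorname{ord}_{m_1}p$. The field $K$ contains $L=\Q(e^{2\pi i/p^a})$ and $M=\Q(e^{2\pi i/m_1})$, with $[L:\Q]\,[M:\Q]=\phi(p^a)\phi(m_1)=\phi(m)=[K:\Q]$. In $L$ the prime $p$ is totally ramified, since $\Phi_{p^a}(1+X)$ is Eisenstein at $p$ (as $\Phi_{p^a}(1)=p$ and $\Phi_{p^a}(1+X)\equiv X^{\phi(p^a)}\pmod p$), so there is a unique prime over $p$ with ramification index $\phi(p^a)$; in $M$, by the unramified case, $p$ has residue degree $f_0$ and breaks into $\phi(m_1)/f_0$ primes. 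Multiplicativity of $e,f,g$ in the towers $\Q\subset L\subset K$ and $\Q\subset M\subset K$ then gives, for a prime $\mathfrak P$ of $K$ over $p$, the bounds $e\ge\phi(p^a)$, $f\ge f_0$, and $g\ge\phi(m_1)/f_0$ (distinct primes of $M$ over $p$ lie under distinct primes of $K$). Multiplying, $\phi(m)=efg\ge\phi(p^a)\cdot f_0\cdot\phi(m_1)/f_0=\phi(m)$, forcing equality throughout: $e=\phi(p^a)$, $f=f_0=\operatorname{ord}_{m_1}p$, $g=\phi(m_1)/f_0$, which is precisely the claimed $p\mathcal{O}_K=(\mathfrak p_1\cdots\mathfrak p_g)^e$.

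I expect no serious obstacle: the only genuinely nontrivial inputs are the two classical facts that $\mathcal{O}_K=\Z[e^{2\pi i/m}]$ has discriminant a power of $m$ and that $\Q(e^{2\pi i/p^a})/\Q$ is totally ramified at $p$, both of which I would simply cite rather than reprove. The rest is the $efg=n$ identity together with the remark that in an abelian extension $f$ is the order of Frobenius. The one spot deserving a moment's care is the boundary case $p=2$, $a=1$: then $\phi(p^a)=1$, so the ``ramified'' branch of the statement is in fact unramified, but the formulas above stay correct there.
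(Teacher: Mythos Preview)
Your argument is correct and is essentially the standard textbook proof of the cyclotomic splitting law. Note, however, that the paper does not actually prove this lemma: it is quoted as a known result with a reference to Narkiewicz \cite[Theorem~4.16]{N}. So there is no ``paper's own proof'' to compare against; your write-up simply supplies what the authors chose to cite. The unramified case via Frobenius (or equivalently via Dedekind's theorem applied to $\Phi_m$ over $\mathbb{F}_p$) and the ramified case via the tower $\mathbb{Q}\subset\mathbb{Q}(e^{2\pi i/p^a}),\,\mathbb{Q}(e^{2\pi i/m_1})\subset K$ together with the $efg=\phi(m)$ squeeze are exactly how this is done in the sources the paper cites (\cite{N}, \cite{W}). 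Your remark about the degenerate case $p=2$, $a=1$ is apt and handled correctly.
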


In case $K=K(q)$, we have $r_1=0$, $2r_2=q-1$, $w(K)=2q$ (as $K$ contains
exactly $\{\pm 1,\pm \omega, \pm \omega^2, \ldots, \pm \omega^{q-1}\}$ as roots of unity, with
$\omega=e^{2\pi i/(q-1)}$)
and furthermore $D(K)=(-1)^{q(q-1)/2}q^{q-2}$, and thus we obtain from (\ref{zoklassiek}) that
 \begin{equation}
\label{residu}
\alpha^{\phantom{p}}_{K(q)}={\rm Res}_{s=1}\zeta_{K}(s)=2^{\frac{q-3}2}q^{-{\frac{q}2}}\pi^{\frac{q-1}2}h(K)R(K).
\end{equation}

For cyclotomic fields $K$ the Euler product for $\zeta_{K}(s)$ can be written down explicitly using
the ``cyclotomic reciprocity law''. We find that
\be\label{rezzie}
\begin{split}
\zeta_{K(q)}(s)&=\left(1-\frac{1}{q^s}\right)^{-1}\prod_{p\ne q}
\Big(1-\frac{1}{p^{sf_p}}\Big)^{\frac{1-q}{f_p}}\\
&= \left(1-\frac{1}{q^s}\right)^{-1}C(q,s)^{-1}\prod_{p\equiv 1({\rm mod~}q)}
\Big(1-\frac{1}{p^s}\Big)^{1-q}.
\end{split}
\ee

It is also well-known (see, e.g., \cite[Theorem 65]{FT}) that
\begin{equation}
\label{product}
\zeta_{K(q)}(s)=\zeta(s) \prod_{\chi\ne \chi_0} L(s,\chi),
\end{equation}
where the product is over characters $\chi$ modulo $q$, with $\chi_0$ being the 
principal character, and $L(s,\chi)$ the Dirichlet $L$-function associated with
 $\chi$.

\begin{proof}[Proof of Proposition \ref{e0_fromzetaK}]
First, \eqref{fqs} follows by combining \eqref{hdef} and \eqref{rezzie}.
By \eqref{product},
\be\label{hqLschi}
 h_q(s) =
(1+q^{-s}) \( \zeta(s) (1-q^{-s}) \)^{\frac{q-2}{q-1}}C(q,s)^{-\frac{1}{q-1}} \prod_{\chi\ne \chi_0}
L(s,\chi)^{-\frac{1}{q-1}}.
\ee
For $\chi\ne\chi_0$,  $L(s,\chi)$ is analytic and nonzero at $s=1$.
Hence, $f(s)=h_q(s) (s-1)^{(q-2)/(q-1)} s^{-1}$ is analytic in a 
neighborhood of $s=1$ and has a power series expansion there.
Moreover, $\prod_{\chi\ne \chi_0} L(s,\chi)$ has no zeros in the region
$\Re s \ge 1 - a_q (\log (|\Im s|+2))^{-1}$ for some positive $a_q$.  
Therefore, $h_q(s)/s$ has an expansion around the point $s=1$ of the form
\[
\frac{h_q(s)}{s}=\frac{1}{(s-1)^{(q-2)/(q-1)}}\Big(c_0(q)+c_1(q)(s-1)+\cdots+c_k(q)(s-1)^k+\cdots \Big),
\]
To apply the Selberg-Delange method, we also need a mild growth condition on
$h_q(s) \zeta(s)^{-\frac{q-2}{q-1}}$.
The function $C(q,s)$ is analytic for $\Re s > \frac12$, and uniformly bounded
in the half-plane $\Re s \ge \frac34$.  For $\sigma \ge 1 - a_q(2\log(|t|+2))^{-1}$,
\[
\Big| \prod_{\chi\ne \chi_0} L(\sigma+it) \Big|^{-1} \ll_q (\log (|t|+2))^{q-2}.
\]
By \cite[\S II.5, Theorem 3]{Ten},
an asymptotic expansion \eqref{zozithet} holds with the coefficients satisfying
$e_j(q) = c_j(q)/\Gamma(\frac{q-2}{q-1}-j)$.
\end{proof}

\begin{proof}[Proof of Proposition \ref{TqL}]
 By Proposition \ref{e0_fromzetaK}
and the functional equation $\Gamma(z)=(z-1)\Gamma(z-1)$, we have
\begin{align*}
 \frac{e_1(q)}{e_0(q)} &= -\frac{1}{q-1} \frac{c_1(q)}{c_0(q)} = -\frac{f'(1)}{(q-1)f(1)} \\
&= \frac{1}{q-1} \( 1 - \lim_{s\to 1^+} \(\frac{1-\frac{1}{q-1}}{s-1} + \frac{h_q'(s)}{h_q(s)} \)\).
\end{align*}
By the Laurent expansion $\zeta(s)=(s-1)^{-1}+\gamma+O(|s-1|)$, we have
\be\label{laurent_zeta}
 \frac{\zeta'(s)}{\zeta(s)} = -\frac{1}{s-1} + \gamma + O(|s-1|) \qquad (|s-1|\le 1).
\ee
Hence, by logarithmic differentiation of \eqref{hqLschi},
\[
\lim_{s\to 1^+} \frac{1-\frac{1}{q-1}}{s-1} + \frac{h_q'(s)}{h_q(s)}  = -\frac{\log q}{q+1}
+ \frac{(q-2)\log q}{(q-1)^2} + \frac{q-2}{q-1}\gamma - S(q) - \frac{1}{q-1}\sum_{\chi\ne \chi_0}
\frac{L'(1,\chi)}{L(1,\chi)}.
\]
By \eqref{laurent}, \eqref{laurent_zeta} and logarithmic differentiation of \eqref{product}, we have 
\be\label{EKL}
\EKq = \gamma + \sum_{\chi\ne \chi_0}
\frac{L'(1,\chi)}{L(1,\chi)}. 
\ee
On combining the various formulas the proof is completed.
\end{proof}


\subsection{The constant $C(q)$}\label{slechtec}
Spearman and Williams put, for a generator $\chi_q$ of the group of characters
modulo $q$,
\be\label{proddie}
C(q)=\prod_{r=1}^{q-2}
 \prod_{\chi_g(p)=\omega^r}\Big(1-\frac{1}{p^{(q-1)/(r,q-1)}}\Big)^{(r,q-1)}.
\ee
{From} this definition it is not a priori clear that $C(q)$ is independent of the choice
of $\chi_g$.  However, Spearman and Williams show that this is so.
\begin{prop}
\label{cc}
We have $C(q)=C(q,1)$.
\end{prop}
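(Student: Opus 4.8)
The plan is to show that the two products defining $C(q)$ and $C(q,1)$ are the same by matching them term by term via the splitting behavior of primes. Recall that $C(q,1) = \prod_{p\ne q,\, f_p\ge 2} (1 - p^{-f_p})^{(q-1)/f_p}$, where $f_p = \operatorname{ord}_q p$. The Spearman--Williams product \eqref{proddie} is organized by the value of $\chi_g(p) = \omega^r$ for a fixed generator $\chi_g$ of the character group modulo $q$ (here $\omega = e^{2\pi i/(q-1)}$). The key observation is that if $g$ is a primitive root modulo $q$ corresponding to $\chi_g$, then $\chi_g(p) = \omega^r$ exactly when $p \equiv g^r \pmod q$, i.e. when the index (discrete logarithm) of $p$ to base $g$ is $r$. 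In that case the multiplicative order of $p$ modulo $q$ is $f_p = (q-1)/\gcd(r, q-1)$, so that $\gcd(r,q-1) = (q-1)/f_p$ and $(q-1)/\gcd(r,q-1) = f_p$. Substituting these identities, the inner factor $(1 - p^{-(q-1)/(r,q-1)})^{(r,q-1)}$ in \eqref{proddie} becomes precisely $(1 - p^{-f_p})^{(q-1)/f_p}$.

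First I would make this substitution explicit: rewrite
\[
C(q) = \prod_{r=1}^{q-2} \ \prod_{\chi_g(p) = \omega^r} \Bigl(1 - p^{-f_p}\Bigr)^{(q-1)/f_p},
\]
noting that the condition $\chi_g(p) = \omega^r$ with $1\le r\le q-2$ ranges over all primes $p\ne q$ with $p\not\equiv 1\pmod q$, i.e. exactly the primes with $f_p\ge 2$, and each such prime occurs for exactly one value of $r$. Hence the double product collapses to $\prod_{p\ne q,\, f_p\ge 2} (1 - p^{-f_p})^{(q-1)/f_p} = C(q,1)$. Convergence is not an issue since each factor is $1 + O(p^{-2})$ and the product over all $p$ of such factors converges absolutely, so rearrangement of the terms is legitimate.

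The one genuine point requiring care — and the step I expect to be the main obstacle — is verifying that the value of the product \eqref{proddie} is independent of the choice of generator $\chi_g$, which Spearman and Williams assert but which we should confirm is consistent with our reformulation. This is automatic from our rewriting: the final expression $\prod_{p\ne q,\,f_p\ge 2} (1-p^{-f_p})^{(q-1)/f_p}$ manifestly does not mention $\chi_g$ at all, since $f_p$ depends only on $p$ and $q$. So the reformulation actually gives a clean explanation of the generator-independence as a byproduct. The only thing to check is that changing the generator $g$ to $g^a$ (with $\gcd(a,q-1)=1$) merely permutes the index $r\mapsto a r \pmod{q-1}$, which preserves $\gcd(r,q-1)$ and hence preserves $f_p$ for each prime in each residue class; thus the inner products are simply reindexed, leaving the overall product unchanged. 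Writing this out takes only a line or two.

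In summary, the proof is a direct computation: translate the character condition $\chi_g(p)=\omega^r$ into the congruence $p\equiv g^r\pmod q$, use $f_p = (q-1)/\gcd(r,q-1)$ to rewrite each factor, observe that the union over $r\in\{1,\dots,q-2\}$ of the classes $\{p: \chi_g(p)=\omega^r\}$ is exactly $\{p\ne q: f_p\ge 2\}$ with no repetition, and conclude $C(q) = C(q,1)$. No deep input is needed beyond the cyclotomic reciprocity law (Lemma \ref{washington}) already recalled, which is what identifies $f_p = \operatorname{ord}_q p$ as the residue degree governing the Euler factor of $\zeta_{K(q)}$.
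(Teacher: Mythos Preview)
Your proposal is correct and follows essentially the same route as the paper: both arguments establish that $\chi_g(p)=\omega^r$ forces $f_p=(q-1)/(r,q-1)$, rewrite each factor of \eqref{proddie} as $(1-p^{-f_p})^{(q-1)/f_p}$, and then observe that the range $1\le r\le q-2$ sweeps out exactly the primes $p\ne q$ with $f_p\ge 2$, each once, so the double product collapses to $C(q,1)$. The only cosmetic difference is that you pass through a primitive root $g$ to read off $f_p$ from the index, whereas the paper argues directly from $\chi_g(p^{f_p})=1$ and the injectivity of $\chi_g$ on $(\mathbb{Z}/q\mathbb{Z})^\times$; your added remark on generator-independence is a nice byproduct but not needed for the statement itself.
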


\begin{proof}
We claim that if $\chi_g(p)=\omega^r$, then $f_p=(q-1)/(r,q-1)$. We have
$1=\chi_g(p^{f_p})=\omega^{rf_p}$. It follows that $(q-1)|rf_p$ and thus $q_r=(q-1)/(r,q-1)$ must
be a divisor of $f_p$. On the other hand, since $\chi_g(a)=1$ if and only if $a=1$,
 it follows
from $\omega^{rq_r}=\chi_g(p^{q_r})=1$ and $q_r|f_p$, that $f_p=q_r$. Thus, we can rewrite
(\ref{proddie}) as
\begin{equation}
\label{proddie2}
C(q)=\prod_{r=1}^{q-2}\prod_{\chi_g(p)=\omega^r}\Big(1-\frac{1}{p^{f_p}}\Big)^{q-\frac{1}{f_p}}.
\end{equation}
Note that $p\ne q$ and $f_p\ge 2$ iff $\chi_g(p)=\omega^r$ for some $1\le r\le q-2$.
This observation in combination with the absolute convergence of the
double product (\ref{proddie2}), then shows that $C(q)=C(q,1)$. 
\end{proof}

\noindent {\bf Remark}. Proposition \ref{cc} says that $1/C(q)$ is the contribution at $s=1$
of the primes $p\ne q$, $p\not\equiv 1({\rm mod~}q)$ to the Euler product (\ref{rezzie}) of $K(q)$.


\subsection{On Mertens' theorem for arithmetic progressions}
\noindent A crucial ingredient in the paper of Spearman and Williams is the asymptotic estimate  \cite[Proposition 6.3]{SW}
that
\begin{equation}
\label{prodigee}
\prod_{\substack{p\le x\\ p\equiv 1({\rm mod~}q)}} \left(1-{\frac1{p}}\right)
=\Big({\frac{qe^{-\gamma}}{(q-1)\alpha^{\phantom{p}}_{K(q)}C(q)\log x}}\Big)^{\frac{1}{q-1}}
\Big(1+O_q\Big({\frac1{\log x}}\Big)\Big).
\end{equation}
An alternative, much shorter proof of the estimate (\ref{prodigee}) can be obtained
on invoking Mertens' theorem for algebraic number fields.
\begin{lem}
\label{rosie}
Let $\alpha^{\phantom{p}}_K$ denote the residue of $\zeta_K(s)$ at $s=1$. Then,
$$
\prod_{N\mathfrak p\le x}\Big(1-{\frac{1}{N\mathfrak p}}\Big)
={\frac{e^{-\gamma}}{\alpha^{\phantom{p}}_K\log x}}
\Big(1+O_K\left({\frac1{\log x}}\right)\Big),
$$
where the product is over the prime ideals $\mathfrak p$ in ${\cal O}_K$ having norm $\le x$.
\end{lem}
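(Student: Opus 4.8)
The plan is to imitate the classical proof of Mertens' third theorem, replacing the Riemann zeta function by $\zeta_K(s)$ and the prime number theorem by the prime ideal theorem with error term. First I would take logarithms: writing
\[
P_K(x) = \prod_{N\mathfrak p \le x}\Big(1 - \frac{1}{N\mathfrak p}\Big),
\]
we have $-\log P_K(x) = \sum_{N\mathfrak p \le x}\big(\frac{1}{N\mathfrak p} + \frac{1}{2N\mathfrak p^2} + \cdots\big)$. The higher-order terms $\sum_{\mathfrak p}\sum_{m\ge 2}\frac{1}{mN\mathfrak p^m}$ converge absolutely (they are dominated by $[K:\Q]\sum_p\sum_{m\ge2}\frac{1}{mp^m}$, since each rational prime $p$ lies under at most $[K:\Q]$ prime ideals and $N\mathfrak p\ge p$), so the tail beyond $x$ of this part is $O_K(1/x)$. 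Thus it suffices to understand $\sum_{N\mathfrak p\le x}\frac{1}{N\mathfrak p}$, and the target reduces to the asymptotic
\[
\sum_{N\mathfrak p \le x}\frac{1}{N\mathfrak p} = \log\log x + \gamma + \log\alpha^{\phantom{p}}_K + O_K\Big(\frac{1}{\log x}\Big).
\]

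The core input is a strong form of the prime ideal theorem: $\psi_K(x) := \sum_{N\mathfrak p^m\le x}\log N\mathfrak p = x + O_K(x\exp(-c_K\sqrt{\log x}))$, which follows from the standard zero-free region for $\zeta_K(s)$ together with its simple pole at $s=1$ of residue $\alpha^{\phantom{p}}_K$; equivalently $\vartheta_K(x):=\sum_{N\mathfrak p\le x}\log N\mathfrak p = x + O_K(x\exp(-c_K\sqrt{\log x}))$ after removing prime-power contributions. From here I would proceed in two stages. Stage one: partial summation converts $\vartheta_K$ into $\sum_{N\mathfrak p\le x}\frac{1}{N\mathfrak p} = \frac{\vartheta_K(x)}{x\log x} + \int_2^x \vartheta_K(t)\,d\!\left(\frac{-1}{t\log t}\right)\cdot(\text{sign})$, more precisely $\sum_{N\mathfrak p\le x}\frac{1}{N\mathfrak p} = \sum_{N\mathfrak p\le x}\frac{\log N\mathfrak p}{N\mathfrak p\log N\mathfrak p}$ handled by Abel summation against the weight $\frac{1}{t\log t}$; this yields $\sum_{N\mathfrak p\le x}\frac{1}{N\mathfrak p} = \log\log x + M_K + O_K(1/\log x)$ for some constant $M_K$. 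Stage two: identify $M_K = \gamma + \log\alpha^{\phantom{p}}_K$. For this I would compare $\sum_{\mathfrak p}\big(\frac{1}{N\mathfrak p^s} - \log\frac{1}{1-N\mathfrak p^{-s}}\big)$, which is analytic at $s=1$, with $\log\zeta_K(s) = \log\frac{\alpha^{\phantom{p}}_K}{s-1} + o(1)$ as $s\to 1^+$, while on the other side $\sum_{\mathfrak p}N\mathfrak p^{-s} = \int_{2^-}^\infty x^{-s}\,d\big(\sum_{N\mathfrak p\le x}1\big)$ and using the already-established $\sum_{N\mathfrak p\le x}\frac{1}{N\mathfrak p}=\log\log x+M_K+o(1)$ one extracts $\sum_{\mathfrak p}N\mathfrak p^{-s} = \log\frac{1}{s-1} + M_K + o(1)$. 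Matching the two expressions for $\log\zeta_K(s)$ as $s\to1^+$, and using the classical fact $\log\frac{1}{1-e^{-u}} - \frac{1}{u}\to$ relation that in the rational case forces the constant to be $\gamma$, gives $M_K - \log\alpha^{\phantom{p}}_K = \gamma$. (Alternatively, and perhaps more cleanly, one divides $\zeta_K(s)$ by $\zeta(s)$: the ratio is analytic and nonzero at $s=1$ with value $\alpha^{\phantom{p}}_K$, and comparing Dirichlet-series logarithms of $\zeta_K$ and $\zeta$ term by term against the known Mertens constant $\gamma$ for $\Q$ isolates $\log\alpha^{\phantom{p}}_K$ directly.)

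Finally I would assemble: $-\log P_K(x) = \sum_{N\mathfrak p\le x}\frac{1}{N\mathfrak p} + \big(\text{convergent higher terms}\big) = \log\log x + \gamma + \log\alpha^{\phantom{p}}_K + O_K(1/\log x)$, where the constant from the higher-order terms is exactly what is needed so that, upon exponentiating, $P_K(x) = \frac{e^{-\gamma}}{\alpha^{\phantom{p}}_K\log x}\big(1 + O_K(1/\log x)\big)$; here $e^{-\log\log x} = 1/\log x$ and $e^{O_K(1/\log x)} = 1 + O_K(1/\log x)$. The main obstacle is the constant identification in Stage two — getting the precise value $\gamma + \log\alpha^{\phantom{p}}_K$ rather than merely "some $M_K$" — since this requires careful bookkeeping of the convergent auxiliary series and a clean limiting comparison with $\log\zeta_K(s)$ near $s=1$; the error-term propagation through partial summation and exponentiation is routine by comparison. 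The deduction of \eqref{prodigee} is then immediate: apply the lemma with $K=K(q)$, note $N\mathfrak p = q$ for the unique prime above $q$ and $N\mathfrak p \equiv 1$ or is a higher prime power otherwise, so that $\prod_{N\mathfrak p\le x}(1-1/N\mathfrak p)$ differs from $(1-1/q)\prod_{p\le x,\,p\equiv1(q)}(1-1/p)^{q-1}$ times $C(q,x)$ (the partial Euler product of \eqref{rezzie} over $f_p\ge 2$, which converges to $C(q)$ with error $O_q(1/x)$) by the cyclotomic reciprocity law; solving for the progression product and using $C(q)=C(q,1)$ from Proposition \ref{cc} gives \eqref{prodigee}.
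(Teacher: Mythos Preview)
Your approach is correct and is exactly what the paper does: its entire proof reads ``Similar to that of the usual Mertens' theorem (see e.g.\ Rosen \cite{R} or Lebacque \cite{Leba}),'' so your sketch in fact supplies the details the paper omits. One bookkeeping slip: the target for $\sum_{N\mathfrak p\le x}1/N\mathfrak p$ should be $\log\log x+\gamma+\log\alpha_K-B_K+O_K(1/\log x)$ with $B_K=\sum_{\mathfrak p}\sum_{m\ge2}(mN\mathfrak p^m)^{-1}$, not $\gamma+\log\alpha_K$ itself (and correspondingly the Abel-summation extraction gives $\sum_{\mathfrak p}N\mathfrak p^{-s}=\log\frac{1}{s-1}-\gamma+M_K+o(1)$, the $-\gamma$ coming from $\int_0^\infty(\log u)e^{-u}\,du$); your alternative route via $\zeta_K/\zeta$ and the known rational Mertens constant sidesteps this cleanly.
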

\begin{proof}
Similar to that of the usual Mertens' theorem (see e.g.\,Rosen \cite{R} 
or Lebacque \cite{Leba}).
\end{proof}
\noindent {\it Proof of estimate} (\ref{prodigee}). We invoke Lemma \ref{rosie} with $K=K(q)$ 
and work out the product over the prime ideals more explicitly using the cyclotomic reciprocity law, Lemma 
\ref{washington}. One finds, for $x\ge q$, that it equals
$$
\left(1-\frac{1}{q}\right)\prod_{\substack{p\le x\\p\equiv 1({\rm
      mod~}q)}}\Big(1-{\frac{1}{p}}\Big)^{q-1}
\prod_{\substack{p^{f_p}\le x,~p\neq q\\ p\not \equiv 1({\rm mod~}q)}}\Big(1-{\frac{1}{p^{f_p}}}\Big)^
{\frac{q-1}{f_p}}=$$
$$
\left(1+O_q\left(\frac{1}{\sqrt{x}}\right)\right)\left(1-{\frac{1}{q}}\right)C(q)
\prod_{\substack{p\le x\\ p\equiv 1({\rm mod~}q)}}\Big(1-{\frac1{p}}\Big)^{q-1},
$$
where we used that for $k\ge 2$,
$$
\prod_{p^k>x}(1-p^{-k})^{-1}=1+O(\sum_{n^k>x}{n^{-k}})=1+O(x^{1/k-1}).
$$ 
Thus, on invoking Lemma \ref{rosie}, we deduce \eqref{prodigee}.\qed\\

For recent work on this theme, the reader is referred to the papers by Languasco and Zaccagnini 
\cite{LZ, LZ2, LZ3, LZ4}.

%
%
\section{Estimates for the Euler-Kronecker constants $\EKq$}\label{Sec:EKq}
%
%


\subsection{Unconditional bounds for $\EKq$}


\begin{proof}[Proof of Proposition \ref{EKsump}]
Apply \eqref{EKL}, the orthogonality of characters, and the relation 
(e.g. \cite[\S 55]{La09b} or \cite[\S 6.2, Exercise 4]{MV})
\[
 \sum_{n\le x} \frac{\Lambda(n)}{n} = \log x - \gamma + o(1) \qquad (x\to \infty)
\]
to obtain the first claimed bound.  The sum on $n$ equals
\[
 \sum_{\substack{p\le x \\ p\equiv 1\pmod{q}}} \frac{\log p}{p-1} - A(x)+B(x),
\]
where
\[
 A(x)=\sum_{\substack{p\le x, p^a>x \\ p\equiv 1\pmod{q}}} \frac{\log p}{p^a},
\qquad B(x) = \sum_{\substack{p^a\le x \\ p^a\equiv 1\pmod{q} \\ p\not\equiv 1\pmod{q}}}
\frac{\log p}{p^a}.
\]
Clearly, $\lim_{x\to \infty} B(x) = S(q)$.  The last estimate we need is $\lim_{x\to\infty} 
A(x)=0$, which is proved as follows:
\[
 A(x) \le \sum_{a=2}^\infty \sum_{n > x^{1/a}} \frac{\log n}{n^a} \ll
\sum_{a=2}^\infty \frac{\log x}{a^2 x^{1-1/a}} \ll \frac{\log x}{\sqrt{x}}.\qedhere
\]
\end{proof}

\noindent {\bf Remark 1}. Alternatively one can prove Proposition \ref{EKsump} on making
the limit formula (\ref{ouddedoos}) explicit for $K(q)$ using Lemma \ref{washington}.

\noindent {\bf Remark 2}.  Proposition \ref{EKsump} can be used to approximate, nonrigorously, the
value of $\EKq$.  For example, when $q=964477901$, the right side in 
Proposition \ref{EKsump} stays very close to $-0.18$ for $10^6 \le x/q\le 10^7$;
see Theorem \ref{EK<0}.
\medskip

\begin{prop}\label{sumsmallp}
 If $y\ge 10q$ and $q\ge 11$, then
\[
\sum_{\substack{p\le y \\ p\equiv 1\pmod{q}}} \frac{\log p}{p-1}
\le \frac{2\log y + 2(\log q)\log\log (y/q)}{q-1}.
\]
\end{prop}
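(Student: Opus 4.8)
The plan is to bound the sum $\sum_{p\le y,\,p\equiv 1(q)}\frac{\log p}{p-1}$ by splitting the range of $p$ at a threshold, handling the small primes by a trivial count and the large primes by partial summation against the Brun--Titchmarsh inequality. First I would note that every prime $p\equiv 1\pmod q$ with $p\le y$ satisfies $p=kq+1$ for some $1\le k\le y/q$, so there are at most $y/q$ such primes, and each contributes $\frac{\log p}{p-1}\le \frac{\log y}{kq}$. Summing the ``small'' part $p\le q^2$ (equivalently $k\le q$) this way gives a contribution at most $\frac{\log y}{q}\sum_{k\le q}\frac1k\le \frac{\log y}{q}(1+\log q)$, which already has the right shape, though I will want to be a little more careful to land the stated constants; in fact it is cleaner to split at $y/q$ versus a fixed cutoff and track where the $\log\log(y/q)$ term comes from.

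The main tool for the large primes is the Brun--Titchmarsh inequality: for $q<t$,
\[
\pi(t;q,1)\le \frac{2t}{\phi(q)\log(t/q)}=\frac{2t}{(q-1)\log(t/q)}.
\]
I would write $\sum_{q<p\le y,\,p\equiv 1(q)}\frac{\log p}{p-1}$ as a Riemann--Stieltjes integral $\int_{q}^{y}\frac{\log t}{t-1}\,d\pi(t;q,1)$ and integrate by parts, so the sum is
\[
\frac{\log y}{y-1}\pi(y;q,1)+\int_{q}^{y}\pi(t;q,1)\,\frac{-d}{dt}\!\left(\frac{\log t}{t-1}\right)dt.
\]
Since $-\frac{d}{dt}\frac{\log t}{t-1}=\frac{1}{t-1}\cdot\frac{\log t - 1 + 1/t}{t-1}$ is positive and of size $\asymp \frac{\log t}{t^2}$, inserting the Brun--Titchmarsh bound $\pi(t;q,1)\ll \frac{t}{(q-1)\log(t/q)}$ turns the integral into $\frac{1}{q-1}\int_{q}^{y}\frac{dt}{t\log(t/q)}$ up to lower-order terms, and this integral evaluates to $\log\log(y/q)$ after the substitution $u=t/q$ (it is $\int_{1}^{y/q}\frac{du}{u\log u}$, which is $\log\log(y/q)$ plus a bounded error absorbed by the lower limit being handled separately). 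The boundary term $\frac{\log y}{y-1}\pi(y;q,1)\le \frac{2\log y}{(q-1)\log(y/q)}\le \frac{2\log y}{q-1}$ contributes the first summand on the right-hand side.

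The main obstacle will be bookkeeping the constants so that everything fits under $\frac{2\log y + 2(\log q)\log\log(y/q)}{q-1}$ uniformly for $y\ge 10q$, $q\ge 11$. Near the lower end $t\approx q$ the factor $1/\log(t/q)$ blows up, so I cannot apply Brun--Titchmarsh all the way down to $t=q$; instead I would apply the trivial bound $\pi(t;q,1)\le t/q$ for $t$ in an initial range, say $q\le t\le q^{2}$, where it gives $\int_{q}^{q^2}\frac{1}{q}\cdot\frac{\log t}{t}\,dt\asymp \frac{(\log q)^2}{q}$, which I need to check is $\le \frac{C(\log q)\log\log(y/q)}{q-1}$; this is where the hypotheses $y\ge 10q$ (so $\log\log(y/q)\ge \log\log 10>0$ is bounded below) and $q\ge 11$ are used, and it is the one place the argument is slightly delicate. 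For $t\ge q^2$ one has $\log(t/q)\ge \frac12\log t$, so Brun--Titchmarsh gives a clean bound with no small-denominator issue, and the remaining integral is straightforward. Combining the boundary term, the $t\le q^2$ range, and the $t\ge q^2$ range, and comparing constants, yields the stated inequality.
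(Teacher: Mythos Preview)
Your overall strategy—partial summation against Brun--Titchmarsh—is exactly the paper's, and your boundary term and large-$t$ analysis are fine. The genuine problem is the range $q<t\le q^2$ where you propose to use the trivial bound $\pi(t;q,1)\le t/q$. As you yourself compute, this gives a contribution of size $\asymp (\log q)^2/q$ (equivalently, $\sum_{k\le q}\frac{\log(kq)}{kq}\asymp \frac{(\log q)^2}{q}$). But for $y$ in the range $10q\le y\le q^2$ the target bound is only
\[
\frac{2\log y+2(\log q)\log\log(y/q)}{q-1}\;\ll\;\frac{(\log q)\log\log q}{q},
\]
which is genuinely smaller; for instance at $y=q^2$ you would need $\tfrac32\log q\le 4+2\log\log q$, and this fails for all large $q$. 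So the ``slightly delicate'' check you flag does not go through, and the split at $q^2$ cannot yield the stated constants.

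The fix, and what the paper does, is simply not to split: the Montgomery--Vaughan form of Brun--Titchmarsh, $\pi(t;q,1)\le \frac{2t}{(q-1)\log(t/q)}$, is valid for all $t>q$, so one can start the partial-summation integral at $t=2q$ (there are no primes $p\equiv 1\pmod q$ in $(q,2q]$ anyway). The feared blow-up of $1/\log(t/q)$ near $t=q$ is harmless once the lower limit is $2q$, since then $\log(t/q)\ge\log 2$. The resulting integral $\int_{2q}^{y}\frac{\log q}{t\log(t/q)}\,dt$ evaluates, after $u=t/q$, to $(\log q)\bigl(\log\log(y/q)-\log\log 2\bigr)$, producing precisely the second term in the claimed bound. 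The remaining pieces (the boundary term and $\int_{2q}^{y}\frac{dt}{t}$) assemble into the $2\log y$ term after some explicit bookkeeping using $y\ge 10q$ and $q\ge 11$.
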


\begin{proof}
By the Montgomery-Vaughan sharpening of the Brun-Titchmarsh inequality \cite{MV73}, we have
\[
 \pi(y;q,1) \le \frac{2y}{(q-1)\log(y/q)},
\]
and hence, by partial summation,
\begin{align*}
\sum_{\substack{p\le y \\ p\equiv 1\pmod{q}}} &\frac{\log p}{p-1} =
 \frac{\pi(y;q,1)\log y}{y-1} + \int_{2q}^y \frac{\pi(t;q,1)}{(t-1)^2}\(\log t - \frac{t-1}{t}\)\, dt\\
&\le \frac{2}{q-1} \( \frac{y\log y}{(y-1)\log(y/q)} +
 \int_{2q}^y \frac{t}{(t-1)^2} + \frac{t\log q-(t-1)}{(t-1)^2\log(t/q)} dt  \)\\
&\le  \frac{2}{q-1} \( \frac{y}{y-1}\( 1 + \frac{\log q}{\log 10}\) + \int_{2q}^y
\frac{1}{t}+\frac{2}{(t-1)^2}+\frac{\log q}{t\log(t/q)} \, dt \) \\
&\le \frac{2}{q-1} \( 1.01+0.44\log q +\log (\frac{y}{2q}) + \frac{2}{2q-1}
+ (\log q)(\log\log (\frac{y}{q}) - \log\log 2)  \) \\
&\le \frac{2\log y+2(\log q)\log\log (y/q)}{q-1}.\qedhere
\end{align*}
\end{proof}

\begin{prop}\label{sieve}
 Uniformly for $z\ge 2$, $\delta>0$ and $0<\eps\le 1$, the number of primes $q\le z$ for
which
\[
 \sum_{\substack{p\le q^{1+\eps} \\ p\equiv 1\pmod{q}}} \frac{\log p}{p-1} \ge
\delta \frac{\log q}{q}
\]
is $O(\eps \pi(z)/\delta)$.
\end{prop}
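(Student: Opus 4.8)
The plan is to bound the relevant count by a first-moment (averaging) argument, exploiting the sparsity of primes $p \equiv 1 \pmod q$ with $p$ small. First I would fix a prime $p \le z^{1+\eps}$ and count the number of primes $q \le z$ with $q \mid p-1$ and $p \le q^{1+\eps}$; the divisibility condition alone forces $q$ to be a prime divisor of $p-1$, hence there are at most $\log_2 p \ll \log z$ such $q$ for each $p$, but more importantly the constraint $p \le q^{1+\eps}$, i.e. $q \ge p^{1/(1+\eps)}$, confines $q$ to a short range near $p^{1/(1+\eps)}$. Rather than swapping the order of summation over a fixed quantity, I would estimate
\[
T := \sum_{q \le z} \sum_{\substack{p \le q^{1+\eps} \\ p \equiv 1 \pmod q}} \frac{\log p}{p-1}
\]
by reversing the order: $T = \sum_{p} \frac{\log p}{p-1} \,\#\{q \le z : q \mid p-1,\ q \le z,\ q \ge p^{1/(1+\eps)}\}$, where the inner set is empty unless $p \le z^{1+\eps}$.

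The key step is the observation that for a given $p$, a prime $q$ with $q \mid p-1$ and $q \ge p^{1/(1+\eps)}$ is one of at most $O(\eps \log p / \log p) = O(\eps \cdot (\text{something}))$ — more precisely, since $p - 1 < p$ and each such $q$ exceeds $p^{1/(1+\eps)} = p^{1 - \eps/(1+\eps)}$, the product of any two distinct such primes would exceed $p^{2 - 2\eps/(1+\eps)}$, which exceeds $p$ once $\eps$ is bounded away from $1$ in the relevant regime; one has to be slightly more careful, but the upshot is that the number of such $q$ is $O(1)$ when $\eps \le 1/2$, and in general one gets a bound like $O(1/(1-\eps)) = O(1)$ for $\eps \le 1$, or simply $O(\log p)$ trivially. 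To extract the factor $\eps$ I would instead not count $q \mid p-1$ exactly but sum the indicator over $q$ in the range $[p^{1/(1+\eps)}, z]$ weighted by whether $q \mid p-1$; summing $\frac{\log p}{p-1}$ over $p$ in the dyadic range around $q^{1+\eps}$ and using Mertens-type or Brun–Titchmarsh estimates (as in Proposition~\ref{sumsmallp}) shows the contribution of primes $p$ with $q^{1+\delta'} < p \le q^{1+\eps}$ to the inner sum over $p$ is $O(\eps \log q / q)$ uniformly, simply because $\int \frac{dt}{t \log t}$ over $[\log q, (1+\eps)\log q]$ contributes the factor $\eps$.

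Concretely, the cleanest route: apply Proposition~\ref{sumsmallp} (or partial summation from Brun–Titchmarsh directly) to get, for each $q$,
\[
\sum_{\substack{q^{1+a} < p \le q^{1+b} \\ p \equiv 1 \pmod q}} \frac{\log p}{p-1} \ll \frac{(b-a)\log q}{q}
\]
uniformly for $0 \le a < b \le \eps$ and $\eps \le 1$. Then
\[
\#\Big\{q \le z : \sum_{\substack{p\le q^{1+\eps}\\ p\equiv 1(q)}} \tfrac{\log p}{p-1} \ge \delta \tfrac{\log q}{q}\Big\}
\le \frac{1}{\delta} \sum_{q \le z} \frac{q}{\log q} \sum_{\substack{p\le q^{1+\eps}\\ p\equiv 1(q)}} \frac{\log p}{p-1}
\ll \frac{1}{\delta} \sum_{q \le z} \eps = O\!\Big(\frac{\eps\, \pi(z)}{\delta}\Big),
\]
using Markov's inequality in the first step and the per-$q$ bound $\sum_{p} \frac{\log p}{p-1} \ll \eps \log q / q$ (the $a=0$, $b=\eps$ case) in the second. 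I expect the main obstacle to be making the per-$q$ bound $\ll \eps \log q / q$ genuinely uniform in $\eps$ down to small $q$ and small $\eps$ — this is where one must invoke the Montgomery–Vaughan form of Brun–Titchmarsh to avoid losing a constant or a $\log\log$ factor, and care is needed near $p \asymp q$ where the Brun–Titchmarsh denominator $\log(p/q)$ degenerates; one handles that tail separately, noting there are $O(1)$ primes $p \equiv 1 \pmod q$ with $q < p \le 10q$, say, each contributing $O(\log q / q)$, which is absorbed after choosing the dyadic decomposition to start at $p > q^{1+c}$ for a small fixed $c$ and treating $q < p \le q^{1+c}$ crudely.
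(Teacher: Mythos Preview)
Your ``cleanest route'' has a genuine gap: the per-$q$ bound
\[
\sum_{\substack{p\le q^{1+\eps}\\ p\equiv 1\pmod q}}\frac{\log p}{p-1}\ \ll\ \frac{\eps\log q}{q}
\]
is simply false as a pointwise statement. If $2q+1$ is prime, then as soon as $q^{1+\eps}>2q+1$ (i.e.\ $\eps>\log 2/\log q+o(1)$) the single term $p=2q+1$ already contributes $\tfrac{\log(2q+1)}{2q}\sim\tfrac{\log q}{2q}$, which does not shrink with $\eps$. Brun--Titchmarsh cannot beat this: partial summation from $\pi(t;q,1)\le\frac{2t}{(q-1)\log(t/q)}$ gives at best $O\bigl(\tfrac{\log q}{q}\log(\eps\log q)\bigr)$ (as in Proposition~\ref{sumsmallp}), never $O(\eps\log q/q)$. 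Your tail fix does not help either: the range $q<p\le 10q$ (or $q<p\le q^{1+c}$) contributes $\asymp\log q/q$ whenever it contains a prime $\equiv 1\pmod q$, so after Markov you recover only $O(\pi(z)/\delta)$ and the factor $\eps$ is lost precisely where it matters.

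The factor $\eps$ is an \emph{average} phenomenon over $q$, and the swap you sketch in your first paragraph is the right instinct---but it must be combined with a sieve for prime pairs, not a divisor count. Writing $p=kq+1$ with even $k\le q^{\eps}\le z^{\eps}$, the paper bounds, for each fixed $k$, the number of primes $q\le z$ with $kq+1$ prime by $O\bigl(\tfrac{k}{\phi(k)}\tfrac{z}{\log^2 z}\bigr)$ via the upper-bound sieve. After Markov one gets
\[
\sum_{q\le z}\frac{q}{\log q}\sum_{\substack{p\le q^{1+\eps}\\ p\equiv 1\pmod q}}\frac{\log p}{p-1}
\ \ll\ \sum_{k\le z^{\eps}}\frac{1}{k}\cdot\frac{k}{\phi(k)}\cdot\frac{z}{\log^2 z}
\ =\ \frac{z}{\log^2 z}\sum_{k\le z^{\eps}}\frac{1}{\phi(k)}\ \ll\ \frac{\eps\, z}{\log z},
\]
the $\eps$ emerging from the \emph{length} of the $k$-sum via $\sum_{k\le z^{\eps}}1/\phi(k)\ll\eps\log z$. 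No pointwise estimate on a single $q$ can produce this.
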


\begin{proof}
By sieve methods  (e.g. \cite[Theorem 5.7]{HR}), for an even $k\ge 2$,
 the number of prime $q\le z$ with $kq+1$ prime 
is $O(\frac{k}{\phi(k)} \frac{z}{\log^2 z})$ uniformly in $k$.  Thus, the
number of primes $q$ in question is
\begin{align*}
&\le \sum_{q\le z} \frac{q}{\delta \log q} \sum_{\substack{p\le q^{1+\eps} \\ p\equiv 1\pmod{q}}}
   \frac{\log p}{p-1} 
\le {\frac{1}{\delta}} \sum_{\substack{k\le z^{\eps} \\ 2|k }} 
  \sum_{\substack{k^{1/\eps} \le q\le z\\ kq+1 \text{ prime}}} \frac{\log(kq+1)}{k\log q} \\
&\ll \frac{z}{\delta \log^2 z} \sum_{k\le  z^{\eps}} \frac{1}{\phi(k)} \ll \frac{\eps}{\delta}
\frac{z}{\log z},
\end{align*}
where we used the well-known estimate $\sum_{n\le x}\phi(n)^{-1}=O(\log x)$.
\end{proof}


\begin{lem}\label{beta}
 Let $q\ge 10000$ be prime and let $\chi$ be the quadratic character modulo $q$.
If $L(\beta_0,\chi)=0$, then 
\[
\beta_0 \ge 1 - \frac{3.125\min(2\pi,\frac12\log q)}{\sqrt{q}\log^2 q}. 
\]
\end{lem}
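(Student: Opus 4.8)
The plan is to obtain a lower bound for the (possible) real zero $\beta_0$ of the quadratic character $\chi=\chi_q$ via a classical argument that combines an upper bound for $L(1,\chi)$ with the mean value theorem applied to $L(s,\chi)$ on the interval $[\beta_0,1]$. First I would recall that $L(\beta_0,\chi)=0$ and $L(1,\chi)>0$, so by the mean value theorem there is a point $\sigma_0\in(\beta_0,1)$ with
\[
L(1,\chi) = L(1,\chi)-L(\beta_0,\chi) = (1-\beta_0)\,L'(\sigma_0,\chi).
\]
Hence $1-\beta_0 = L(1,\chi)/L'(\sigma_0,\chi)$, and to get the claimed lower bound on $\beta_0$ I need an \emph{upper} bound for $L(1,\chi)$ and an \emph{upper} bound for $L'(\sigma,\chi)$ valid for all $\sigma$ in a neighbourhood of $1$ (say $\sigma\ge 1-1/\log q$, which certainly contains $\sigma_0$ once a crude preliminary zero-free region is invoked).

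For the numerator, the standard explicit estimate $L(1,\chi)\le \tfrac12\log q + O(1)$ for a real primitive character modulo $q$ will do, but to get the sharp constant $3.125$ and the truncation at $\min(2\pi,\tfrac12\log q)$ one should use the sharper bound $L(1,\chi)\le \tfrac12\log q$ (valid for $q$ large, e.g.\ a result of Louboutin or the classical bound refined), and also the trivial bound $L(1,\chi)\ll 1$; taking the minimum of the two sources of the estimate is presumably where the $\min(2\pi,\tfrac12\log q)$ in the statement originates. For the derivative, I would bound $|L'(\sigma,\chi)|$ for $\sigma$ near $1$ by a P\'olya--Vinogradov / partial summation argument: writing $L'(s,\chi) = -\sum_{n\ge 1}\chi(n)(\log n)n^{-s}$ and using that the character sums $\sum_{n\le t}\chi(n)$ are $O(\sqrt q\log q)$, partial summation yields $|L'(\sigma,\chi)| \ll \sqrt q\,\log^2 q$ uniformly for $\sigma\ge 1-1/\log q$ — and with care about constants this gives the denominator $\sqrt q\log^2 q$ appearing in the statement. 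Combining, $1-\beta_0 \ll \log q / (\sqrt q\log^2 q) = 1/(\sqrt q\log q)$, and tracking all constants sharply turns the $\ll$ into the explicit $3.125\min(2\pi,\tfrac12\log q)/(\sqrt q\log^2 q)$.

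The main obstacle will be the bookkeeping of explicit constants: both the Louboutin-type bound for $L(1,\chi)$ and the P\'olya--Vinogradov bound for the partial character sums (and hence for $L'(\sigma,\chi)$) need to be invoked in fully explicit form, and the mean-value argument requires knowing that $\sigma_0$ — equivalently that $\beta_0$ — is already not too far from $1$, which should be supplied by a cheap preliminary bound (e.g.\ Dirichlet's class-number argument, or simply the McCurley/Page-type zero-free region already cited in the paper) so that the uniform derivative estimate applies on the whole interval $[\beta_0,1]$. The hypotheses $q\ge 10000$ and the appearance of $\min(2\pi,\tfrac12\log q)$ strongly suggest that the authors use whichever of the two $L(1,\chi)$ bounds is smaller for the given range, so the write-up should split according to the size of $\log q$ (though for $q\ge 10000$ we have $\tfrac12\log q > 2\pi$ exactly when $q > e^{4\pi}\approx 2.9\times 10^5$, so both cases genuinely occur); the rest is then routine partial summation and arithmetic with the constants.
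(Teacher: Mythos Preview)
Your mean-value-theorem framework is right, and your plan to bound $|L'(\sigma,\chi)|$ by P\'olya--Vinogradov plus partial summation is exactly what the paper does. But the rest of the proposal has the inequalities running the wrong way, and this is not just bookkeeping.

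First, note that the printed statement almost certainly contains a misprint: the paper's own proof concludes with ``$(1-\beta_0)(0.32\log^2 q)\ge L(1,\chi)$'', i.e.\ $1-\beta_0 \ge 3.125\,\min(2\pi,\tfrac12\log q)/(\sqrt q\log^2 q)$, which is $\beta_0\le 1-\cdots$, a zero-free region. This is also how the lemma is used later (the line ``$D\ge 16.1$'' in the proof of Theorem~\ref{maintwo} forces $D=3.125\min(\cdots)$, not $\max$). So the target is a \emph{lower} bound on $1-\beta_0$, not an upper bound.

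Second, even taking the statement at face value, your argument is internally inconsistent: from $1-\beta_0=L(1,\chi)/|L'(\sigma_0,\chi)|$ you cannot get an upper bound on the ratio from upper bounds on \emph{both} numerator and denominator. For the (intended) lower bound on $1-\beta_0$ you need a \emph{lower} bound on $L(1,\chi)$ and an \emph{upper} bound on $|L'|$.

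Third, and most importantly, your guess about the origin of $\min(2\pi,\tfrac12\log q)$ is wrong. It does not come from Louboutin-type upper bounds on $L(1,\chi)$. The paper obtains a \emph{lower} bound from Dirichlet's class number formula: if $q\equiv 3\pmod 4$ then $L(1,\chi)=\pi h(-q)/\sqrt q\ge 2\pi/\sqrt q$ (since $h(-q)\ge 2$ for $q>163$), while if $q\equiv 1\pmod 4$ then $L(1,\chi)=h(q)\log u/\sqrt q\ge \tfrac12\log q/\sqrt q$ (since the fundamental unit satisfies $u>\sqrt q$). Taking the worse of the two cases gives $L(1,\chi)\ge \min(2\pi,\tfrac12\log q)\,q^{-1/2}$. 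The $\sqrt q$ in the final statement comes from \emph{this} lower bound, not from the derivative estimate; in fact the paper gets $|L'(\sigma,\chi)|\le 0.32\log^2 q$ with no $\sqrt q$ at all (by splitting the sum at $y=q^{0.67}$ and using P\'olya--Vinogradov on the tail), and $3.125=1/0.32$.

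Finally, the preliminary assumption you allude to is handled in the paper by simply assuming $\beta_0\ge 1-0.2q^{-1/2}$ (else the claimed lower bound on $1-\beta_0$ is immediate from $0.2\ge 3.125\min(\cdots)/\log^2 q$ for $q\ge 10000$); this keeps $\sigma$ close enough to $1$ that the factor $y^{1-\sigma}$ in the partial-summation bound for $L'$ stays bounded.
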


\begin{proof}
By Dirichlet's class  number formula \cite[\S 6, (15) and (16)]{Da},
\[
 L(1,\chi) =\displaystyle{ \begin{cases} \frac{\pi h(-q)}{\sqrt{q}} & q\equiv 3\pmod{4} \\ \\
              \frac{h(q)\log u}{\sqrt{q}} & q\equiv 1\pmod{4},
             \end{cases}}
\]
where $h(d)$ is the class number of $\mathbb{Q}(\sqrt{d})$, and $u$ is
the smallest unit in $\mathbb{Q}(\sqrt{d})$ satisfying $u>1$.  Since
$u > \sqrt{q}$ and $h(-s)\ge 2$ for $s>163$, we obtain for $q>163$ that
$L(1,\chi) \ge \min(2\pi,\frac12 \log q) q^{-1/2}$.  
Assume $\beta_0 \ge 1-0.2 q^{-1/2}$,
else there is nothing to prove.  Let $V(t)=\sum_{n\le t} \chi(n)$.  By the P\'olya-Vinogradov
inequality (\cite[\S 23, (2)]{Da} or \cite[\S 9.4]{MV}), for $t>u>0$,
\begin{align*}
 |V(t)-V(u)| &< \frac{2}{\sqrt{q}} \sum_{a=1}^{(q-1)/2} \frac{1}{\sin(\pi a/q)}
\le \frac{2}{\sqrt{q}} \int_{1/2}^{q/2} \frac{dt}{\sin(\pi t/q)} \\
&= \frac{2\sqrt{q}}{\pi} \log \cot \pfrac{\pi}{4q} < \frac{2}{\pi} \sqrt{q} \log
(4q/\pi).
\end{align*}
Hence, for $\frac12 \le \sigma \le 1$ and $y\ge 100$,
\begin{align*}
| L'(\sigma,\chi)| &\le y^{1-\sigma} \sum_{n\le y} \frac{\log n}{n}
+ \int_y^\infty |V(t)-V(y)| \frac{\sigma\log t-1}{t^{1+\sigma}}\, dt \\
&\le y^{1-\sigma}\( \frac{\log^2 y}{2} + \frac{2}{\pi}\sqrt{q} \log (\frac{4q}{\pi})
 \frac{\log y}{y} \).
\end{align*}
Taking $y=q^{0.67}$ gives
\[
 |L'(\sigma,\chi)| \le q^{0.67(1-\sigma)} ( 0.316\log^2 q) \le 0.32 \log^2 q.
\]
The mean value theorem implies $(1-\beta_0)(0.32\log^2 q) \ge L(1,\chi)$ and the lemma follows.
\end{proof}


\subsection{Numerical calculation of $\EKq$}


The identity \eqref{EKL} is useful for numerically calculating $\EKq$
 for small $q$.  For example, cf. \cite{M2},
$$
\gamma_3=\gamma+{\frac{L'(1,\chi_{3})}{L(1,\chi_{3})}}
=0.945497280871680703239749994158189073\ldots,
$$
where $\chi_{3}$ stands for the only non-principal character modulo $3$.
For larger $q$ we use the following formulas.  First,
\be\label{L1chi}
L(1,\chi) = -\frac{1}{q} \sum_{r=1}^{q-1} \chi(r) \psi\pfrac{r}{q}, \qquad
\psi(z) = \frac{\Gamma'(z)}{\Gamma(z)}.
\ee
We also use
\be\label{L'1chi}
-L'(1,\chi) = \sum_{n=1}^\infty \frac{\chi(n)\log n}{n} = 
(\log q)L(1,\chi) + \frac{1}{q}\sum_{r=1}^{q-1} \chi(r) T\pfrac{r}{q},
\ee
where
\[
T(y) = \sum_{m=0}^\infty \( \frac{\log(m+y)}{m+y} - \frac{\log (m+1)}{m+1}\).
\]
Here, the term $(m+1)^{-1}\log (m+1)$ is a convergence factor, included so that
the terms in the sum on $m$ are $O(m^{-2}\log m )$.  The advantage of using
\eqref{L1chi} and \eqref{L'1chi} is that for each $q$, there are only $q-1$ values of
$\psi$ and $q-1$ sums $T(r/q)$ to compute.  With these values in hand, there are, however,
still  $\gg q^2$ operations (additions, subtractions, multiplications, divisions) 
needed using a naive algorithm to compute all of the
numbers $L(1,\chi)$ and $L'(1,\chi)$. 
A significant speed-up is achieved by observing
that the vector of sums on $r$ on the right sides of \eqref{L1chi} and \eqref{L'1chi} are
discrete Fourier transform coefficients.  Specifically, let $g$ be a primitive root of $q$,
$\chi_1$ the character with $\chi_1(g)=e^{2\pi i/(q-1)}$ and for $1\le k\le q-1$, let $r_k$ be the integer in 
$[1,q-1]$ satisfying $g^k\equiv r_k\pmod{q}$.  The characters modulo $q$
are $\chi_0,\chi_1,\chi_1^2,\ldots,\chi_1^{q-2}$ and for $\chi=\chi_1^j$, the sum in \eqref{L1chi} is
$\sum_{k=1}^{q-1} e^{2\pi ijk/(q-1)} \psi(r_k/q)$ and the sum on $r$ in \eqref{L'1chi}
is $\sum_{k=1}^{q-1} e^{2\pi ijk/(q-1)} T(r_k/q)$.
Fast Fourier Transform (FFT) algorithms may be used to recover $L(1,\chi)$ and $L'(1,\chi)$ from
the vectors $(\psi(r_1/q),\ldots,\psi(r_{q-1}/q))$ and 
$(T(r_1/q),\ldots,T(r_{q-1}/q))$, respectively, with $O(q\log q)$ operations.

A program to compute the numbers $L(1,\chi)$ and $L'(1,\chi)$ was written in the C language, making use of
the  FFT library \texttt{fftw} \cite{fftw}.  Running on a Dell Inspiron 530 desktop computer with
Ubuntu Linux, 2GB RAM and a 2.0 GHz processor, the program computed $\EKq$ for all prime
$q\le 30000$ in 2 minutes.  All computations were performed using high precision arithmetic
(80-bit ``long double precision'' floating point numbers).
In order to handle very large $q$ (larger than about $5\times 10^7$)
a machine with more memory was required.  A suitably modified version
of the program was run on a large cluster computer, with 256GB RAM,
48 core AMD Opteron 6176 SE processors  (4 sockets, 12 cores/socket),
operating system  Ubuntu Linux 10.04.3 LTS x86\_64.  The computation of 
$\EKq$ for $q=964477901$ took 64 minutes of CPU time on this system.
This gave Theorem \ref{EK<0}.

\begin{lem}\label{EKq30000}
 For $q\le 30000$, we have $0.315 \log q \le \EKq\le 1.627 \log q$.
\end{lem}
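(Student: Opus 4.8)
The plan is to verify Lemma~\ref{EKq30000} computationally, using the fast algorithm for $\EKq$ described in the preceding subsection together with a rigorous bound on the numerical error. The underlying identity is \eqref{EKL}, namely $\EKq = \gamma + \sum_{\chi\ne\chi_0} L'(1,\chi)/L(1,\chi)$, evaluated via the explicit formulas \eqref{L1chi} and \eqref{L'1chi}. Since we must cover all $\pi(30000)=3245$ primes $q$, a naive term-by-term examination is out of the question, so the real content is (a) running the FFT-based program to produce the list of approximate values $\EKq$, and (b) certifying that the computed values, shifted by the accumulated round-off and truncation errors, still satisfy $0.315\log q \le \EKq \le 1.627\log q$ for every prime $q\le 30000$.

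First I would make the truncation errors explicit. The sums $T(r/q)$ and the digamma values $\psi(r/q)$ are each evaluated by a tail that is $O(m^{-2}\log m)$, so truncating at $M$ terms contributes an error $O(M^{-1}\log M)$ per value; choosing $M$ polynomially large in $q$ (the program uses high-precision ``long double'' arithmetic, giving roughly $18$ decimal digits) makes this negligible compared to the gap between $\EKq/\log q$ and the target constants $0.315$ and $1.627$. Next I would track the floating-point error through the FFT: the discrete Fourier transform of a length-$(q-1)$ vector of magnitude-$O(\log q)$ entries accumulates relative error $O(\eps_{\mathrm{mach}}\log q)$ by standard FFT error analysis, and then the division $L'(1,\chi)/L(1,\chi)$ is controlled because $|L(1,\chi)|$ is bounded below — for the quadratic character this is Lemma~\ref{beta}'s input $L(1,\chi)\ge \min(2\pi,\tfrac12\log q)q^{-1/2}$, and for complex $\chi$ one has the classical lower bound $|L(1,\chi)|\gg 1/\log q$, so no catastrophic cancellation occurs. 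Summing over the $q-2$ nonprincipal characters multiplies the per-character error by at most $q$, which is still far below the $\Theta(\log q)$ separation we need.

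With the error bars in hand, I would then simply exhibit the computed interval for each prime $q\le 30000$: the program outputs $\EKq$ to many digits, and one checks mechanically that $0.315\log q \le \EKq\le 1.627\log q$ holds in every case (the extremal primes — where $\EKq/\log q$ is smallest, respectively largest — pin down the constants $0.315$ and $1.627$, and a small cushion is left over the true minimum and maximum to absorb the certified error). The timing claim ($2$ minutes for all $q\le 30000$ on the stated Dell Inspiron, using \texttt{fftw}) is what makes this feasible, since $\sum_{q\le 30000} q\log q$ FFT operations is a very modest total.

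\textbf{The main obstacle} is the rigorous error certification rather than the computation itself: one must guarantee that \emph{every} reported value is within a provable tolerance, which requires a careful a priori bound on (i) the truncation of $T(y)$ and $\psi$, (ii) the accumulated FFT round-off over all $q$, and (iii) the amplification by the reciprocal $1/L(1,\chi)$, this last point leaning on the unconditional lower bounds for $|L(1,\chi)|$ (Dirichlet's class number formula for real $\chi$, the standard $|L(1,\chi)|\gg (\log q)^{-1}$ for complex $\chi$). Once those three bounds are combined and shown to be $o(\log q)$ — indeed $O(q^{-1})$ with the chosen truncation depth — the inequality in Lemma~\ref{EKq30000} follows from the finite table of computed values, with the constants $0.315$ and $1.627$ chosen with a deliberate margin above and below the empirical extrema.
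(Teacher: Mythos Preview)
Your approach is essentially the paper's: Lemma~\ref{EKq30000} is asserted purely on the basis of the FFT-based computation described just before it, with the extremal values identified as $\gamma_{19}/\log 19 = 1.626\ldots$ (maximum) and $\gamma_{17183}/\log 17183 = 0.315\ldots$ (minimum). The paper offers no rigorous error certification at all---no truncation bound for $T(y)$, no FFT round-off analysis, no lower bound on $|L(1,\chi)|$ to control the division---so your plan is in fact \emph{more} careful than what the authors actually do; they simply trust the long-double computation and report the extrema.
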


The largest value of $\EKq/\log q$ among $q\le 30000$
is $\gamma_{19}/\log 19 = 1.626\ldots$ and the smallest is 
 $\gamma_{17183}/\log 17183 = 0.315\ldots$.  Lemma \ref{EKq30000} suffices for 
the application to Theorem \ref{maintwo}.

In the next subsection, we will discuss more about the likely distribution of the Euler-Kronecker
constants.  Figure 1 displays a scatter plot of  $\EKq/\log q$ for the
primes $q\le 50000$.  

\begin{figure}[t]\label{EKq50000}
\epsfig{file=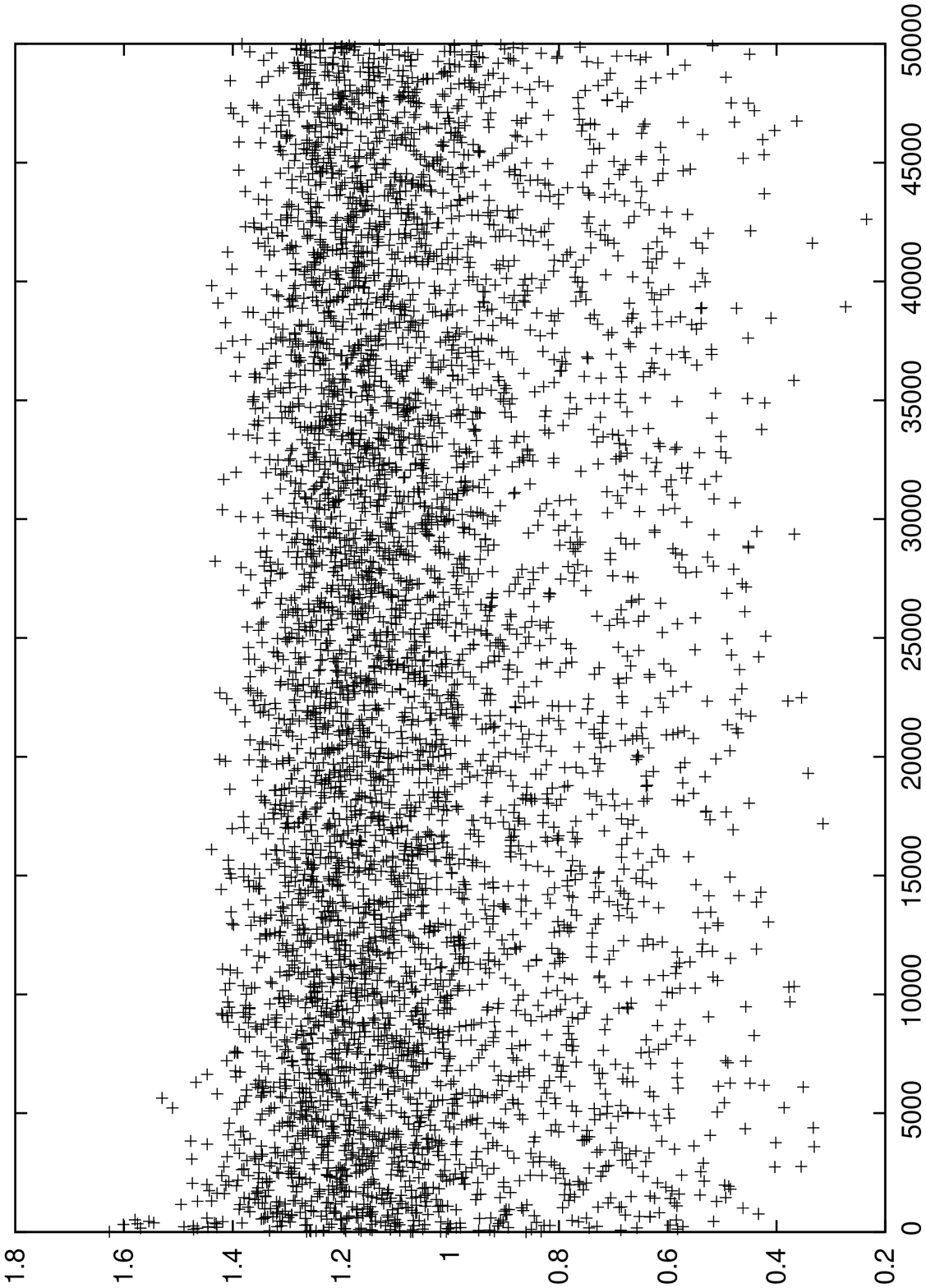, height=5.4in, width=3in, angle=270}
\bigskip
\medskip
\caption{$\EKq/\log q$ for $q\le 50000$}
\end{figure}

%
%
\subsection{Conditional bounds for $\EKq$}
%
%

\begin{lem}\label{xq2}
(i) For all $C>0$ and for all except $O(\pi(u)/(\log u)^C)$ primes $q\le u$,
\[
\EKq = 2\log q - q \sum_{\substack{p\le q^2
\\ p\equiv 1\pmod{q}}} \frac{\log p}{p-1} + O_C(\log\log q).
\]
(ii) Assuming ERH, the above inequality holds for all prime $q$ (the implied constant in the 
$O_C(\log\log q)$ term being absolute in this case).

(iii) Assume Conjecture EH and fix $C>0$ and $\eps>0$.  For all except 
$O(\pi(u)/(\log u)^C)$ primes $q\le u$,
\[
\EKq = (1+\eps)\log q - q \sum_{\substack{p\le q^{1+\eps}
\\ p\equiv 1\pmod{q}}} \frac{\log p}{p-1} + O_C(\log\log q).
\]
\end{lem}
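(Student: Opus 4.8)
The plan is to start from the explicit formula in Proposition \ref{EKsump}, namely
\[
\EKq = -\frac{\log q}{q-1} - (q-1)S(q) + \lim_{x\to\infty}\Big(\log x - (q-1)\sum_{\substack{p\le x\\ p\equiv 1\pmod q}}\frac{\log p}{p-1}\Big),
\]
and to show that the tail of the sum over $p$, restricted to $p > q^{1+\eps}$ (with $\eps=1$ in parts (i) and (ii)), contributes exactly $\log q^{1+\eps}$ up to an error of size $O(\log\log q)$, after which one absorbs the $-\frac{\log q}{q-1}$ and $-(q-1)S(q)$ terms into the error using Theorem \ref{thm:S(q)}(a) (which gives $(q-1)S(q)\le 45$). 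Concretely, writing $y=q^{1+\eps}$, the claim reduces to
\[
\lim_{x\to\infty}\Big(\log x - (q-1)\sum_{\substack{y<p\le x\\ p\equiv 1\pmod q}}\frac{\log p}{p-1}\Big) = \log y + O(\log\log q),
\]
equivalently that $(q-1)\sum_{y<p\le x}\frac{\log p}{p-1} = \log(x/y) + O(\log\log q)$ uniformly for $x\ge y$.

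First I would replace $\frac{1}{p-1}$ by $\frac{1}{p}$, at the cost of an error $(q-1)\sum_{p>y,\,p\equiv 1\,(q)}\frac{\log p}{p(p-1)}\ll (q-1)\sum_{m>q^\eps}\frac{\log(qm)}{q^2m^2}\ll \frac{\log q}{q^\eps}$, which is negligible. Then I apply partial summation to $(q-1)\sum_{y<p\le x,\,p\equiv1\,(q)}\frac{\log p}{p}$, using the estimate $\pi(t;q,1) = \frac{\li(t)}{\phi(q)} + E(q;t)$. The main term $(q-1)\int_y^x \frac{\log t}{t}\,d\frac{\li(t)}{\phi(q)}$ works out to $\int_y^x \frac{dt}{t} + O(1/\log y) = \log(x/y) + O(1/\log q)$ after the usual manipulation with $\li'(t)=1/\log t$. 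The error term contributes, after partial summation, something bounded by $(q-1)|E(q;x)|\frac{\log x}{x} + (q-1)\int_y^\infty |E(q;t)|\frac{\log t}{t^2}\,dt$ (the boundary term at $y$ being $\ll \log q/\sqrt q$ by Brun–Titchmarsh, cf. Proposition \ref{sumsmallp}). So everything comes down to bounding $(q-1)\int_{q^{1+\eps}}^\infty |E(q;t)|\frac{\log t}{t^2}\,dt$ by $O(\log\log q)$, either for almost all $q$, or for all $q$ under ERH, or for almost all $q$ with $\eps$ fixed under Conjecture EH.

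For part (ii), ERH gives $|E(q;t)|\ll \sqrt t\,(\log t)(\log qt)\ll \sqrt t (\log qt)^2$, whence $(q-1)\int_{q^2}^\infty \sqrt t (\log qt)^2 \frac{\log t}{t^2}\,dt \ll q\int_{q^2}^\infty \frac{(\log t)^3}{t^{3/2}}\,dt \ll q\cdot \frac{(\log q)^3}{q} = (\log q)^3$ — too big by a couple of logs, so here one must be more careful: instead of pulling $|E(q;t)|$ out, one uses the explicit-formula bound more cleverly, or, more simply, one notes that the quantity $\log x - (q-1)\sum_{p\le x}\frac{\log p}{p-1}$ equals (via Proposition \ref{EKsump} and \eqref{EKL}) a fixed constant $\EKq + \frac{\log q}{q-1} + (q-1)S(q)$, so one only needs a \emph{single} good value of $x$, say $x=q^{C'}$ for a suitable constant, and then bounds $(q-1)\sum_{q^2 < p\le q^{C'},\,p\equiv 1\,(q)}\frac{\log p}{p-1}$; here ERH gives the main term $\log(q^{C'}/q^2) = (C'-2)\log q$ plus $O_C(\log\log q)$ directly. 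Actually the cleanest route is to observe $\EKq = -\frac{\log q}{q-1} - (q-1)S(q) + \log x - (q-1)\sum_{p\le x,\,p\equiv1(q)}\frac{\log p}{p-1}$ for \emph{every} $x$, let $x\to\infty$ is not needed — pick $x$ large but fixed relative to $q$ — and then the estimate for $(q-1)\sum_{q^{1+\eps}<p\le x}$ under ERH/EH gives exactly $\log(x/q^{1+\eps}) + O_C(\log\log q)$, which telescopes to the claimed formula with the $(1+\eps)\log q$ coefficient. For part (i), Bombieri–Vinogradov (in the form: for all $C$, $\sum_{q\le u}\max_{t}|E(q;t)|\ll u(\log u)^{-C-2}$, applied dyadically in $t$) shows that for all but $O(\pi(u)(\log u)^{-C})$ primes $q\le u$ one has $\max_{q^2<t\le u^{A}}|E(q;t)|\ll t(\log t)^{-C'}/q$ or similar, enough to push the error into $O_C(\log\log q)$; part (iii) is the same argument with Conjecture EH in place of Bombieri–Vinogradov, which is what allows the range of summation to shrink from $q^2$ to $q^{1+\eps}$.

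The main obstacle is the uniformity and the bookkeeping in part (i): Bombieri–Vinogradov controls $\sum_{q\le u}\max_{t\le u}|E(q;t)|$, but the integral $\int_{q^2}^\infty |E(q;t)|\frac{\log t}{t^2}\,dt$ has an infinite tail, so one has to split at some power $t = u^{A}$ (using the trivial bound $E(q;t)\le \pi(t;q,1)\ll t/(q\log(t/q))$ beyond that, by Brun–Titchmarsh, which is more than enough for the tail) and handle the main range $q^2 < t\le u^A$ by a dyadic decomposition, losing at most $O(\log u)$ dyadic blocks — harmless since we have $(\log u)^C$ room to spare. Getting the constant in "$2\log q$" exactly right (rather than $2\log q + O(1)$) requires tracking the $+\log q^2$ that comes from $\log x - \log(x/q^2)$, together with the $-\frac{\log q}{q-1} = o(1)$ and $-(q-1)S(q) = O(1)$ terms, all of which are absorbed into $O_C(\log\log q)$ — so in fact the stated $2\log q$ should be read as the leading coefficient, with the genuine $O(1)$ pieces living inside the error term.
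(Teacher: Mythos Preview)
Your overall strategy is the same as the paper's: start from Proposition~\ref{EKsump}, absorb $(q-1)S(q)=O(1)$ via Theorem~\ref{thm:S(q)}, reduce via partial summation to controlling $E(q;t)$ for $t\ge q^{1+\eps}$, and invoke Bombieri--Vinogradov, ERH, or Conjecture~EH in the three parts respectively.

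The gap is at the \emph{lower} end of the $t$-range. Bombieri--Vinogradov requires $q\le\sqrt{t}/(\log t)^B$, so it says nothing for $q^2\le t<q^2(\log q)^{2B}$; under ERH, inserting $|E(q;t)|\ll\sqrt{t}\log q$ into the partial-summation expression starting at $y=q^2$ yields an error of order $(\log q)^2$, as you noticed. Your proposed repairs do not work: the claim that $\EKq=-\frac{\log q}{q-1}-(q-1)S(q)+\log x-(q-1)\sum_{p\le x,\,p\equiv1\,(q)}\frac{\log p}{p-1}$ holds ``for every $x$'' is false (Proposition~\ref{EKsump} gives only the limit), and truncating at $x=q^{C'}$ still leaves the ERH error at $(\log q)^2$. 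The paper's fix is to push the cutoff to $y=q^{1+\eps}(\log q)^A$ for a large constant $A$: then BV (or ERH) applied on $[y,\infty)$ contributes $O(1)$ in total, while the short gap $q^{1+\eps}<p\le y$ is handled by Brun--Titchmarsh, giving
\[
(q-1)\sum_{\substack{q^{1+\eps}<p\le y\\ p\equiv1\,(q)}}\frac{\log p}{p-1}\ \ll\ \log(y/q^{1+\eps})\ \ll_C\ \log\log q.
\]
This Brun--Titchmarsh step on the gap, together with $\log y=(1+\eps)\log q+O_C(\log\log q)$, is precisely where the $O_C(\log\log q)$ in the lemma comes from, and it is absent from your plan. (Your concern about the infinite upper tail is, by contrast, a non-issue: once $t\ge y$ the condition $q\le\sqrt{t}/(\log t)^B$ holds for all larger $t$, and the integral of $|E(q;t)|\log t/t^2$ over $[y,\infty)$ converges absolutely.)
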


\begin{proof}
Part (i) follows by a straightforward combination of Proposition \ref{EKsump} and the Bombieri-Vinogradov
theorem \cite[\S 28]{Da} (cf. Proposition 2 of \cite{Gr}).  
The latter states that for all $A>0$ there is a $B$ so that
\[
 \sum_{q\le \sqrt{x}/\log^B x} \left| E(x;q) \right|
\ll \frac{x}{(\log x)^A}.
\]
For any $x\ge z > q$, partial summation implies
\be\label{range}
\sum_{\substack{y\le p\le x \\ p\equiv 1\pmod{q}}} \frac{\log p}{p-1} = 
\frac{\log\pfrac{x-1}{y-1}}{q-1}+G_q(x,z),
\ee
where
\[
 G_q(x,z) = \left[ \frac{E(x;t)\log t}{t-1} \right]_{y}^x + \int_y^x \( \frac{\log t}{(t-1)^2}
-\frac{1}{t^2-t}\) E(t;q) \, dt.
\]
Let $B$ be the constant corresponding to $A=C+3$, let $z$ be large and put $y=z^2(\log z)^{2B+1}$.
For any $t\ge y$, $2z \le \sqrt{t}(\log t)^{-B}$ and so
\[
 S(t;z) :=\sum_{z<q\le 2z} |E(t;q)| \ll \frac{t}{(\log t)^{C+3}}.
\]
We obtain
\[
 \sum_{z<q\le 2z} \sup_{x>y}\left| G_q(y,x) \right| \ll \sup_{t \ge y}\frac{S(t;z)\log t}{t}
+ \int_y^\infty \frac{S(t;x)\log t}{t^2}\, dt
\ll \frac{1}{(\log z)^{C+1}}.
\]
Thus, the summand on the left is $\ge 1/(2z)$ for $O(z(\log z)^{-C-1})$ primes 
$q\in (z,2z]$.  Summing over dyadic intervals, we find that $\sup_{x>y} |G_q(y,x)|\ge 1/q$
for $O(\pi(u)/\log^C u)$ primes $q\le u$.
 For the other (non-exceptional) $q$, from Proposition
\ref{EKsump} and Theorem \ref{thm:S(q)}  we obtain
\[
 \EKq = 2\log (y-1) + O(1) - (q-1) \sum_{\substack{p\le y \\ p\equiv 1\pmod{q}}} \frac{\log p}{p-1},
\]
where $y \order q^2 (\log q)^{2B+1}$.  Finally, the Brun-Titchmarsh inequality and partial
summation gives 
\[
 \sum_{\substack{q^2 < p\le y \\ p\equiv 1\pmod{q}}} \frac{\log p}{p-1} \ll
\frac{\log (y/q^2)}{q} \ll_C \frac{\log\log q}{q}.
\]
This proves (i).  To obtain (ii), insert into \eqref{range} the bound
$E(t;q) \ll \sqrt{t} \log q$ valid under ERH (apply partial summation
to \cite[\S 20, (14)]{Da}), take
$y=q^2 (\log q)^{C+10}$ and argue as in part (i).
To prove (iii), substitute Conjecture EH for the Bombieri-Vinogradov Theorem
and take $y=z^{1+\eps}$ in the above argument.
\end{proof}

Part (ii) of Lemma \ref{xq2} may also be deduced from a general
bound for $\gamma_K^{\phantom{p}}$ due to Ihara \cite[Proposition 2]{I}.

\begin{lem}\label{admissible}
  For any $M>0$, there is an admissble set $\{a_1,\ldots,a_k\}$ with $\sum_i 1/a_i > M$. 
\end{lem}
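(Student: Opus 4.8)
The plan is to build the set greedily, mirroring the construction of the sequence $a(n)$ defined earlier in the paper, but now choosing elements to grow the harmonic sum $\sum_i 1/a_i$ as fast as possible rather than choosing them minimally. Recall that a set $\AA=\{a_1,\ldots,a_k\}$ is admissible precisely when the forms $n$ and $a_in+1$ ($1\le i\le k$) have no fixed prime factor; equivalently, for every prime $r$, the residues $\{0\}\cup\{-a_i^{-1}\bmod r : r\nmid a_i\}$ do not cover all of $\Z/r\Z$, so that there is some $n_0$ with $r\nmid n_0$ and $r\nmid(a_in_0+1)$ for all $i$. The key observation is that this condition is vacuous for every prime $r>k+1$: such an $r$ can be divided by at most the $k$ forms $a_in+1$ together with the form $n$, so at most $k+1$ residue classes mod $r$ are excluded, leaving a class free. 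Hence admissibility only constrains the residues of the $a_i$ modulo the finitely many primes $r\le k+1$.

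First I would fix a target $M$ and work primes one at a time. Start with $a_1=2$ (so $\{2\}$ is trivially admissible). Having built an admissible set $\{a_1,\ldots,a_j\}$ with each $a_i$ chosen so far, I want to adjoin a new small element $a_{j+1}$ without destroying admissibility. By the CRT, the ``forbidden'' residues of $a_{j+1}$ modulo the product $P=\prod_{r\le j+2} r$ of the relevant small primes form a proper subset of $\Z/P\Z$ whenever the set is still admissible (for each such $r$, there is at least one residue class for $a_{j+1}\bmod r$ that keeps a valid $n_0$ available mod $r$); so there is an entire arithmetic progression $a_{j+1}\equiv c\pmod P$ of admissible choices. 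Within that progression I pick the smallest available positive integer, which is at most $P+$(previous elements), hence bounded in terms of $j$ only. Crucially, the harmonic series $\sum 1/a_i$ over the elements produced this way diverges: each new element is $O_j(1)$, so after $k$ steps $a_k = O(e^{O(k)})$ at worst by the prime number theorem bound on $P=\prod_{r\le k+2}r = e^{(1+o(1))k}$, which still gives $\sum_{i\le k} 1/a_i \gg \sum_{i\le k} e^{-O(i)}$ — that is not obviously enough.

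So the main obstacle is quantitative: a crude greedy choice might let $a_k$ grow exponentially and the harmonic sum converge. The fix, which I expect to be the crux of the argument, is to allocate many new elements per prime. Concretely, once we only need to respect residues modulo primes $r\le R$ (with $R$ fixed), the admissible residues mod $P_R=\prod_{r\le R} r$ form a nonempty union of classes, say of relative density $\rho_R>0$; hence the admissible integers up to $y$ number $\asymp \rho_R y$, and their reciprocal sum up to $y$ is $\gg \rho_R \log y$, which $\to\infty$. The delicate point is that adding these elements may force $R$ upward (once $k$ exceeds $R-1$ the prime $R$ starts to matter), so one must interleave: choose a large parameter $R$, verify that the greedy sequence $a(n)$ of the paper stays admissible with $\{a(i):1\le i\le k\}$ for all $k$ (this is exactly the content of its definition, already granted in the excerpt), and then simply observe $\sum_{i\ge 2} 1/a(i)=\infty$. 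Indeed, A135311 satisfies $a(n)\sim c\,n\log n$ heuristically and provably $a(n)=O(n\log n)$ by a sieve/greedy counting argument: at stage $n$ the excluded residues modulo each $r\le n+1$ number at most $n+1$, so a positive proportion of integers remain admissible, forcing $a(n)\ll n\log n$; therefore $\sum 1/a(n)$ diverges like $\sum 1/(n\log n)$, and truncating at the point where the partial sum exceeds $M$ yields the desired finite admissible set. I would present the argument in this last form — cite the definition of $a(n)$, prove the upper bound $a(n)\ll n\log n$ via the ``at most $n+1$ forbidden classes mod each prime $r\le n+1$'' sieve estimate, and conclude divergence — since it is the cleanest route and reuses machinery already in place.
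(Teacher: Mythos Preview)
Your plan contains a genuine gap at the crucial step. You claim that because ``at most one residue class mod each prime $r\le n+1$ is excluded, a positive proportion of integers remain admissible, forcing $a(n)\ll n\log n$.'' But the density of integers avoiding one class modulo each prime $r\le n$ is not a fixed positive proportion; by Mertens it is $\prod_{r\le n}(1-1/r)\asymp 1/\log n$. More importantly, a density statement does \emph{not} bound the gap to the next allowed integer. After a CRT translation, that gap is exactly Jacobsthal's function $j\bigl(\prod_{r\le n}r\bigr)$, and the best unconditional bound (Iwaniec, 1978) is $j\bigl(\prod_{r\le n}r\bigr)\ll n^2$, which yields only $a(n)\ll n^3$ --- far too weak for $\sum 1/a(n)$ to diverge. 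So the ``sieve estimate'' you invoke does not deliver $a(n)\ll n\log n$; establishing that bound for the greedy sequence is, as far as I can see, at least as hard as the lemma itself.

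The paper does not touch the greedy sequence at all here. Instead it quotes a 1961 theorem of Erd\H os: if $p_1=3$ and $p_k$ is the smallest prime with $p_k\not\equiv 1\pmod{p_j}$ for every $j<k$, then $\sum_k 1/p_k=\infty$. One then takes $\BB=\{2(p_j+1):1\le j\le J\}$ with $J$ large enough that $\sum_{b\in\BB}1/b>M$, and checks admissibility of $\BB$ directly: for $r=p_j$ one finds that no element of $\BB$ lies in the residue class forcing $r\mid F(n)$ (precisely because $p_i\not\equiv 1\pmod{p_j}$ for all $i$), and for primes $r$ outside the sequence one argues similarly. The whole burden of producing a divergent reciprocal sum is thus carried by Erd\H os's theorem, not by any growth bound on a greedy construction.
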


\begin{proof}
 Let $p_1=3$ and, recursively for each $k\ge 2$, let $p_k$ be the smallest prime
for which $p_k\not\equiv 1\pmod{p_j}$ for all $j<k$.  Thus $p_2=5$, $p_3=17$, $p_4=23$, etc.
Erd\H os in \cite{Erd}, answering a question of S. Golomb, proved 
that $\sum_{k=1}^\infty 1/p_k$ diverges.  For a given $M$, let $J$ be so large that
if $\BB = \{ 2(p_j+1) : 1\le j\le J\}$, then  $\sum_{b\in \BB} 1/b > M$.
We now deduce that $\BB$ is admissible.
Let $F(n)=n\prod_{b\in \BB} (bn+1)$.  Observe that
by construction, if $r$ is prime and $r=p_j$ for some $j$, then
none of the elements of $\BB$ are congruent to $2\pmod{r}$.  Hence, if $4n\equiv -1\pmod{r}$, 
then $r\nmid F(n)$.  If $r$ is a prime and $r\ne p_j$ for every $j$, then
 none of the elements of $\BB$ are congruent to $1\pmod{r}$.  Consequently,
 if $2n\equiv -1\pmod{r}$, then $r\nmid F(n)$.  
\end{proof}

According to Granville \cite{Gr}, Lemma \ref{admissible} was conjectured by Erd\H os in 1988.
A proof is given in \cite[Theorem 3]{Gr}.
We showed above that Lemma \ref{admissible} is actually a 
simple corollary of Erd\H os' 1961 paper \cite{Erd}.

\begin{proof}[Proof of Theorem \ref{HL-EK} and Proposition \ref{2089}]
Let $M \ge 0$ be arbitrary.  Using Lemma \ref{admissible}, there is an admissible
 set $\{a_1,\ldots,a_k\}$ so that $\sum_i 1/a_i > M+2$.   By Lemma \ref{xq2} (i),
for all but $O(u/\log^{k+2} u)$ primes $q\le u$, 
\[
 \EKq = 2\log q +O_M(\log\log q) -  q \sum_{\substack{p\le q^2
\\ p\equiv 1\pmod{q}}} \frac{\log p}{p-1}.
\]
Assuming Conjecture HL, there are $\gg u/\log^{k+1} u$ primes $q\le u$ for which
$a_iq+1$ is prime for $1\le i\le k$.  For such primes $q > a_k+1$,
\[
 q \sum_{\substack{p\le q^2 \\ p\equiv 1\pmod{q}}} \frac{\log p}{p-1} \ge
\sum_{i=1}^k \frac{\log q}{a_i} > (M+2) \log q.
\]
Theorem \ref{HL-EK} follows.

Proposition \ref{2089} follows by taking $M=0$ in the above argument and noting that
we may take an admissible set with $k=2089$.
\end{proof}

\begin{proof}[Proof of Theorem  \ref{EH-EK}]
 Fix $\eta>0$.  Assuming Conjecture EH and using Lemma \ref{xq2} (iii),
we see that for all but $O(\pi(u)/\log^C u)$ primes $q\le u$,
\be\label{EHEK1}
 \EKq = (1+\eta^2) \log q + O_C(\log\log q) - q  
\sum_{\substack{p\le q^{1+\eta^2} \\ p\equiv 1\pmod{q}}} \frac{\log p}{p-1}.
\ee
On the other hand, by Lemma \ref{sieve} (with $\delta=\eta/2$ and $\eps=\eta^2$),
for all but $O(\eta \pi(u))$ primes $p\le u$, the above sum on $p$ is $\le (\eta\log q)/q$.
Hence, taking $C=1$, for all but $O(\eta \pi(u))$ primes $p\le u$,
$(1-\eta)\log q \le \EKq \le (1+\eta)\log q$ for large enough $q$.  
As $\eta$ is arbitrary, part (i) follows.

To show part (ii) concerning limit points of $\EKq/\log q$, 
start with \eqref{EHEK1} and let $\eps=\eta^2$.
Let $\AA=\{a_1,\ldots,a_k\}$ be an admissible set and let $m(\AA)=\sum_i 1/a_i$.
Assuming Conjecture HL, there are $\gg u/\log^{k+1} u$ primes $q\le u$ such that
$a_iq+1$ is prime for $1\le i\le k$.   By sieve methods \cite[Theorem 5.7]{HR}, 
the number of primes $q\le u$ for which
$a_iq+1$ is prime $(1\le i\le k)$ and $bq+1$ is also prime is $O(\frac{b}{\phi(b)}u/\log^{k+2} u)$, where
the implied constant depends on $\AA$.
Summing over even $b\le q^{\eps}$, $b\in \AA$, we find that there are
$O(\eps u/\log^{k+1} u)$ primes $q\le u$ with  $bq+1$ prime for some $b\le q^{\eps}$,
$b\not\in \AA$.  If $\eps$ is small enough, depending on $\AA$, then there are $\gg u/\log^{k+1}$
primes $q\le u$ for which $qa_i+1$ is prime ($1\le i\le k$) and $qb+1$ is composite
for all $b\le q^{\eps}$ such that $b\not\in \AA$.  For such $q$, \eqref{EHEK1} with $C=k+2$ implies that
\[
 \EKq = (1+\eps-m(\AA))\log q + O_k(\log\log q).
\]
As $\eps$ is arbitrary, we see that $1-m(\AA)$ is a limit point of $\{ \EKq/\log q: q\text{ prime}\}$.
Finally, it follows immediately from Lemma \ref{admissible} that $\{m(\AA): \AA\text{ admissible}\}$
is dense in $[0,\infty)$.  Indeed, given any $x>0$ and $\delta>0$, there is an admissible set of
integers $>1/\delta$ with $m(\AA)>x$.  As any subset of an admissible set is also admissible,
there is a subset $\AA'$ of $\AA$ with $|m(\AA')-x|<\delta$.
\end{proof}

%
%
\section{Upper bounds for $S(q)$}\label{Sec:S(q)}
%
%

We will give explicit upper bounds in Theorem \ref{thm:S(q)} for $S(q)$, making  use of
explicit estimates for prime numbers from \cite{RS}.  Note that
$f_p\ge 2$ implies that $q|(p^{f_p}-1)/(p-1)$, that is, 
\be\label{np}
\frac{p^{f_p}-1}{p-1} = q n_p, \quad n_p \ge 1.
\ee

\begin{lem}
\label{moeizaam}
For $x\ge 2$,
\[
\log x - 0.605 \le \sum_{p\le x}\frac{\log p}{p-1}\le \begin{cases}
\log x - 0.142 & (x\ge 9) \\
\log x - \tfrac12  & (x\ge 467.4). \end{cases}
\]
Also, 
\[
 \sum_{p \ge x}\frac{\log p}{p^3-1}\le \frac{0.6}{x^2} \quad (x > 2).
\]
\end{lem}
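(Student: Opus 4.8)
The plan is to reduce the first two displays to an effective form of Mertens' estimate for $\sum_{p\le x}(\log p)/p$, and the third to an explicit upper bound for $\theta(t):=\sum_{p\le t}\log p$, drawing the effective prime estimates from Rosser--Schoenfeld \cite{RS}. For the first two bounds I would begin from the exact identity
\[
\sum_{p\le x}\frac{\log p}{p-1}=\sum_{p\le x}\frac{\log p}{p}+\sum_{p\le x}\frac{\log p}{p(p-1)},
\]
obtained from $1/(p-1)=\sum_{k\ge1}p^{-k}$. Put $C=\sum_p(\log p)/(p(p-1))$; the second sum equals $C-T(x)$ with $T(x)=\sum_{p>x}(\log p)/(p(p-1))\ge0$, and $T(x)$ is bounded effectively by $\sum_{n>x}(\log n)/(n(n-1))\ll(\log x)/x$. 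For the first sum one has $\sum_{p\le x}(\log p)/p=\log x-\gamma-C+r(x)$ with $r(x)$ explicitly small: the constant is $-\gamma-C$ because $\sum_{n\le x}\Lambda(n)/n=\log x-\gamma+o(1)$ (as used already in the proof of Proposition~\ref{EKsump}) and the prime-power part with exponent $\ge2$ converges to $C$, while effective control of $r(x)$ comes from the Rosser--Schoenfeld bounds for $\theta$ via partial summation. Substituting, the $C$'s cancel and
\[
\sum_{p\le x}\frac{\log p}{p-1}=\log x-\gamma+r(x)-T(x).
\]
Since $\gamma=0.5772\ldots$, the lower bound is equivalent to $r(x)-T(x)\ge\gamma-0.605\approx-0.028$, and the two upper bounds to $r(x)-T(x)\le\gamma-0.142\approx0.435$ and $r(x)-T(x)\le\gamma-\tfrac12\approx0.077$, respectively.

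For the last inequality I would use partial summation directly against $\theta$:
\[
\sum_{p>x}\frac{\log p}{p^3-1}=-\frac{\theta(x)}{x^3-1}+3\int_x^\infty\frac{t^2\,\theta(t)}{(t^3-1)^2}\,dt.
\]
Inserting $\theta(t)\le\lambda t$ and $\theta(x)\ge\mu x$ from \cite{RS} (with $\lambda,\mu$ tending to $1$) and the elementary evaluation $3\int_x^\infty t^3(t^3-1)^{-2}\,dt=\tfrac32x^{-2}\bigl(1+O(x^{-3})\bigr)$ gives a bound of the shape $\bigl(\tfrac32\lambda(1+o(1))-\mu\bigr)x^{-2}$, which drops below $0.6\,x^{-2}$ once $x$ exceeds an explicit threshold $X_2$ of moderate size (the limiting constant being $\tfrac12$, so $X_2$ cannot be taken too small).

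Each part then leaves a bounded initial range, which I would settle by a finite numerical check. The key reduction is that on an interval $[p_k,p_{k+1})$ between consecutive primes, $x\mapsto\sum_{p\le x}(\log p)/(p-1)$ is constant while $\log x$ increases; hence the lower bound reduces to verifying $\sum_{p\le p_k}(\log p)/(p-1)\ge\log p_{k+1}-0.605$, and the upper bounds to verifying $\sum_{p\le p_k}(\log p)/(p-1)\le\log p_k-0.142$ for primes $p_k\ge11$ (resp.\ $\le\log p_k-\tfrac12$ for $p_k\ge479$), after separately inspecting the two boundary intervals $[9,11)$ and $[467.4,479)$. For the third inequality it likewise suffices to check $\sum_{p\ge p_k}(\log p)/(p^3-1)\le0.6/p_k^2$ for the finitely many primes $p_k\le X_2$, with the infinite tail of each such sum bounded explicitly.

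The main obstacle is quantitative rather than structural. The slack in the lower bound is only about $0.605-\gamma\approx0.03$, and because $\sum_{p\le x}(\log p)/(p-1)$ approaches $\log x-\gamma$ slowly---with the largest values of $\log x-\sum_{p\le x}(\log p)/(p-1)$ occurring just past sizeable prime gaps (numerically the binding one is the gap of $34$ following $x=1327$)---the crude error $r(x)=O(1/\log x)$ is not small enough in the relevant range; one must use the genuinely sharp Rosser--Schoenfeld estimates (bounds of the shape $|\theta(x)-x|\ll x/\log^k x$, or small explicit $\epsilon$ with $|\theta(x)-x|<\epsilon x$ for $x$ beyond a moderate bound) so that the asymptotic argument takes over early enough for the residual verification to be a routine finite computation. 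The upper bounds and the third inequality are handled the same way, with somewhat less delicate margins.
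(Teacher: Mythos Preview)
Your proposal is correct and takes essentially the same approach as the paper: the same decomposition $(\log p)/(p-1)=(\log p)/p+(\log p)/(p(p-1))$ combined with the explicit Rosser--Schoenfeld bounds for $\sum_{p\le x}(\log p)/p$ and $\theta$, partial summation against $\theta(t)$ for the tail $\sum_{p>x}(\log p)/(p^3-1)$, and direct numerical verification for the initial ranges. The only notable difference is that for the finite check of the tail sum the paper evaluates $\sum_{p>x}(\log p)/(p^3-1)$ via the closed form $-\zeta'(3)/\zeta(3)-\sum_{p\le x}(\log p)/(p^3-1)$, which avoids having to bound an infinite remainder explicitly.
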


\begin{proof}
For the first estimate, we note that
$$
\sum_{p\le x}\frac{\log p}{p-1} =  \sum_{p\le x}\frac{\log
  p}{p}+\sum_{p\le x} \frac{\log p}{p(p-1)}.
$$
The latter sum can be easily bounded by $0.756$. 
The first estimate then is derived on invoking
\cite[Theorems 6, 21]{RS} to deal with $x\ge 1000$ and direct numerical calculation
for smaller $x$.
For $x\ge 7481$ one has $0.98x\le \sum_{p\le x}\log p\le 1.01624x$, as was
shown by Rosser and Schoenfeld \cite[Theorems 9 and 10]{RS}. 
{From} this one easily infers that for $x\ge 7481$
$$\sum_{p>x}\frac{\log p}{p^3-1}\le \frac{x}{x^3-1} 
\Big(-0.98+1.01624\pfrac32 \frac{x^3}{(x^3-1)}\Big).$$
For $k=2$, the right side is $\le 1.0525 x^{-1}$ and for $k=2$, the right side is
$\le 0.545 x^{-2}$.  For $x<7481$, we explicitly calculate the sum 
using
\[
 \sum_{p > x}\frac{\log p}{p^3-1} = -\frac{\zeta'(3)}{\zeta(3)} - 
\sum_{p\le x} \frac{\log p}{p^3-1}. \qedhere
\]
\end{proof}


\subsection{A simple upper bound}


\begin{lem}
\label{sommieb}
Let $q$ be a prime with $q \ge 5$. We have
$$S(q)\le \frac{\log q+1}{2q}.$$
\end{lem}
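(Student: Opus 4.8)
The plan is to estimate $S(q)=\sum_{p\ne q,\, f_p\ge 2}\frac{\log p}{p^{f_p}-1}$ by splitting the sum according to the size of $f_p$. Recall from \eqref{np} that $f_p\ge 2$ forces $q\mid \frac{p^{f_p}-1}{p-1}$, so in particular $p^{f_p}-1\ge q(p-1)$, and more usefully $p^{f_p}\ge q(p-1)+1 > q$ whenever $p\ge 2$. So every term in $S(q)$ has denominator exceeding $q$, which already gives decay; the work is to sum the $\log p$ in the numerators efficiently.

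First I would isolate the contribution of $f_p=2$, which should be the dominant part. If $f_p=2$ then $p^2-1=(p-1)(p+1)$ is divisible by $q$, and since $q$ is prime and $q\nmid (p-1)$ (as $f_p\ne 1$), we must have $q\mid p+1$, i.e.\ $p\equiv -1\pmod q$. Thus
\[
\sum_{p\ne q,\, f_p=2}\frac{\log p}{p^2-1}=\sum_{\substack{p\equiv -1\pmod q}}\frac{\log p}{p^2-1}\le \sum_{\substack{p\equiv -1\pmod q \\ p\ge q-1}}\frac{\log p}{p^2-1}.
\]
The smallest such prime is $\ge q-1$, so I would bound this by an integral / partial-summation estimate against $\sum_{p\ge q-1}\frac{\log p}{p^2-1}$, using Lemma \ref{moeizaam}-type estimates for $\sum_{p\le x}\frac{\log p}{p-1}$ (or the cruder $\sum_{p\le x}\log p\ll x$), which yields a bound of the shape $O\!\left(\frac{\log q}{q}\right)$ with a small explicit constant. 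Being slightly more careful — only primes in a single residue class mod $q$ contribute — one can hope to win roughly a factor, aiming for something like $\frac{\log q}{2q}$ from this piece.

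Next I would handle $f_p\ge 3$. Here $p^{f_p}\ge p^3$ and also $p^{f_p}\ge q$, and I would use $\frac{\log p}{p^{f_p}-1}\le \frac{\log p}{p^3-1}$ combined with the tail estimate $\sum_{p\ge x}\frac{\log p}{p^3-1}\le \frac{0.6}{x^2}$ from Lemma \ref{moeizaam}. For the primes $p$ with $p^3 > q$, i.e.\ $p> q^{1/3}$, this tail is $\le \frac{0.6}{q^{2/3}}$, which is far smaller than $\frac{\log q}{q}$ for large $q$ but might not be negligible for moderate $q$; for the finitely many small primes $p\le q^{1/3}$ with $f_p\ge 3$ we still have $p^{f_p}\ge q$, so each such term is at most $\frac{\log p}{q-1}$, and summing $\log p$ over $p\le q^{1/3}$ costs only $O(q^{1/3}/q)=O(q^{-2/3})$. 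Either way the $f_p\ge 3$ contribution is $O(q^{-2/3})$, comfortably absorbed.

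Combining the $f_p=2$ bound of roughly $\frac{\log q}{2q}+O(1/q)$ with the $O(q^{-2/3})$ tail, and then checking the resulting constants hold down to $q=5$ (possibly with a direct numerical check for the smallest few primes, where $S(q)$ is a short explicit sum), gives $S(q)\le \frac{\log q+1}{2q}$. The main obstacle is bookkeeping the explicit constants: one has to be careful that the partial-summation bound for $\sum_{p\equiv -1\pmod q}\frac{\log p}{p^2-1}$, starting from $p\ge q-1$, really comes in under $\frac{\log q}{2q}$ after adding the lower-order $f_p\ge 3$ terms, and that nothing goes wrong for small $q$ — so the delicate step is optimizing the $f_p=2$ estimate rather than any conceptual difficulty.
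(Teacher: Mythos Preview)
Your decomposition has the right shape, but you have the weights backwards and your tail estimate is too weak to close the argument.

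First, the $f_p=2$ piece is \emph{not} dominant. Since $p\equiv -1\pmod q$ forces $p\ge 2q-1$, each term has $p^2-1\gg q^2$, and summing over $p=2kq-1$ gives $\sum_k \frac{\log(2kq)}{(2kq)^2}=O\!\left(\frac{\log q}{q^2}\right)$. So this piece is negligible compared to $\frac{\log q}{2q}$; aiming for $\frac{\log q}{2q}$ here is harmless as an upper bound but misleading about where the real content lies.

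The genuine gap is in your treatment of $f_p\ge 3$. You split at $q^{1/3}$ and, for $p\le q^{1/3}$, use only $p^{f_p}-1\ge q-1$, giving $\sum_{p\le q^{1/3}}\frac{\log p}{q-1}\asymp q^{-2/3}$; for $p>q^{1/3}$ your cubic tail is $\le 0.6/q^{2/3}$. But $q^{-2/3}$ is \emph{not} absorbed by $\frac{\log q+1}{2q}$: in fact $q^{-2/3}/\bigl(\frac{\log q}{q}\bigr)=q^{1/3}/\log q\to\infty$. So the claimed bound fails for all large $q$.

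The repair uses precisely the inequality you wrote down at the outset but then discarded: $p^{f_p}-1\ge q(p-1)$. For small $p$ this gives $\frac{\log p}{p^{f_p}-1}\le \frac{1}{q}\cdot\frac{\log p}{p-1}$, and now Lemma~\ref{moeizaam} yields $\frac{1}{q}\sum_{p\le X}\frac{\log p}{p-1}\le \frac{\log X}{q}$ with good constants. To make the cubic tail over large primes land at size $O(1/q)$ rather than $O(q^{-2/3})$, you must split at $X=\sqrt{q}$ (so the tail is $\le 0.6/q$) rather than at $q^{1/3}$. With that split the $f_p\ge 3$ contribution is $\le \frac{1}{q}\bigl(\tfrac12\log q - c\bigr)+\frac{0.6}{q}$, which is the actual main term, and the $f_p=2$ piece adds only $O(\log q/q^2)$. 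The paper carries this out for $q\ge 83$ and checks smaller $q$ directly.
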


\begin{proof}
First, if $f_p=2$, then  $p=2kq-1$ for a positive integer
$k$.  As $p\ge 13$, we have
$p^2-1\ge 6(p+1)^2/7$.  Thus,
\begin{align*}
\label{222}
\sum_{p\equiv -1 \pmod{q}} \frac{\log p}{p^2-1} &\le \frac{7}{6}
\sum_{k=1}^\infty \frac{\log (2kq)}{4k^2 q^2} \\ &=
\frac{(7/6) \( \zeta(2) \log(2q) - \zeta'(2) \)}{4q^2} \le
\frac{0.48\log q + 0.61}{q^2}.
\end{align*}
Next, suppose $p>q$ and $f_p\ge 3$.  Combining the latter estimate and Lemma \ref{moeizaam},
we conclude that
\be\label{p>q}
 \sum_{p>q,f_p\ge 2} \frac{\log p}{p^{f_p}-1} \le \frac{0.48\log q+1.21}{q^2}.
\ee

Now suppose $p<q$ (so that $f_p\ge 3$).  If $q\ge 83$, by Lemma \ref{moeizaam}
\[
S'(q) =  \sum_{p<q} \frac{\log p}{p^{f_p}-1} \le \frac{1}{q}\sum_{p<\sqrt{q}}
\frac{\log p}{p-1} + \sum_{p>\sqrt{q}} \frac{\log p}{p^3-1}\le
\frac{0.5\log q+0.458}{q}.
\]
On combining this estimate with \eqref{p>q} yields the claimed bound for $q\ge 83$.
For $5\le q< 83$,
direct calculation shows that $S'(q) \le \frac{\log q-0.5}{2q}$ and the claimed
bound on $S(q)$ follows from \eqref{p>q}.
\end{proof}

Lemma \ref{sommieb} is strong enough in order to prove 
Theorem \ref{maintwo}.  However, with a refined analysis, we can obtain a sharper inequality
when $q$ is large.


\subsection{Refined upper bound}\label{fluca}


Note that in case $q$ is a Mersenne prime we have
$$S(q)\ge \frac{\log 2}{2^{f_2}-1}=\frac{\log 2}{q}.$$ Actually, the only
$q$ we have been able to find for which $S(q)>(\log 2)/q$
are the Mersenne primes. It thus is conceivable that if $q$ is not a 
Mersenne prime, then always $S(q)< (\log 2)/q$. For a given $\epsilon>0$
it also appeared to us that the primes $q$ for which $S(q)>\epsilon/q$ have density zero. In what follows, we prove that this is the case.
In general $S(q)$ is relatively large if $q$ almost equals a number
of the form $p^r-1$ with $p$ small. For example, if $2q=3^r-1$ for some $r$ (e.g. 
when $r=3,7,13,71$),
then $S(q)>(\log 3)/(2q)$. The above remarks show that the upper bound in 
the first part of Theorem \ref{thm:S(q)}, except
for the constant, is likely optimal.

\begin{proof}[Proof of Theorem \ref{thm:S(q)}]
We prove both (a) and (b) simultaneously. 
If $5 \le  q\le 10^{30}$, Lemma \ref{sommieb} gives $S(q)<35.1/q$ and (a) follows.  
Now suppose $q>10^{30}$.
We first consider three ranges for $p$:
\begin{enumerate}
 \item[(i)] $p>q$,
\item[(ii)] $p<q$ and $f_p\le F = \lceil \frac{\log q}{3\log\log q} \rceil$,
\item[(iii)] $f_p \ge F+1$ and $p>\log^4 q$.
\end{enumerate}
 Inequality \eqref{p>q} gives a good bound for the contribution of the primes in 
the range (i) to $S(q)$.
Note that
given $f\ge 3$, there are at most $f-1$ primes $p<q$ with $f_p=f$. 
By \eqref{np}, $q \le 2p^{f-1}$, hence the contribution to $S(q)$ from a given $f$ is
\[
 \le \frac{(f-1) \log [ (q/2)^{1/(f-1)} ]}{(q/2)^{f/(f-1)}-1} \le 
2.83 \frac{\log q}{q^{1 + \frac{1}{f-1}}}.
\]
If $f\le F$, then $q^{\frac{1}{f-1}} \ge \log^3 q$ and the
contribution to $S(q)$ from such $f$ is 
\be\label{caseii}
 \le \frac{2.83 (F-2)}{q\log^2 q} \le \frac{2.83}{3q(\log q)\log\log q}.
\ee
For $p$ counted in the range (iii), $p^{f_p-1} \ge p^F \ge q^{4/3}$.  By Lemma \ref{moeizaam},
the contribution
to $S(q)$ from such $p$ is
\be\label{caseiii}
\le \frac{1}{q^{4/3}} \sum_{\log^4 q < p < q} \frac{\log p}{p-1} \le \frac{\log q}
{q^{4/3}}.
\ee
By \eqref{p>q}, \eqref{caseii} and \eqref{caseiii}, the contribution to
$S(q)$ from $p$ in ranges (i)--(iii) is
\be\label{casei-iii}
O\pfrac{1}{q\log q} \qquad \text{ and also } \qquad \le \frac{1}{310 q}.
\ee

The primes $p$ not considered in ranges (i)--(iii) satisfy $p\le \log^4 q$ and
$f_p > F$.
We now take a brief interlude to prove (b).  The contribution to $S(q)$ from those $p$
with $f_p \ge F' = \lceil \frac{2\log q}{\log 2} \rceil$ is $\le 2\sum_p (\log p)p^{-F'}
=O(q^{-2})$.  As $f_p|(q-1)$, we have dealt with all ranges unless $q-1$ has a divisor in
$(F,F')$.  But this is rare; specifically, by Theorems
1 and 6 of \cite{Hxyz}, the number of $q\in (x,2x]$ with such a divisor is 
$O(\pi(x) (\log\log\log x/\log\log x)^{-0.086})$.  By \eqref{casei-iii}, (b) follows.

Next, we continue proving (a), by considering further ranges:
\begin{enumerate}
 \item[(iv)] $p\le e^{41}$,
\item[(v)] $e^{41} < p \le \log^4 q$ and $n_p \ge \min(p,f_p)$,
\item[(vi)] $e^{41} < p \le \log^4 q$  and $n_p < \min(p,f_p)$.
\end{enumerate}
Trivially, by Lemma \ref{moeizaam}, the contribution to $S(q)$ in case (iv) is
\be\label{caseiv}
\le \frac{1}{q} \sum_{p\le e^{41}} \frac{\log p}{p-1} \le \frac{40.5}{q}.
\ee
For ranges (v) and (vi), observe that $\log q \ge e^{41/4}$.  Since 
$f_p \ge \frac{\log q}{\log p}$, the contribution to $S(q)$ in case (v) is
\be\label{casev}
\begin{split}
&\le \frac{1}{q\log q} \sum_{e^{41}<p\le \log^4 q} \frac{\log^2 p}{p-1} +
\sum_{p>e^{41}} \frac{\log p}{qp(p-1)} \\
&\le \frac{4\log\log q (4\log\log q - 40.895)}{q\log q} + \frac{10^{-10}}{q} 
\le \frac{1}{416q}.
\end{split}
\ee
Here we used again Lemma \ref{moeizaam}, together with the fact that the maximum of
$x(x-b)e^{-x/4}$ occurs at $x=(b+8+\sqrt{b^2+64})/2$ (here $x=4\log\log q$).

Now consider range (vi). We will show that $f_p$ is prime. Indeed, assume that $f_p$ is composite.
Then 
$$
\frac{p^{f_p}-1}{p-1}=\prod_{\substack{d\mid f_p\\ d>1}} \Phi_d(p),
$$
where $\Phi_d(X)\in \Z[X]$ is the $d$th cyclotomic polynomial.
There exists some divisor $d_0>1$ of $f_p$ such that $q\mid \Phi_{d_0}(p)$ (in fact
$d_0=f_p$, but this is not needed for the proof). Hence,
$$
n_p\ge \prod_{\substack{d\mid f_p\\ d\ne 1,d_0}}\Phi_d(p).
$$
Since $f_p$ is not prime, the number $f_p$ has at least three divisors. Let $d_1>1$ be any divisor of $f_p$ different from $d_0$. Then
$$
n_p\ge \Phi_{d_1}(p)>(p-1)^{\phi(d_1)}\ge p-1,
$$
so $n_p\ge p$, a contradiction. Hence, $f=f_p$ is a prime factor of $q-1$. 
By Fermat's Little Theorem, 
$p^f\equiv p \pmod{f}$. Further, if $p\equiv 1({\rm mod~}f)$, then 
$(p^f-1)/(p-1)$ is a multiple of $f$. Otherwise, $p-1$ is invertible modulo $f$, and since $p^f-1\equiv p-1({\rm mod~}f)$, we get that $(p^f-1)/(p-1)$ is congruent to $1$ modulo $f$. Hence,
$$
qn_p=\frac{p^f-1}{p-1}\equiv 0,1 \pmod{f},
$$
and since $q\equiv 1({\rm mod~}f)$, we conclude that $n_p\equiv 0,1({\rm mod~}f)$.
But $n_p<f$, hence $n_p=1$ and
\be\label{pfq}
 \frac{p^f-1}{p-1} = q.
\ee
On writing the left hand side as $\sum_{j=0}^{f-1}p^j$, we that
in particular, $p|(q-1)$.  Since $q-1$ has at most $\frac{\log q}{\log\log q}$ prime
factors $>\log q$, the contribution to $S(q)$ from $p\in (\log q,\log^4 q]$ is
\be\label{casevi-1}
 \le \frac{\log q}{q\log\log q} \cdot \frac{4\log\log q}{\log q-1} \le \frac{4.004}{q}.
\ee
Let $\mathcal{P}$ be the set of primes satisfying \eqref{pfq} which are
in the interval $(e^{41},\log q]$.
We cover the interval in dyadic intervals of the form ${\mathcal I}_k=[2^k,2^{k+1})$ 
with $2^k\le \log q$,
and we look at ${\mathcal P}_k={\mathcal P}\cap {\mathcal I}_k$.
We will show  below that ${\mathcal P}_k$ has 
at most one element, and hence
\[
\frac{1}{q} \sum_{p\in\mathcal{P}} \frac{\log p}{p} \le 
\frac{1}{q} \sum_{k\ge 59} \frac{k\log 2}{2^k-1} \le \frac{1}{10^{15} q}.
\]
Combined with \eqref{casei-iii}, \eqref{caseiv}, \eqref{casev} and \eqref{casevi-1}, this
proves the theorem.

Now assume that ${\mathcal P}_k$ has at least two elements for some $k$, so that $k\ge 59$.
Let $p_1<p_2$ be any two elements in ${\mathcal P}_k$ with
\[
 q = \frac{p_1^{f_1}-1}{p_1-1} = \frac{p_2^{f_2}-1}{p_2-1}.
\]
Since the function $f\mapsto (p^f-1)/(p-1)$ is increasing for all fixed $p$, it follows that 
$f_1>f_2$. Now 
\begin{equation}
\label{eq:p1p2}
(p_2-1)p_1^{f_1}-(p_1-1)p_2^{f_2}=p_2-p_1.
\end{equation}
Thus, 
\begin{equation}
\label{eq:b1}
\left|\frac{(p_1-1)}{(p_2-1)} p_2^{f_2} p_1^{-f_1}-1\right|=\frac{p_2-p_1}{(p_2-1)p_1^{f_1}}< \frac{1}{p_1^{f_1}}\le \frac{1}{2^{kf_1}}.
\end{equation}
On the left, we use a lower bound for a linear form in three logarithms. Note that 
since $p_2>p_1$ this expression is not zero.
Now all three rational numbers $(p_1-1)/(p_2-1),~p_1$ and $p_2$ have height $<2^{k+1}$. Thus, Matveev's bound from \cite{Mat} (see also Theorem 9.4 in \cite{BLM}) tells us at once that
\begin{eqnarray}
\label{eq:b2}
\log \left|\frac{(p_1-1)}{(p_2-1)} p_2^{f_2} p_1^{-f_1}-1\right| & > & -1.4\times 30^{6}\times 3^{4.5}(1+\log(4f_1)) \left(\log(2^{k+1})\right)^3\nonumber\\
& > & -4.77\times 10^{10} (k+1)^3
(1+\log(4f_1)).
\end{eqnarray}
Thus, comparing bounds \eqref{eq:b1} and \eqref{eq:b2}, we get that
$$
k f_1\log 2 < 4.77\times 10^{10} (k+1)^3(1+\log(4f_1)).
$$
Since $k\ge 59$,
\[
f_1 <  \frac{4.77\times 10^{10}}{\log 2}\left(\frac{k+1}{k}\right) (k+1)^2 (1+\log(4f_1))
<  7\times 10^{10} (k+1)^2\log(4f_1).
\]
Here, we used the fact that $\log(4f_1)\ge \log(4F) > 37$, so
 $1+\log(4f_1)<\frac{38}{37}\log(4f_1)$. This gives
$$
4f_1< 2.876 \times 10^{11}(k+1)^2 \log(4f_1).
$$
For $A>10^{12}$, the inequality $x<A\log x$ implies that $x< \frac98 A\log A$ and hence
$$
f_1< 8.1 \times 10^{10} (k+1)^2 \( 26.4 + 2\log(k+1) \).
$$
Since $f_1\log p_1>\log q$, $\log p_1 < (k+1)\log 2$ and $2^{k}\le \log q$, we have
\[
2^k \le \log q <  (\log 2) \times 10^{11} (k+1)^3 (26.6+2\log(k+1)).
\]
This implies $k\le 58$, a contradiction.
\end{proof}


\section{Proof of theorems \ref{e0e1} and \ref{maintwo}}\label{sec:thm12}


Let 
\[ 
E_q(t)=\Psi(t;q,1)-\frac{t}{q-1},{\rm ~where~}\Psi(t;q,1)=\sum_{\substack{n\le x \\ n\equiv 1\pmod{q}}} \Lambda(n).
\]
 Let $R=9.645908801$.  We say that $\beta_0$ is an \emph{exceptional zero}
for a prime $q$ 
if $\beta_0 \ge 1 - 1/(R\log q)$ and $L(\beta_0,\chi)=0$, where $\chi$ 
is the quadratic character
modulo $q$.  Let $B(q)=1$ if $\beta_0$ exists, and $B(q)=0$ otherwise.

\begin{lem}\label{Mc}
 Suppose $q\ge 10000$ is prime.  Then, for $x\ge e^{R\log^2 q}$,
\[
 |E_q(x)| \le \frac{1.012 x^{\beta_0}}{q} B(q) +
\frac89 x \sqrt{\frac{\log x}{R}} \exp\left\{
- \sqrt\frac{\log x}{R} \right\}.
\]
\end{lem}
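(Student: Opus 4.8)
The plan is to start from the explicit formula for $\Psi(x;q,1)$ obtained from orthogonality of characters modulo $q$, namely
\[
E_q(x) = \Psi(x;q,1) - \frac{x}{q-1} = -\frac{1}{\phi(q)}\sum_{\chi \ne \chi_0} \chi(1)\, \psi(x,\chi) + O(\log^2 x),
\]
where $\psi(x,\chi) = \sum_{n\le x}\Lambda(n)\chi(n)$ and the $O(\log^2 x)$ absorbs the prime powers and the contribution of the trivial character's removed pole; for $x\ge e^{R\log^2 q}$ this error is negligible compared with the stated bound. For each nonprincipal $\chi$ one has the truncated explicit formula $\psi(x,\chi) = -\sum_{|\gamma|\le T}\frac{x^\rho}{\rho} + O\!\left(\frac{x\log^2(qx)}{T}\right)$, the sum running over nontrivial zeros $\rho=\beta+i\gamma$ of $L(s,\chi)$. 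Summing over $\chi$, the contribution of any exceptional zero $\beta_0$ (a zero of $L(s,\chi_1)$ for the quadratic character $\chi_1$) is isolated as the main term $\le \frac{1.012\, x^{\beta_0}}{q}B(q)$ — the constant $1.012$ rather than $1$ coming from $\frac{1}{\beta_0}$ and $\frac{1}{\phi(q)} = \frac{1}{q-1}$ being close to but not exactly $\frac1q$, for $q\ge 10000$.

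The heart of the argument is to bound the remaining sum over all other zeros using McCurley's zero-free region (quoted after Theorem \ref{e0e1}): apart from the possible exceptional zero, $\prod_\chi L(s,\chi)$ has no zeros with $\Re s \ge 1 - \frac{1}{R\log\max(q, q|\Im s|, 10)}$. One feeds this region, together with a standard zero-density / zero-counting estimate $N(T,\chi) \ll T\log(qT)$, into the truncated explicit formula and optimizes $T$. The standard computation (as in Chapter 18–19 of Davenport, or the Rosser–Schoenfeld style explicit versions) gives, after choosing $\log T \asymp \sqrt{\log x/R}$, a bound of the shape $x\exp\{-c\sqrt{\log x/R}\}$ for the non-exceptional part; tracking the constants through McCurley's value of $R$ and using $x\ge e^{R\log^2 q}$ (which guarantees $\log x \ge R\log^2 q$, so that the zero-free region is effective uniformly and the polynomial-in-$\log$ factors are dominated) yields the explicit coefficient $\frac89\sqrt{\log x/R}$ and exponent $-\sqrt{\log x/R}$.

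The main obstacle is bookkeeping the explicit constants: one must carry out the contour-integration/explicit-formula argument with fully numerical constants (the $\frac89$ and the precise appearance of $R$ in both the zero-free region and the final exponent), rather than with $O$'s, which requires careful use of explicit bounds for $\sum_{|\gamma|\le T} 1/|\rho|$, for $N(T,\chi)$, and for the horizontal integrals, all uniform in $q$. The condition $x \ge e^{R\log^2 q}$ is exactly what is needed to make every error term of the form (power of $\log(qx)$) $\times$ (main saving) collapse into the clean exponential, and to ensure $\beta_0$, if it exists, satisfies $x^{\beta_0-1}$ being comparable to the claimed size. I would model the write-up on the explicit prime-counting estimates in \cite{RS} and the treatment in \cite{Mc}, invoking McCurley's region as a black box and only redoing the explicit-formula optimization.
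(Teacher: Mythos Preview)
Your outline is sound---separate the exceptional zero, use the zero-free region on the remaining zeros, and optimize---but the technical vehicle you propose differs from the paper's, and the difference matters for the constants.

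You invoke the classical truncated explicit formula $\psi(x,\chi)=-\sum_{|\gamma|\le T}x^\rho/\rho+O(x\log^2(qx)/T)$ and plan to optimize $T$. The paper instead applies McCurley's \emph{smoothed} explicit formula (his Theorem~3.6 with $m=2$, smoothing parameter $\delta=2/H$), which replaces the truncation error $x\log^2(qx)/T$ by a clean $\delta x$ and replaces the tail sum over $|\gamma|>H$ by one with denominator $|\rho(\rho+1)(\rho+2)|$, absolutely convergent. The paper then quotes McCurley's Lemmas~3.5, 3.7, 3.8 essentially verbatim, only inserting the extra exceptional-zero contribution (an additive $\tfrac{1}{14}q^{1/2}\log^2 q$ in Lemma~3.5 and the isolated $x^{\beta_0}/\beta_0$ term in the zero sum). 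With $H$ chosen so that $\log(qH)=X=\sqrt{(\log x)/R}$, each of the seven error pieces $\eps_1,\ldots,\eps_7$ is bounded by an explicit multiple of $Xe^{-X}$, and their sum is what produces the $\tfrac89$.

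So: the paper does not ``redo the explicit-formula optimization'' at all; it lifts McCurley's computation wholesale and patches in $\beta_0$. Your truncated approach would work in principle, but the $\log^2(qx)/T$ error and the need to handle $\sum_{|\gamma|\le T}1/|\rho|$ directly make it harder to land on exactly $\tfrac89\sqrt{(\log x)/R}$ rather than, say, $c(\log x)^a e^{-\sqrt{(\log x)/R}}$ with a larger power of $\log$. If you want the stated constants, switch to the smoothed formula.
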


The proof of Lemma \ref{Mc} comes from estimates in McCurley \cite{Mc},
and will be given later in Section \ref{sec:Mc}.

\begin{proof}[Proof of Theorem \ref{e0e1}]
Propositions \ref{TqL} and \ref{EKsump} imply that
\be\label{e0e1-1}
 (q-1)\frac{e_1(q)}{e_0(q)} = 1 -\gamma - \frac{2\log q}{q^2-1} - S(q) + 
\lim_{x\to\infty} \Biggl[ \frac{\log x}{q-1}-\sum_{\substack{n\le x \\ n\equiv 1\pmod{n}}}
\frac{\Lambda(n)}{n} \Biggr].
\ee
By partial summation, for any $y>2q$ we have
\be\label{e0e1-2}
 \lim_{x\to \infty}\Big( \sum_{\substack{y<n\le x \\ n\equiv 1\pmod{q}}}
\frac{\Lambda(n)}{n} - \frac{\log(x/y)}{q-1} \Big)= -\frac{E_q(y)}{y} +
\int_y^\infty \frac{E_q(t)}{t^2}\, dt.
\ee
By Lemma \ref{Mc},
\be\label{EBH}
\Big| \int_y^\infty \frac{E_q(t)}{t^2}\, dt - \frac{E_q(y)}{y} \Big| \le B(q)
\frac{1.012(2-\beta_0)y^{\beta_0-1}}{(1-\beta_0)q} + 
\frac89 \pfrac{2RW^2+(4R+1)W+4R }{e^W},
\ee
where $W=\sqrt{\frac{\log y}R }$.

Taking $y=\exp(4 R\log^2 q)$ (so that $W=2\log q$), we obtain
\[
  \left|(q-1)\frac{e_1(q)}{e_0(q)} - (1 -\gamma)\right| \ll \frac{\log y}{q} + \frac{B(q)}{1-\beta_0}
+  \sum_{\substack{n \le y \\ n\equiv 1\pmod{q}}} \frac{\Lambda(n)}{n}. 
\]
By Proposition \ref{sumsmallp} and Theorem \ref{thm:S(q)}, the above sum on $n$ is
\[
 \le S(q) + \frac{2\log y + 2(\log q)\log\log(y/q)}{q-1} \ll \frac{\log^2 q}{q}.
\]
The first three parts of Theorem \ref{e0e1} now follow: for the first part, use
Lemma \ref{beta}; for the second part use Siegel's theorem \cite[\S 21]{Da} which states
that for every $\eps>0$, $\beta_0 \ge 1 - 
C(\eps) q^{-\eps}$ for an (ineffective) constant $C(\eps)$; for the third part,
we assume $\beta_0$ doesn't exist.

Finally, on ERH we have $E_q(t) \ll t^{1/2}\log^2 t$, uniformly in $q\le t$ 
\cite[\S 20, (14)]{Da}.  Hence, if $y\ge q$ then
\[
\Big|\int_y^\infty \frac{E_q(t)}{t^2}\, dt - \frac{E_q(y)}{y}\Big| \ll \frac{\log^2 y}{y^{1/2}}.
\]
Taking $y=q^3$ in the above argument 
yields $\EKq=O((\log q)(\log \log q))$ and hence the final estimate in Theorem \ref{e0e1}.
\end{proof}

{\bf Remarks}.
The estimate  $\EKq = O((\log q)\log\log q)$, valid under ERH,
was proved independently by Badzyan \cite{Badz}. Note that a third way to establish
it is by using \cite[Proposition 2]{I}.
 Unconditionally, Ihara
et al. \cite{IKMS} have shown that $\EKq\ll_\eps q^{\epsilon}$
(implicit in the third estimate in Theorem \ref{e0e1}).
In a more recent paper \cite{KM}, Kumar Murty proved that $|\EKq|$ is $O(\log q)$ on average:
$$\sum_{Q/2<q\le Q}|\EKq|\ll (\pi(Q)-\pi(Q/2))\log Q.$$

\begin{proof}[Proof of Theorem \ref{maintwo}]
By \eqref{e0e1-1}--\eqref{EBH} (ignoring the summands in \eqref{e0e1-1} with $n\le y$), 
together with the exceptional zero estimate in Lemma \ref{beta}, 
we have for $q\ge 10000$ the estimate
\[
 (q-1)\frac{e_1(q)}{e_0(q)} \le 1-\gamma + \frac{\log y}{q-1}+ 
1.015 \frac{y^{-D/(q^{1/2}\log^2 q)}\log^2 q}{D q^{1/2}}
+  \frac89 \pfrac{2RW^2+(4R+1)W+4R }{e^W},
\]
where $D=3.125 \max(2\pi,\frac12\log q)$.
When $q\ge 30000$, we take $y=e^{1.44 R \log^2 q}$, so that $W=1.2\log q$ and 
$D\ge 16.1$.  A short calculation reveals that  $e_1(q)/e_0(q) < \frac12$. 

For $q<30000$ we use the results of explicit calculation of $\EKq$
(e.g., Table 1 and Lemma \ref{EKq30000}).  
\end{proof}

\begin{center}
\afterpage{\clearpage}
\begin{table}
\caption{Approximate values of $S(q)$, $\EKq$ and $e_1(q)/e_0(q)$.}
\medskip
\medskip
\begin{tabular}{|r|r|r||r|r||r|}\hline
$q$&$S(q)$&$qS(q)$&$\EKq$\quad&$\EKq/\log q$& $\ds (q-1)\frac{e_1(q)}{e_0(q)}$ \\ \hline\hline
$3$&$0.351646$&$1.054940$&$0.945497$&$0.860628$&$1.247179$\\
$5$&$0.077777$&$0.388887$&$1.720624$&$1.069083$&$0.897187$\\
$7$&$0.122829$&$0.859805$&$2.087594$&$1.072811$&$0.866519$\\
$11$&$0.009100$&$0.100103$&$2.415425$&$1.007310$&$0.657441$\\
$13$&$0.046201$&$0.600623$&$2.610757$&$1.017859$&$0.673826$\\
$17$&$0.004437$&$0.075432$&$3.581976$&$1.264280$&$0.642487$\\
$19$&$0.011009$&$0.209173$&$4.790409$&$1.626934$&$0.692657$\\
$23$&$0.000829$&$0.019080$&$2.611289$&$0.832815$&$0.536910$\\
$29$&$0.000347$&$0.010088$&$3.093731$&$0.918758$&$0.529900$\\
$31$&$0.036585$&$1.134139$&$4.314442$&$1.256394$&$0.599845$\\
$37$&$0.000929$&$0.034387$&$4.304938$&$1.192200$&$0.540802$\\
$41$&$0.000449$&$0.018445$&$3.971521$&$1.069461$&$0.520422$\\
$43$&$0.000218$&$0.009397$&$4.378627$&$1.164157$&$0.525317$\\
$47$&$0.000129$&$0.006083$&$4.799394$&$1.246548$&$0.525580$\\
$53$&$0.000214$&$0.011346$&$4.337736$&$1.092548$&$0.505056$\\
$59$&$0.000065$&$0.003863$&$5.433516$&$1.332548$&$0.515399$\\
$61$&$0.001438$&$0.087727$&$5.071085$&$1.233578$&$0.507672$\\
$67$&$0.000268$&$0.018017$&$5.292139$&$1.258626$&$0.502328$\\
\hline
$71$&$0.000612$&$0.043471$&$5.255258$&$1.232853$&$0.497650$\\
$73$&$0.001374$&$0.100374$&$4.066949$&$0.947905$&$0.479861$\\
$79$&$0.000496$&$0.039250$&$4.998276$&$1.143914$&$0.486679$\\
$83$&$0.000073$&$0.006119$&$3.033136$&$0.686409$&$0.459221$\\
$89$&$0.000349$&$0.031120$&$4.164090$&$0.927696$&$0.469899$\\
$97$&$0.000171$&$0.016587$&$4.891240$&$1.069191$&$0.473429$\\
$101$&$0.000012$&$0.001283$&$5.297012$&$1.147751$&$0.475323$\\
$103$&$0.000032$&$0.003301$&$5.144339$&$1.109954$&$0.472822$\\
$107$&$0.000030$&$0.003234$&$5.458274$&$1.168087$&$0.473907$\\
$109$&$0.000025$&$0.002756$&$6.906638$&$1.472207$&$0.486372$\\
$113$&$0.000024$&$0.002809$&$4.021730$&$0.850729$&$0.458353$\\
$127$&$0.005911$&$0.750763$&$5.088599$&$1.050454$&$0.468785$\\
$131$&$0.000029$&$0.003827$&$2.836826$&$0.581889$&$0.444355$\\
$137$&$0.000034$&$0.004791$&$4.937000$&$1.003459$&$0.458862$\\
$139$&$0.000079$&$0.011060$&$5.889168$&$1.193474$&$0.465287$\\
$149$&$0.000008$&$0.001234$&$5.983424$&$1.195741$&$0.462998$\\
\hline
\end{tabular}
\end{table}
\end{center}

\section{Proof of Lemma \ref{Mc}}\label{sec:Mc}


In \cite{Mc}, McCurley gives estimates for $E_q(x)$ under the assumption that
the exceptional zero $\beta_0$ doesn't exist.   It is simple to modify the
arguments to handle the case when $\beta_0$ does exist.  Define
\[
 L=\log q, \quad X =\sqrt\frac{\log x}{R}, \quad x=e^{\lambda RL^2}, \quad \lambda
=(1+\a)^2, \quad H=q^\a.
\]
In particular,
\be\label{XL}
X = (1+\a)L = \log(qH).
\ee
Also, since $q\ge 10000$, we have $x \ge 10^{355}$.  We take $\eta=\frac12$ in
\cite[Theorem 2.1]{Mc}, which gives
\[
 \Big| N(T,\chi)-\frac{T}{\pi}\log\pfrac{qT}{2\pi e} \Big| \le C_1\log(qT)+C_2,
\]
where $C_1=0.9185$, $c_2=5.512$ and $N(T,\chi)$ is the number of zeros of $L(s,\chi)$
with imaginary part in $[-T,T]$ and real part in $(0,1)$.  
Lemma 3.5 of \cite{Mc} concerns bounds for $\sum_{\chi\ne\chi_0} |b(\chi)|$ (where $b(\chi)$ is
the constant term in the Laurent expansion of $\frac{L'}{L}(s,\chi)$ about $s=0$)
and it is assumed that $\beta_0$ 
doesn't exist.  However, by \cite[(3.16)]{Mc}, the existence of $\beta_0$ 
contributes an extra amount $\le \frac{1}{14}q^{1/2}\log^2 q$ to the sum.
The estimate in this lemma is thus increased by an amount $\le 0.06$ if $\beta_0$
exists.  

We apply \cite[Theorem 3.6]{Mc} with $m=2$ and $\del=2/H \le 0.0002$.  In the notation
of this theorem,
\be\label{A2delta}
A_2(\delta)=\delta^{-2} \( 1 + 2(1+\del)^3+(1+2\del)^3\) \le 4.003 \del^{-2}.
\ee
Denote by $\rho=\beta+i\gamma$ a generic zero of a non-principal $L$-function
with $0<\beta<1$.
Then we have
\be\label{Eqmain}
\frac{q-1}{x}|E_q(x)| < (1+\del) \sum_{\chi\ne\chi_0} \sum_{\rho:|\gamma|\le H}
\frac{x^{\beta-1}}{|\rho|} + \frac{4.003}{\del^2} 
\sum_{\chi\ne\chi_0} \sum_{\rho:|\gamma|> H}\frac{x^{\beta-1}}{|\rho(\rho+1)(\rho+2)|}
+\del+\eps_1,
\ee
where, using the modified Lemma 3.5 of \cite{Mc},
\be\label{eps1}
\eps_1 < \frac{q}{x} \( \frac{\log q \log x}{\log 2}+\frac{q\log q}{4}+
15\log^2 q+56\log q + 12\) < 10^{-300} X e^{-X}.
\ee
To estimate the sums over $\rho$, let
\[
 R(T)=C_1\log (qT)+C_2, \qquad \phi_n(t) = t^{-n-1} 
\exp\left \{-\frac{\log x}{R\log(qt)}\right\}.
\]
By \cite[Lemma 3.7]{Mc}, for each $\chi\ne\chi_0$,
\be\label{sumrho1}
\sum_{\substack{\rho:|\gamma|\le H \\ \rho \ne \beta_0}} 
\frac{x^{\beta-1}}{|\rho|} < \eps_2 +\eps_3 + \eps_4,
\ee
where, by \eqref{XL},
\begin{align*}
 \eps_2 &=\frac{1}{2\sqrt{x}}\(\frac{\lambda L^2}{\pi}+\frac{2+\a}{\pi}L+\frac{R(H)}{H}
+2R(1)+C_1\) + \frac{qL+\a L^2}{x} < 10^{-100} X e^{-X}, \\
\eps_3 &= \phi_0(H) R(H) = \frac{C_1 X + C_2}{H} e^{-X} < 0.00016 X e^{-X}, \\
\eps_4 &= \frac12 \int_1^H \phi_0(t) \log\pfrac{qt}{2\pi}\, dt <
\frac12 \int_1^H \phi_0(t) \log (qt)\, dt \\
&=\frac{\log^2 x}{2R^2}\int_{(1+\a)L}^{(1+\a)^2 L} \frac{e^{-u}}{u^3}\, du <
\frac{\log^2 x}{2R^2(1+\a)^3L^3} \int_{(1+\a)L}^\infty e^{-u}\, du = \frac{Xe^{-X}}{2}.
\end{align*}
Therefore,
\be\label{eps234}
\eps_2+\eps_3+\eps_4 < 0.5002 X e^{-X}.
\ee
For each $\chi\ne\chi_0$,  \cite[Lemma 3.8]{Mc} implies that
\be\label{sumrho2}
 \sum_{\rho:|\gamma|> H}\frac{x^{\beta-1}}{|\rho(\rho+1)(\rho+2)|} < \eps_5+\eps_6+\eps_7,
\ee
where
\begin{align*}
 \eps_5 &= \frac{1}{2H^2\sqrt{x}}\( \frac{H}{2\pi}(1+\a)L+2R(H)+\frac{C_1}{3}\)+ \frac{4L}
{xH^2} < 10^{-100} \frac{Xe^{-X}}{H^2}, \\
\eps_7 &= R(H) \phi_2(H) = \frac{C_1 X + C_2}{H^3}e^{-X} < 0.00016\frac{Xe^{-X}}{H^2}, \\
\eps_6 &= \frac12\int_H^\infty C_1 \phi_3(t)+\phi_2(t)\log\pfrac{qt}{2\pi}\, dt <
\frac12 \int_H^\infty \phi_2(t) \log (qt)\, dt \\
&= \frac{q^2 \lam L^2}{4} \int_{\sqrt{2}}^\infty u e^{-\frac{X}{\sqrt{2}}(u+\frac1{u})}\, du
= \frac{q^2 \lam L^2}{2\pi} K_2(2\sqrt{2} X, \sqrt{2}),
\end{align*}
where $K_2$ is the incomplete Bessel function.  By \cite[Lemmas 4 and 5]{RS75},
\[
 K_2(z,x) \le \(x + \frac{2}{z}\) \pfrac{x^2}{z(x^2-1)} e^{-\frac{z}{2}(x+1/x)} \qquad
(x>1,z>0),
\]
hence
\[
 \eps_6 \le \frac{q^2}{2\pi} \( X + \frac{1}{2} \) e^{-3X} = \frac{X}{2\pi} \(1+ \frac{1}{2X}\) 
\frac{e^{-X}}{H^2} \le \frac{0.1678 Xe^{-X}}{H^2}.
\]
Therefore,
\be\label{eps567}
\eps_5+\eps_6+\eps_7 < \frac{0.168 X e^{-X}}{H^2}.
\ee
By \eqref{A2delta}, 
\[
 \frac{\del}{q-1} \le \frac{2.0003}{qH} = 2.0003 e^{-X} < \frac{2.0003}{L} Xe^{-X}.
\]
Combining this with estimates \eqref{Eqmain}, \eqref{eps1}, \eqref{sumrho1}, \eqref{eps234},
 \eqref{sumrho2} and \eqref{eps567}, we conclude that
\begin{align*}
 |E_q(x)| &< B(q)\frac{(1+\del)x^{\beta_0}}{(q-1)\beta_0} + Xe^{-X} x \Bigg[(1+\del)(0.5002)
+10^{-300}+0.168 \frac{A_2(\del)}{H^2} \Bigg] + \frac{\del x}{q-1} \\
&< B(q) \frac{1.012 x^{\beta_0}}{q} + \frac89 x X e^{-X}. \qed
\end{align*}


\noindent {\tt Acknowledgement}. K.~F. was supported by National Science Foundation
grants DMS-0555367 and DMS-0901339. F.~L. worked on this project during a visit to CWI in Amsterdam in October 2010 as a Beeger lecturer. Both F.  L. and P.~M. thank H. te Riele for his hospitality at CWI when part of this work was done.  During the
preparation of this paper, F. L. was also supported in part by Grants
SEP-CONACyT 79685 and PAPIIT 100508. P.~M. likes to also thank Y. Hashimoto and Y. Ihara for kindly sending him
\cite{veel}, respectively \cite{I2}, and helpful e-mail
correspondence.  V. Kumar Murty pointed out the relevance of \cite{IKMS} and \cite{KM}. MPIM-intern
Philipp Weiss computed $i_0$ and $i_1$.
The authors thank C. Pomerance for bringing to their attention
Erd\H os' paper \cite{Erd}, thank A. Granville for pointing out his paper \cite{Gr} and thank 
W. Bosma and  P. Pollack for helpful conversations.

\vfil\eject

{\small
}

\medskip\noindent Kevin Ford\par\noindent
{\footnotesize {Department of Mathematics}, {University of Illinois at Urbana-Champaign} \hfil\break
 {1409 West Green Street, Urbana, IL 61801, USA}\hfil\break
e-mail: {\tt ford@math.uiuc.edu} }

\medskip\noindent Florian Luca \par\noindent
{\footnotesize {Centro de Ciencias Matem{\'a}ticas},{ Universidad Nacional Autonoma de M{\'e}xico} \hfil\break
{C.P. 58089, Morelia, Michoac{\'a}n, M{\'e}xico} \hfil\break
e-mail: {\tt fluca@matmor.unam.mx}}

\medskip\noindent Pieter Moree \par\noindent
{\footnotesize Max-Planck-Institut f\"ur Mathematik,
Vivatsgasse 7, D-53111 Bonn, Germany.\hfil\break
e-mail: {\tt moree@mpim-bonn.mpg.de}}


\begin{thebibliography}{99}

\bibitem{AB} G.E.~Andrews and B.C.~Berndt, {\it Ramanujan's Lost Notebook}, Part III, Springer, New York, 2012.

\bibitem{Badz} A.I.~Badzyan,
The Euler--Kronecker constant, Mat. Zametki {\bf 87} (2010), 45--57.
English Translation in Math. Notes {\bf 87} (2010), 31--42.

\bibitem{BO} B.C.~Berndt and K.~Ono, Ramanujan's unpublished manuscript on the partition and tau 
functions with proofs and commentary,
The Andrews Festschrift (Maratea, 1998),  {\it S\'em. Lothar. Combin.}  {\bf 42}  (1999), Art. B42c, 63 pp. (electronic). 

\bibitem{BLM} Y.~Bugeaud, M.~Mignotte and S.~Siksek, Classical and modular approaches
to exponential Diophantine equations
I. Fibonacci and Lucas perfect powers, {\it Ann. Math. (2)\/} {\bf 163} (2006), 969--1018.

\bibitem{Da} H.~Davenport, {\it Multiplicative number theory}, 3rd ed.,
Graduate Texts in Mathematics vol. 74, Springer-Verlag, New York, 2000.

\bibitem{Del} H.~Delange,
Sur des formules de Atle Selberg, {\it Acta Arith.} {\bf 19} (1971),
105--146. 

\bibitem{Deninger} C.~Deninger, On the analogue of the formula of Chowla and Selberg for real quadratic fields, 
{\it J. Reine Angew. Math.} {\bf 351} (1984), 171--191.

\bibitem{Di} L.E.~Dickson, A new extension of Dirichlet's theorem on
prime numbers, {\it Messenger of Math.\/}, {\bf 33} (1904), 155--161.

\bibitem{EH} P.D.T.A.~Elliott and H. Halberstam, {\it A conjecture in prime
number theory}, Symp. Math. {\bf 4} (1968--69), 59--71.

\bibitem{Erd} P.~Erd\H os, On a problem of S. Golomb\footnote{The title of the
published paper has ``G. Golomb'', a misprint}, 
{\it J. Austral. Math. Soc.} {\bf 2} (1961/1962), 1--8. 

\bibitem{fftw}  FFTW Fast Fourier Transform C Library, available at
\texttt{http://www.fftw.org/}.

\bibitem{FG}
G.~Fee and A.~Granville, {\it
The prime factors of Wendt's binomial circulant determinant},
Math. Comp. {\bf 57} (1991), 839--848. 

\bibitem{Forbes} T.~Forbes, {\it Prime Clusters and Cunningham Chains},
 Math. Comput. {\bf 68} (1999), 1739--1748.

\bibitem{Forbesweb}  T.~Forbes, {\it Prime $k$-tuples},
\texttt{http://anthony.d.forbes.googlepages.com/ktuplets.htm}

\bibitem{Gr} A.~Granville, {\it On the size of the first factor of the class number
of a cyclotomic field}, Inv. Math. {\bf 100} (1990), 321--338.

\bibitem{Hxyz} K.~Ford, The distribution of integers with a divisor in a given interval,
{\it Ann. Math.} {\bf 168} (2008), 367--433.

\bibitem{FT} A.~Fr\"ohlich and M.~Taylor, {\it Algebraic number theory}, Cambridge Studies in Advanced Mathematics {\bf 27}, Cambridge University Press, Cambridge, 1993. 
 
\bibitem{HR}  H.~Halberstam and H.-E.~Richert, {\it Sieve Methods\/}, Academic
Press, London, 1974.  Reprinted by Dover Publications, 2011.

\bibitem{HL} G.H.~Hardy and J.E.~Littlewood, {\it Some problems of 
â``Partitio Numerorumâ": III On the expression
of a number as a sum of primes}, Acta Math. {\bf 44} (1922), 1--70.

\bibitem{veel}  Y.~Hashimoto, Y.~Iijima, N.~Kurokawa and M.~Wakayama, Euler's constants 
for the Selberg and the Dedekind zeta functions, {\it Bull. Belg. Math. Soc. Simon Stevin} 
{\bf 11} (2004), 493--516.

\bibitem{HenRich} D.~Hensley and I.~Richards, {\it On the incompatibility of two conjectures
concerning prime numbers},  Proc. Symp. Pure Math. (Analytic Number Theory, St. Louis, 1972)
{\bf 24}, 123--127.

\bibitem{I} Y.~Ihara, On the Euler-Kronecker constants of global fields and primes
with small norms, in V.~Ginzburg, ed., {\it Algebraic
Geometry and Number Theory: In Honor of Vladimir Drinfeld's 50th Birthday}, Progress in Mathematics, Vol. 850, 
Birkh\"auser Boston, Cambridge, MA, 2006, 407--451.

\bibitem{I2} Y.~Ihara, The Euler-Kronecker invariants in various families of global 
fields, {\it Proc. of AGCT 2005} (Arithmetic Geometry and Coding Theory
10), Ed. F. Rodier et al., S\'eminaires et Congr\`es {\bf 21} (2009), 79--102.

\bibitem{IKMS} Y.~Ihara, V.~Kumar Murty and M.~Shimura, On the logarithmic derivatives of 
Dirichlet $L$-functions at $s=1$, {\it Acta Arith.} {\bf 137} (2009), 253--276. 


\bibitem{L} E.~Landau, \"Uber die Einteilung der positiven ganzen Zahlen in vier Klassen
nach der Mindestzahl der zu ihrer additiven Zusammensetzung erforderlichen Quadrate,
{\it Arch. der Math. und Phys. (3)} {\bf 13} (1908), 305--312. (See also his Collected
Papers.)

\bibitem{La09a} E.~Landau, {\it L\"osung des Lehmer'schen Problems},
Amer. J. Math. {\bf 31} (1909), 86--102.

\bibitem{La09b} E.~Landau, {\it Handbuch der Lehre von der Verteilung
der Primzahlen}, 3rd ed.,  Chelsea, New York, 1953.


\bibitem{LZ} A.~Languasco and A.~Zaccagnini, A note on Mertens' formula for arithmetic 
progressions, {\it J. Number Theory}  {\bf 127}  (2007), 37--46.

\bibitem{LZ2} A.~Languasco and A.~Zaccagnini, On the constant in the Mertens product for arithmetic progressions. I. Identities. 
{\it Funct. Approx. Comment. Math.} {\bf 42} (2010), part 1, 17--27. 

\bibitem{LZ3} A.~Languasco and A.~Zaccagnini,
On the constant in the Mertens product for arithmetic progressions. II. Numerical values, 
{\it Math. Comp.}  {\bf 78}  (2009), 315--326.

\bibitem{LZ4} A.~Languasco and A.~Zaccagnini, Computing the Mertens and Meissel-Mertens constants for sums over 
arithmetic progressions, (with an appendix by K.~K. Norton), {\it Experiment. Math.}  {\bf 19}  (2010), 
279--284.

\bibitem{Leba} P.~Lebacque,  Mertens and Brauer-Siegel theorems,
{\it Acta Arith.} {\bf 130} (2007), 333--350.

\bibitem{Mat} E.~M.~Matveev, An explicit lower bound for a homogeneous rational linear form in logarithms of algebraic numbers. II. (Russian)
{\it Izv. Russ. Akad. Nauk Ser. Mat.\/} {\bf 64}  (2000),  125--180;  translation in {\it Izv. Math.\/} {\bf  64}  (2000),  1217--1269.

\bibitem{Mc} K.~McCurley, Explicit estimates for the error term in the prime
number theorem for arithmetic progressions, {\it Math. Comp.} {\bf 42} (1984),
265--285.

\bibitem{MV73} H.L.~Montgomery and R.C.~Vaughan, The large sieve,
{\it Mathematika} {\bf 20} (1973), 119--134.

\bibitem{MV} H.L.~Montgomery and R.C.~Vaughan,
{\it Multiplicative Number Theory I. Classical Theory}, Cambridge studies in
advanced mathematics vol. 97, Cambridge University Press, 2007.


\bibitem{M} P.~Moree, On some claims in Ramanujan's `unpublished' manuscript on the partition and 
tau functions, {\it Ramanujan J.} {\bf 8} (2004), 317--330.


\bibitem{M2} P.~Moree, Chebyshev's bias for composite numbers with restricted prime divisors, 
{\it Math. Comp.}  {\bf 73}  (2004),  425--449.

\bibitem{eerstev} P.~Moree, Values of the Euler phi function not divisible by a prescribed odd prime,
math.NT/0611509, 2006, unpublished preprint. 

\bibitem{India} P.~Moree, Counting numbers in multiplicative sets: Landau versus Ramanujan,
{\it Mathematics Newsletter} {\bf 21}, no. 3 (2011), 73--81 (arXiv:1110.0708).

\bibitem{KM} V.~Kumar Murty, The Euler-Kronecker constant of a number field, 
{\it Ann. Sci.  Math. Qu\'ebec} {\bf 35}  (2011),  239–-247.


\bibitem{N} W.~Narkiewicz, {\it Elementary and analytic theory of algebraic numbers}. Second edition. 
Springer-Verlag, Berlin; PWN---Polish Scientific Publishers, Warsaw, 1990.

\bibitem{OEIS}  OEIS Foundation (2011),
The On-Line Encyclopedia of Integer Sequences, http://oeis.org/. 

\bibitem{R} M.~Rosen, A generalization of Mertens' theorem, {\it J. Ramanujan Math. Soc.} 
{\bf 14} (1999), 1--19. 

\bibitem{RS} J.B.~Rosser and L.~Schoenfeld, Approximate formulas for some functions of prime 
numbers, {\it Illinois J. Math.}  {\bf 6}  (1962), 64--94. 

\bibitem{RS75} J.B.~Rosser and L.~Schoenfeld, Sharper bounds for the Chebyshev functions $\theta (x)$ and
$\psi (x)$, {\it Math. Comp.}  {\bf 29}  (1975), 243--269. 


\bibitem{Sh} D.~Shanks, The second-order term in the asymptotic expansion of $B(x)$, 
{\it Math. Comp.} {\bf 18} (1964), 75--86.

\bibitem{SW} B.K.~Spearman and K.S.~Williams, Values of the Euler phi function
not divisible by a given odd prime, {\it Ark. Math.} {\bf 44} (2006), 166--181.

\bibitem{Ten} G.~Tenenbaum,   {\it Introduction to Analytic and
Probabilistic Number Theory}, Cambridge Studies in Advanced Mathematics
{\bf 46}, Cambridge University Press, Cambridge, 1995.


\bibitem{T} M.A.~Tsfasman, Asymptotic behaviour of the Euler-Kronecker constant, in V.~Ginzburg, ed., {\it Algebraic
Geometry and Number Theory: In Honor of Vladimir Drinfeld's 50th Birthday}, Progress in Mathematics, Vol. 850, 
Birkh\"auser Boston, Cambridge, MA, 2006, 453--458.

\bibitem{V} C.J.~de la Vall\'ee-Poussin, Recherches analytiques sur la th\'eorie des nombres premiers
I, {\it Ann. Soc. Sci. Bruxelles} {\bf 20} (1896), 183--256.

\bibitem{W} L.C.~Washington, {\it Introduction to cyclotomic fields}, Graduate Texts in Mathematics {\bf 83}, Springer-Verlag, 
New York, 1982.




\end{thebibliography}
\end{document}